\newtheorem{theorem}{Theorem}
\newtheorem{corollary}[theorem]{Corollary}
\newtheorem{definition}[theorem]{Definition}
\newtheorem{lemma}[theorem]{Lemma}
\newtheorem{remark}[theorem]{Remark}
\newenvironment{proof}[1][Proof]{\noindent\textbf{#1.} }{\ \rule{0.5em}{0.5em}}
\numberwithin{equation}{section}
\numberwithin{theorem}{section}
\begin{document}

\title{Transformation formulas of a character analogue of $\log\theta_{2}\left(
z\right)  $}
\author{Merve \c{C}elebi Bozta\c{s}\thanks{\textit{mcelebi89@yandex.com.tr}} \ and
M\"{u}m\"{u}n Can\thanks{\textit{mcan@akdeniz.edu.tr}}\\\textit{Department of Mathematics, Akdeniz University,}\\\textit{Antalya, 07058, Turkey} }
\date{}
\maketitle

\begin{abstract}
In this paper, transformation formulas for the function
\[
A_{1}\left(  z,s:\chi\right)  =\sum\limits_{n=1}^{\infty}\sum\limits_{m=1}%
^{\infty}\chi\left(  n\right)  \chi\left(  m\right)  \left(  -1\right)
^{m}n^{s-1}e^{2\pi imnz/k}%
\]
are obtained. Sums that appear in transformation formulas are generalizations
of the Hardy--Berndt sums $s_{j}(d,c),$ $j=1,2,5$. As applications of these
transformation formulas, reciprocity formulas for these sums are derived and
several series relations are presented.

\textbf{Keywords:} Dedekind sums; Hardy-Berndt sums; Bernoulli and Euler polynomials.

\textbf{Mathematics Subject Classification 2010:} 11F20, 11B68.

\end{abstract}

\section{Introduction}

Hardy sums or Berndt's arithmetic sums are defined for $c>0$ by
\[%
\begin{tabular}
[c]{ll}%
$S(d,c)={\sum\limits_{n=1}^{c-1}}\left(  -1\right)  ^{n+1+\left[  dn/c\right]
},$ & $s_{1}(d,c)={\sum\limits_{n=1}^{c-1}}\left(  -1\right)  ^{\left[
dn/c\right]  }\mathfrak{B}_{1}\left(  \dfrac{n}{c}\right)  ,$ \vspace{0.2cm}\\
$s_{2}(d,c)={\sum\limits_{n=1}^{c-1}}\left(  -1\right)  ^{n}\mathfrak{B}%
_{1}\left(  \dfrac{n}{c}\right)  \mathfrak{B}_{1}\left(  \dfrac{dn}{c}\right)
,$ & $s_{3}(d,c)={\sum\limits_{n=1}^{c-1}}\left(  -1\right)  ^{n}%
\mathfrak{B}_{1}\left(  \dfrac{dn}{c}\right)  ,$ \vspace{0.2cm}\\
$s_{4}(d,c)={\sum\limits_{n=1}^{c-1}}\left(  -1\right)  ^{\left[  dn/c\right]
},$ & $s_{5}(d,c)={\sum\limits_{n=1}^{c-1}}\left(  -1\right)  ^{n+\left[
dn/c\right]  }\mathfrak{B}_{1}\left(  \dfrac{n}{c}\right)  ,$%
\end{tabular}
\ \ \ \
\]
where $\mathfrak{B}_{p}\left(  x\right)  $ are the Bernoulli functions (see
Section 2) and $[x]$ denotes the greatest integer not exceeding $x$. These
sums arise in transformation formulas for the logarithms of the classical
theta functions \cite{b6,g}. In particular, Hardy--Berndt sums $s_{j}(d,c)$
$\left(  j=1,2,5\right)  $ appear in transformation formulas of $\log
\theta_{2}(z):$\ \

Let $Tz=\left(  az+b\right)  /\left(  cz+d\right)  $\ where $a,$\ $b,$%
\ $c$\ and $d$\ are integers with $ad-bc=1$\ and $c>0.$\ Berndt \cite{b6}
proves that if $d$\ is even, then%
\begin{equation}
\log\theta_{2}\left(  Tz\right)  =\log\theta_{4}\left(  z\right)  +\frac{1}%
{2}\log\left(  cz+d\right)  -\frac{\pi ia}{4c}-\frac{\pi i}{4}+\frac{\pi i}%
{2}s_{1}(d,c), \label{6d}%
\end{equation}
if $c$\ is even, then
\begin{equation}
\log\theta_{2}\left(  Tz\right)  =\log\theta_{2}\left(  z\right)  +\frac{1}%
{2}\log\left(  cz+d\right)  +\pi i\frac{a+d}{4c}-\frac{\pi i}{4}-\pi
is_{2}(d,c), \label{6e}%
\end{equation}
and Goldberg \cite{g} shows that if $c$ and $d$\ are odd, then%
\begin{equation}
\log\theta_{2}\left(  Tz\right)  =\log\theta_{3}\left(  z\right)  +\frac{1}%
{2}\log\left(  cz+d\right)  -\frac{\pi ia}{4c}-\frac{\pi i}{4}+\frac{\pi i}%
{2}s_{5}(d,c). \label{6f}%
\end{equation}
Moreover, Goldberg \cite{g} shows that these sums also arise in the theory of
$r_{m}(n)$, the number of representations of $n$ as a sum of $m$ integral
squares and in the study of the Fourier coefficients of the reciprocals of
$\theta_{j}(z),$ $j=2,3,4$. Analogous to Dedekind sums, these sums also
satisfy reciprocity formulas: For coprime positive integers $d$ and $c$ we
have \cite{b6,g}%
\begin{align}
s_{1}(d,c)-2s_{2}(c,d)  &  =\frac{1}{2}-\frac{1}{2}\left(  \frac{1}{dc}%
+\frac{c}{d}\right)  ,\text{ \ if }d\text{ is even,}\label{8}\\
s_{5}(d,c)+s_{5}(c,d)  &  =\frac{1}{2}-\frac{1}{2cd},\text{ \ if }c\text{ and
}d\text{ are odd.} \label{9}%
\end{align}
Various properties of Hardy--Berndt sums have been investigated
(\cite{a3,b6,bg,c1,g,m1,m2,m4,pz,pt,sy,s,wh,zw,zz}) and several
generalizations have been studied (\cite{cck1,ck,cd,dc,dc1,lz,lg,m3}).

A character analogue of classical Dedekind sum, called as Dedekind character
sum, appears in the transformation formula of\ a generalized Eisenstein series
$G\left(  z,s:\chi:r_{1},r_{2}\right)  $ (see (\ref{7a}) below) associated to
a non-principle primitive character $\chi$ of modulus $k$ \cite[p. 12]{b2}.
This sum is defined by%
\[
s\left(  d,c:\chi\right)  =\sum\limits_{n=1}^{ck}\chi\left(  n\right)
\mathfrak{B}_{1,\chi}\left(  \frac{dn}{c}\right)  \mathfrak{B}_{1}\left(
\frac{n}{ck}\right)
\]
and possesses the reciprocity formula \cite[Theorem 4]{b2}%
\[
s\left(  c,d:\chi\right)  +s\left(  d,c:\bar{\chi}\right)  =B_{1,\chi
}B_{1,\bar{\chi}},
\]
whenever $d$ and $c$ are coprime positive integers, and either $c$ or
$d\equiv0\left(  \operatorname{mod}k\right)  $. Here $\mathfrak{B}_{p,\chi
}\left(  x\right)  $ are the generalized Bernoulli functions (see (\ref{3})
below) and $B_{p,\chi}=\mathfrak{B}_{p,\chi}\left(  0\right)  $. The sum
$s\left(  d,c:\chi\right)  $ is generalized, in the sense of Apostol, by%
\[
s_{p}\left(  d,c:\chi\right)  =\sum\limits_{n=1}^{ck}\chi\left(  n\right)
\mathfrak{B}_{p,\chi}\left(  \frac{dn}{c}\right)  \mathfrak{B}_{1}\left(
\frac{n}{ck}\right)
\]
and corresponding reciprocity formula is established \cite{cck}.

Generalizations of Hardy--Berndt sums $S(d,c),$ $s_{3}(d,c)$ and $s_{4}(d,c)$,
in the sense of $s_{p}\left(  d,c:\chi\right)  ,$ are presented in \cite{ck}
by obtaining transformation formulas for the function
\begin{equation}
B\left(  z,s:\chi\right)  =\sum\limits_{n=0}^{\infty}\sum\limits_{m=1}%
^{\infty}\chi(m)\chi(2n+1)\left(  2n+1\right)  ^{s-1}e^{\pi im(2n+1)z/k},
\label{B1}%
\end{equation}
which is a character extension of $\log\theta_{4}\left(  z\right)  .$

Inspiring by \cite{b2,ck} and the fact\textbf{\ }%
\[
\log\left(  \frac{\theta_{2}\left(  z\right)  }{2e^{\pi iz/4}}\right)
=-\sum\limits_{n=1}^{\infty}\sum\limits_{m=1}^{\infty}\left(  -1\right)
^{m}n^{-1}e^{2\pi imnz}%
\]
we set the function $A_{1}\left(  z,s:\chi\right)  $\ to be
\[
A_{1}\left(  z,s:\chi\right)  =\sum\limits_{n=1}^{\infty}\sum\limits_{m=1}%
^{\infty}\left(  -1\right)  ^{m}\chi\left(  m\right)  \chi\left(  n\right)
n^{s-1}e^{2\pi imnz/k},
\]
for $\operatorname{Im}\left(  z\right)  >0$\ and for all $s.$

In this paper, we derive transformation formulas for the function
$A_{1}\left(  z,s:\chi\right)  $. Sums appearing in transformation formulas
are generalizations, involving characters and generalized Bernoulli and Euler
functions, of Hardy--Berndt sums $s_{1}(d,c),$\ $s_{2}(d,c)$\ and
$s_{5}(d,c).$\ These new sums still obey reciprocity formulas.

\section{Preliminaries}

Throughout this paper $\chi$ denotes a non-principal primitive character of
modulus $k$. The letter $p$ always denotes positive integer. We use the
modular transformation $\left(  az+b\right)  /\left(  cz+d\right)  $ where
$a,$ $b,$ $c$ and $d$ are integers with $ad-bc=1$ and $c>0$. The upper
half-plane $\left\{  x+iy:y>0\right\}  $ will be denoted by $\mathbb{H}$ and
the upper quarter-plane $\left\{  x+iy:x>-d/c\text{, }y>0\right\}  $ by
$\mathbb{K}$. We use the notation $\left\{  x\right\}  $ for the fractional
part of $x.$ Unless otherwise stated, we assume that the branch of the
argument is defined by $-\pi\leq$ arg $z<\pi$.

The Bernoulli polynomials $B_{n}(x)$ and the Euler polynomials $E_{n}(x)$ are
defined by means of the generating functions
\[
\frac{te^{xt}}{e^{t}-1}=\sum\limits_{n=0}^{\infty}B_{n}(x)\frac{t^{n}}%
{n!},\ |t|<2\pi,\text{ and }\frac{2e^{xt}}{e^{t}+1}=\sum\limits_{n=0}^{\infty
}E_{n}(x)\frac{t^{n}}{n!},\ |t|<\pi,
\]
respectively (see \cite{j}). $B_{n}(0)=B_{n}$ are the Bernoulli numbers with
$B_{0}=1,$ $B_{1}=-1/2$ and $B_{2n-1}\left(  1/2\right)  =B_{2n+1}=0$ for
$n\geq1.$ For $0\leq x<1$ and $m\in\mathbb{Z}$, the Bernoulli functions
$\mathfrak{B}_{n}\left(  x\right)  $ are defined by
\[
\mathfrak{B}\left(  x+m\right)  =B_{n}\left(  x\right)  \text{ when }%
n\not =1\text{ or }x\not =0,\text{ and }\mathfrak{B}_{1}\left(  m\right)
=\mathfrak{B}_{1}\left(  0\right)  =0
\]
and satisfy Raabe theorem for any $x$
\begin{equation}
\sum\limits_{j=0}^{r-1}\mathfrak{B}_{n}\left(  x+\frac{j}{r}\right)
=r^{1-n}\mathfrak{B}_{n}\left(  rx\right)  . \label{1}%
\end{equation}
Also we have \cite[Eq. (4.5)]{cl}%
\begin{equation}
r^{n-1}\sum\limits_{j=0}^{r-1}\left(  -1\right)  ^{j}\mathfrak{B}_{n}\left(
\frac{x+j}{r}\right)  =-\frac{n}{2}\mathcal{E}_{n-1}\left(  x\right)
\label{11}%
\end{equation}
for even $r$ and any $x.$ Here $\mathcal{E}_{n}\left(  x\right)  $\ are the
Euler functions defined by
\begin{equation}
\mathcal{E}_{n}\left(  x\right)  =E_{n}\left(  x\right)  \text{ and
}\mathcal{E}_{n}\left(  x+m\right)  =\left(  -1\right)  ^{m}\mathcal{E}%
_{n}\left(  x\right)  \label{e1}%
\end{equation}
for $0\leq x<1$ and $m\in\mathbb{Z}$. The generalized Bernoulli function
$\mathfrak{B}_{m,\chi}\left(  x\right)  $ are defined by Berndt \cite{b5}.\ We
will often use the following property that can confer as a definition
\begin{equation}
\mathfrak{B}_{m,\chi}\left(  x\right)  =k^{m-1}\sum_{j=0}^{k-1}\bar{\chi
}\left(  j\right)  \mathfrak{B}_{m}\left(  \frac{j+x}{k}\right)  ,\ m\geq1,
\label{3}%
\end{equation}
and satisfy%
\begin{equation}
\mathfrak{B}_{m,\chi}\left(  x+nk\right)  =\mathfrak{B}_{m,\chi}\left(
x\right)  ,\text{ }\mathfrak{B}_{m,\chi}\left(  -x\right)  =\left(  -1\right)
^{m}\chi\left(  -1\right)  \mathfrak{B}_{m,\chi}\left(  x\right)  . \label{b}%
\end{equation}

For the convenience with the definition of $\mathfrak{B}_{m,\chi}\left(
x\right)  ,$ let the character Euler function $\mathcal{E}_{m,\chi}\left(
x\right)  $ be defined by
\begin{equation}
\mathcal{E}_{m,\chi}\left(  x\right)  =k^{m}\sum_{j=0}^{k-1}\left(  -1\right)
^{j}\bar{\chi}\left(  j\right)  \mathcal{E}_{m}\left(  \frac{j+x}{k}\right)
,\ m\geq0 \label{13a}%
\end{equation}
for odd $k,$ the modulus of $\chi.$ It is easily seen that
\begin{equation}
\mathcal{E}_{m,\chi}\left(  x+nk\right)  =\left(  -1\right)  ^{n}%
\mathcal{E}_{m,\chi}\left(  x\right)  ,\text{ }\mathcal{E}_{m,\chi}\left(
-x\right)  =\left(  -1\right)  ^{m-1}\chi\left(  -1\right)  \mathcal{E}%
_{m,\chi}\left(  x\right)  . \label{e}%
\end{equation}

The Gauss sum $G\left(  z,\chi\right)  $ is defined by%
\[
G\left(  z,\chi\right)  =\sum_{v=0}^{k-1}\chi\left(  v\right)  e^{2\pi
ivz/k}.
\]
We put $G\left(  1,\chi\right)  =G\left(  \chi\right)  .$ If $n$ is an
integer, then \cite[p. 168]{a2}%
\[
G\left(  n,\chi\right)  =\bar{\chi}\left(  n\right)  G\left(  \chi\right)  .
\]

Let $r_{1}$ and $r_{2}$ be arbitrary real numbers. For $z\in\mathbb{H}$ and
$Re\left(  s\right)  >2,$ Berndt \cite{b2} defines the function%
\begin{equation}
G\left(  z,s:\chi:r_{1},r_{2}\right)  =\sum\limits_{m,n=-\infty}^{\infty
}\ \hspace{-0.19in}^{^{\prime}}\ \frac{\chi(m)\bar{\chi}(n)}{\left(  \left(
m+r_{1}\right)  z+n+r_{2}\right)  ^{s}}, \label{7a}%
\end{equation}
where the dash means that the possible pair $m=-r_{1},$ $n=-r_{2}$ is omitted
from the summation. In accordance with the subject of this study we present
Berndt's formulas for $r_{1}=r_{2}=0.$ Set $G\left(  z,s:\chi\right)
=G\left(  z,s:\chi:0,0\right)  $ and
\[
A\left(  z,s:\chi\right)  =\sum\limits_{m=1}^{\infty}\chi(m)\sum
\limits_{n=1}^{\infty}\chi(n)n^{s-1}e^{2\pi inmz/k},\text{ }z\in
\mathbb{H}\text{ and }s\in\mathbb{C}.
\]
Then, it is shown that
\[
\Gamma\left(  s\right)  G\left(  z,s:\chi\right)  =G(\bar{\chi})\left(
-\frac{2\pi i}{k}\right)  ^{s}H\left(  z,s:\chi\right)
\]
where $H\left(  z,s:\chi\right)  =\left(  1+e^{\pi is}\right)  A\left(
z,s:\chi\right)  .$

The following lemma is due to Lewittes \cite[Lemma 1]{lew}.

\begin{lemma}
\label{le1} Let $A,$ $B,$ $C$ and $D$ be real with $A$ and $B$ not both zero
and $C>0$. Then for $z\in\mathbb{H},$%
\[
arg\left(  \left(  Az+B\right)  /\left(  Cz+D\right)  \right)  =arg\left(
Az+B\right)  -arg\left(  Cz+D\right)  +2\pi l,
\]
where $l$ is independent of $z$ and $l=\left\{
\begin{array}
[c]{ll}%
1, & A\leq0\text{ and }AD-BC>0,\\
0, & \text{otherwise.}%
\end{array}
\right.  $
\end{lemma}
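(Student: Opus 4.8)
The statement in question is Lewittes's Lemma (Lemma~\ref{le1}), so the task is to prove the branch-counting identity for $\arg\bigl((Az+B)/(Cz+D)\bigr)$. The plan is to reduce everything to the behaviour of the argument of a complex number under multiplication by $(Cz+D)^{-1}$, and then to track how the argument wraps around as $z$ ranges over $\mathbb{H}$.

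First I would fix notation: for a nonzero complex number $w$ write $\arg w$ for the principal value in $[-\pi,\pi)$, and recall that for any nonzero $w_1,w_2$ there is an integer $n(w_1,w_2)\in\{-1,0,1\}$ with $\arg(w_1 w_2)=\arg w_1+\arg w_2+2\pi n(w_1,w_2)$. Applying this with $w_1=Az+B$ and $w_2=(Cz+D)^{-1}$, and noting $\arg\bigl((Cz+D)^{-1}\bigr)=-\arg(Cz+D)$ whenever $Cz+D\notin(-\infty,0]$ (which holds here because $C>0$ and $z\in\mathbb{H}$ force $\operatorname{Im}(Cz+D)=C\operatorname{Im} z>0$), we get
\[
\arg\!\left(\frac{Az+B}{Cz+D}\right)=\arg(Az+B)-\arg(Cz+D)+2\pi l(z),
\]
where $l(z)$ is an integer-valued function of $z$. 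The real content of the lemma is that $l(z)$ is \emph{constant} on $\mathbb{H}$ and equals the stated value.

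For constancy: the three quantities $Az+B$, $Cz+D$, and $(Az+B)/(Cz+D)$ are all nonzero and vary continuously (indeed holomorphically) as $z$ moves in the connected set $\mathbb{H}$ --- here I use that $A,B$ are not both zero so $Az+B$ vanishes for at most one real $z$, hence never on $\mathbb{H}$, and that $AD-BC\neq0$ is not needed for nonvanishing but the map is a genuine Möbius map when $AD-BC\neq0$; if $AD-BC=0$ then $(Az+B)/(Cz+D)$ is a real constant, its argument is $0$ or $\pi$, and one checks the formula directly, so assume $AD-BC\neq0$. The key point is that the only way the principal arguments can jump is across the branch cut $(-\infty,0]$; $\arg(Cz+D)$ never jumps since $Cz+D$ stays in the open upper half-plane; $\arg(Az+B)$ jumps only if $Az+B$ crosses the negative real axis, and when it does, $\arg(Az+B)$ drops by $2\pi$ while simultaneously $(Az+B)/(Cz+D)$ also crosses into... --- more cleanly, $l(z)$ is a continuous integer-valued function on the connected set $\mathbb{H}$, hence constant, once we know each of the three arguments is continuous except for compensating jumps; the cleanest route is to observe that $\arg w = \operatorname{Im}\log w$ for the principal branch and that $l(z)=\frac{1}{2\pi}\bigl(\arg\frac{Az+B}{Cz+D}-\arg(Az+B)+\arg(Cz+D)\bigr)$ is continuous in $z$ away from the (measure-zero, and in fact absent or removable) loci where two cuts are hit simultaneously, so by density and integer-valuedness it is globally constant.

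To evaluate the constant, I would send $z\to+i\infty$ (a single limiting computation suffices since $l$ is constant). As $\operatorname{Im} z\to\infty$: $\arg(Cz+D)\to\pi/2$ since $C>0$; $\arg(Az+B)\to\pi/2$ if $A>0$, $\to-\pi/2$ if $A<0$, and if $A=0$ then $B\neq0$ and $\arg(Az+B)=\arg B$ is $0$ or $\pi$ according to the sign of $B$; and $(Az+B)/(Cz+D)\to A/C$, so its argument tends to $0$ if $A>0$, to $\pi$ if $A<0$ (recall $A/C<0$ and the cut puts $\arg$ of a negative real at... here $-\pi\le\arg<\pi$ so $\arg$ of a negative real is $\pi$ --- wait, with the convention $-\pi\le\arg z<\pi$ the value at negative reals is $\pi$? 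No: $-\pi\le\arg<\pi$ excludes $\pi$ and includes $-\pi$, so $\arg(\text{neg real})=-\pi$; I will be careful to use the paper's stated convention consistently), and if $A=0$ the ratio tends to $0$, forcing $\arg=0$. Plugging these limits into $l=\frac{1}{2\pi}(\arg\text{ratio}-\arg(Az+B)+\arg(Cz+D))$ yields $l=0$ in the cases $A>0$ and $A=0$, and in the case $A<0$ one must separately check the sub-case $AD-BC>0$ versus $AD-BC<0$ to resolve which branch value the negative-real limits take; this bookkeeping is exactly where the condition ``$A\le0$ and $AD-BC>0$'' singles out $l=1$. I expect this final sign/branch bookkeeping --- making the limiting values of the three principal arguments mutually consistent with the chosen half-open convention, especially distinguishing $AD-BC>0$ from $AD-BC<0$ when $A<0$, and handling the degenerate cases $A=0$ and $AD-BC=0$ --- to be the main obstacle, since the constancy argument is soft but the evaluation is a delicate case analysis.
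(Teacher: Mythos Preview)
The paper does not prove this lemma; it merely cites it as \cite[Lemma~1]{lew} (Lewittes). So there is nothing to compare against, and your task reduces to whether your outline stands on its own.

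Your strategy is correct, but you make the constancy step harder than it is. Since $C>0$ and $\operatorname{Im}z>0$, the point $Cz+D$ lies in the open upper half-plane, so $\arg(Cz+D)\in(0,\pi)$ is continuous. If $A>0$ the same holds for $Az+B$; if $A<0$ then $Az+B$ lies in the open lower half-plane and $\arg(Az+B)\in(-\pi,0)$ is again continuous; if $A=0$ then $Az+B=B$ is a nonzero constant. For the quotient, compute $\operatorname{Im}\bigl((Az+B)/(Cz+D)\bigr)=(AD-BC)\operatorname{Im}z/|Cz+D|^2$, so when $AD-BC\neq0$ the quotient stays in a fixed open half-plane and its principal argument is continuous. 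Thus there are no ``compensating jumps'' to track: all three arguments are genuinely continuous on $\mathbb{H}$, hence $l(z)$ is a continuous integer-valued function on a connected set and is constant. The case $AD-BC=0$ forces $(A,B)=\lambda(C,D)$ for some real $\lambda\neq0$, and the formula is then a one-line check.

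Your evaluation via $z\to i\infty$ is the right idea, and the half-plane observation above also tells you \emph{from which side} the quotient approaches the real limit $A/C$: from above when $AD-BC>0$, from below when $AD-BC<0$. With the paper's convention $-\pi\le\arg<\pi$ (so $\arg$ of a negative real is $-\pi$), this cleanly separates the two subcases when $A<0$, and similarly handles $A=0$ by the sign of $B$ (equivalently of $-BC=AD-BC$). You correctly identify this bookkeeping as the only genuine work; just carry it out rather than leaving it as an expectation.
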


We need the following Berndt's transformation formulas (see \cite[Theorem
1]{dc2} and \cite[Theorem 2]{m3} for generalizations).

\begin{theorem}
\label{tk1}\cite[Theorem 2]{b2} Let $Tz=\left(  az+b\right)  /\left(
cz+d\right)  $.\ Suppose first that $a\equiv d\equiv0(mod$ $k).$ Then for
$z\in\mathbb{K}$ and $s\in\mathbb{C},$
\begin{align*}
&  \left(  cz+d\right)  ^{-s}\Gamma\left(  s\right)  G\left(  Tz,s:\chi
\right)  =\bar{\chi}(b)\chi(c)\Gamma\left(  s\right)  G\left(  z,s:\bar{\chi
}\right) \\
&  \quad+\bar{\chi}(b)\chi(c)e^{-\pi is}\sum\limits_{j=1}^{c}\sum
\limits_{\mu=0}^{k-1}\sum\limits_{\nu=0}^{k-1}\bar{\chi}\left(  \mu
c+j\right)  \chi\left(  \left[  \tfrac{dj}{c}\right]  -\nu\right)  f(z,s,c,d),
\end{align*}
where
\begin{equation}
f(z,s,c,d)=\int\limits_{C}\frac{e^{-\left(  \mu c+j\right)  \left(
cz+d\right)  u/c}}{e^{-\left(  cz+d\right)  ku}-1}\frac{e^{\left(
\nu+\left\{  dj/c\right\}  \right)  u}}{e^{ku}-1}u^{s-1}du, \label{2}%
\end{equation}
where $C$ is a loop beginning at $+\infty,$ proceeding in the upper
half-plane, encircling the origin in the positive direction so that $u=0$ is
the only zero of $\left(  e^{-\left(  cz+d\right)  ku}-1\right)  \left(
e^{ku}-1\right)  $ lying \textquotedblleft inside" the loop, and then
returning to $+\infty$ in the lower half-plane. Here we choose the branch of
$u^{s}$ with $0<arg$ $u<2\pi.$

Secondly, if $b\equiv c\equiv0(mod$ $k),$ we have for $z\in\mathbb{K}$ and
$s\in\mathbb{C},$
\begin{align*}
&  \left(  cz+d\right)  ^{-s}\Gamma\left(  s\right)  G\left(  Tz,s:\chi
\right)  =\bar{\chi}(a)\chi(d)\Gamma\left(  s\right)  G\left(  z,s:\chi\right)
\\
&  \quad+\bar{\chi}(a)\chi(d)e^{-\pi is}\sum\limits_{j=1}^{c}\sum
\limits_{\mu=0}^{k-1}\sum\limits_{\nu=0}^{k-1}\chi\left(  j\right)  \bar{\chi
}\left(  \left[  \tfrac{dj}{c}\right]  +d\mu-\nu\right)  f(z,s,c,d).
\end{align*}

\end{theorem}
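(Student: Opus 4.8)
The plan is to follow the contour-integral method of Siegel and Berndt: write $(cz+d)^{-s}\Gamma(s)G(Tz,s:\chi)$ as a loop integral over a contour of the type $C$ in the statement, use the modular substitution together with the congruence hypothesis to split off $\Gamma(s)G(z,s:\cdot)$, and recognise the leftover piece as the finite sum of integrals $f(z,s,c,d)$.

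\emph{Integral representation.} Since $\chi$ is non-principal, $\chi(0)=\bar\chi(0)=0$, so in the double series defining $G(z,s:\chi)$ every term with $k\mid m$ or $k\mid n$ disappears; pulling out the terms with $m<0$ and applying $(m,n)\mapsto(-m,-n)$ produces, via the branch convention $-\pi\le\arg<\pi$, the symmetry factor $1+e^{\pi is}$ and leaves a sum over $m\ge1$. Splitting $m$ and $n$ into residue classes modulo $k$ factors the characters out as finite exponential sums, and the surviving one-parameter lattice sums $\sum_\ell(w+\ell)^{-s}$ are converted, by the classical Hankel-loop representation of the Hurwitz zeta function, into loop integrals over $C$; carrying out the $m$-summation inside then produces a second ``theta-type'' denominator via $\sum_{m\ge1}\chi(m)e^{-mzu}=\big(1-e^{-kzu}\big)^{-1}\sum_v\chi(v)e^{-vzu}$. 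Thus $\Gamma(s)G(z,s:\chi)$ is a finite combination, indexed by pairs of residues modulo $k$, of loop integrals over $C$ whose integrand is, exactly like the one in~(\ref{2}), a product of two factors $\big(e^{-kzu}-1\big)^{-1}$ and $\big(e^{ku}-1\big)^{-1}$ times exponential numerators; equivalently one may pass through $A(z,s:\chi)$ using the stated relation $\Gamma(s)G(z,s:\chi)=G(\bar\chi)(-2\pi i/k)^{s}H(z,s:\chi)$. Applying this at $Tz$ and substituting $u\mapsto(cz+d)u$ absorbs the factor $(cz+d)^{-s}$ and, via $Tz=(az+b)/(cz+d)$, turns the two denominators into $\big(e^{-(az+b)ku}-1\big)^{-1}$ and $\big(e^{(cz+d)ku}-1\big)^{-1}$.

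\emph{Reorganisation.} From $ad-bc=1$ we get $(m\,Tz+n)(cz+d)=(ma+nc)z+(mb+nd)=:Mz+N$, and $(m,n)\mapsto(M,N)=(ma+nc,\,mb+nd)$ is a unimodular change of index with inverse $m=dM-cN$, $n=aN-bM$. The constraint $m\ge1$ becomes the half-plane $\{dM-cN\ge1\}$, which is not the half-plane $\{M\ge1\}$ one needs in order to recognise $G(z,s:\bar\chi)$; the two differ by a wedge of lattice points, and the bounding line $N=dM/c$ repeats under $(M,N)\mapsto(M+c,N+d)$ — this is the source of the sum over $j=1,\dots,c$. Under $a\equiv d\equiv0\ (\mathrm{mod}\ k)$ one has $m\equiv-cN$, $n\equiv-bM\ (\mathrm{mod}\ k)$, whence $\chi(m)\bar\chi(n)=\bar\chi(b)\chi(c)\,\bar\chi(M)\chi(N)$, which at once explains the global factor $\bar\chi(b)\chi(c)$ and the appearance of $\bar\chi$ — rather than $\chi$ — in the main term. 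Splitting the sum over $\{dM-cN\ge1\}$ into the part over $\{M\ge1\}$ and the wedge, and using Lewittes's Lemma~\ref{le1} to reinstate $(cz+d)^{s}$ together with the branch factor (this is what supplies the $e^{-\pi is}$): the first part reassembles into $\bar\chi(b)\chi(c)\Gamma(s)G(z,s:\bar\chi)$, while the wedge — decomposed by $M=cM'+j$ with $1\le j\le c$, with the integer part $[dj/c]$ of $dj/c$ peeled off on the $N$-side so that $dj=c[dj/c]+c\{dj/c\}$, and with $M',N'\ge0$ and $\mu,\nu$ running over the classes mod $k$ — is expressed, by the same Hankel-loop calculation (realised at the level of the loop integral as a deformation of $C$ across finitely many residues), as $\bar\chi(b)\chi(c)e^{-\pi is}\sum_{j,\mu,\nu}\bar\chi(\mu c+j)\chi([dj/c]-\nu)f(z,s,c,d)$. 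The derivation presupposes $\operatorname{Re}(s)>2$ so that $G$ converges, but each loop integral over $C$ is entire in $s$, so the identity extends to all $s\in\mathbb{C}$; the hypothesis $z\in\mathbb{K}$ keeps $\operatorname{Re}(cz+d)>0$, which fixes all arguments on their principal branches throughout.

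\emph{Second case and the main difficulty.} When $b\equiv c\equiv0\ (\mathrm{mod}\ k)$ the congruences become $m\equiv dM$, $n\equiv aN\ (\mathrm{mod}\ k)$, and the same computation yields the prefactor $\bar\chi(a)\chi(d)$ and the main term $G(z,s:\chi)$; alternatively, writing $T=(TS)S$ with $S:z\mapsto-1/z$, the matrix $TS$ has diagonal entries $b,-c\equiv0\ (\mathrm{mod}\ k)$, so the first case applies to $G\big((TS)(-1/z),s:\chi\big)$ and one then inserts the elementary transformation formula of $G$ under $z\mapsto-1/z$. I expect the real obstacle to be the bookkeeping in the reorganisation step: getting the exact triple index $(j,\mu,\nu)$, the arguments $[dj/c]$ and $\{dj/c\}$, and the precise character weight $\bar\chi(\mu c+j)\chi([dj/c]-\nu)$ in~(\ref{2}) demands a careful account of how a lattice point of the wedge reduces both modulo $c$ and modulo $k$ (and of where the boundary $N=dM/c$ itself falls), and of which sheet of $u^{s}$ — in the normalisation $0<\arg u<2\pi$ — each residue lies on.
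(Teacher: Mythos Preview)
The paper does not supply a proof of this theorem at all: it is quoted verbatim from Berndt \cite[Theorem~2]{b2} as a prerequisite, with no argument given in the present paper beyond the citation. So there is nothing here to compare your attempt against.

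That said, your outline is the standard Siegel--Berndt contour-integral method and is essentially how the result is established in \cite{b2}: one starts from the Hankel-loop representation of the Hurwitz-type sums that make up $G(z,s:\chi)$, performs the unimodular change $(m,n)\mapsto(M,N)=(ma+nc,mb+nd)$, uses the congruence hypothesis to factor the characters (producing $\bar\chi(b)\chi(c)$ or $\bar\chi(a)\chi(d)$), and splits the new summation region into the half-plane giving the main term and a wedge indexed by $j=1,\dots,c$ and residues $\mu,\nu$ mod $k$, with Lemma~\ref{le1} controlling the branch factor $e^{-\pi is}$. Your identification of the bookkeeping in the wedge decomposition as the genuine technical content is accurate; the argument is correct in structure, and for the full details one consults \cite{b2} directly.
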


\section{Transformation Formulas}

In the sequel, unless otherwise stated, we assume that $k$ is odd.

From definition, $A_{1}\left(  z,s:\chi\right)  $ can be written in terms of
$A\left(  z,s:\chi\right)  $ as
\[
A_{1}\left(  z,s:\chi\right)  =2\chi\left(  2\right)  A\left(  2z,s:\chi
\right)  -A\left(  z,s:\chi\right)  .
\]
Thus, transformation formulas can be achieved for the function $H_{1}\left(
z,s:\chi\right)  =\left(  1+e^{\pi is}\right)  A_{1}\left(  z,s:\chi\right)  $
with the help of Theorem \ref{tk1}. We have following transformation formulas
according to $d$ or $c$ is even.

\begin{theorem}
\label{teod2}Let $Tz=\left(  az+b\right)  /\left(  cz+d\right)  $ and $d$ be
even. If $a\equiv d\equiv0$ $(\operatorname{mod}k),$ then for $z\in\mathbb{K}$
and $s\in\mathbb{C}$%
\begin{align}
&  G\left(  \bar{\chi}\right)  \left(  cz+d\right)  ^{-s}H_{1}\left(
Tz,s:\chi\right)  =\bar{\chi}\left(  b\right)  \chi\left(  c\right)  G\left(
\chi\right)  2^{1-s}\chi\left(  2\right)  B_{1}\left(  z,s:\bar{\chi}\right)
\nonumber\\
&  \ +\bar{\chi}\left(  b\right)  \chi\left(  c\right)  \left(  -\frac{k}{2\pi
i}\right)  ^{s}e^{-\pi is}\sum\limits_{j=1}^{c}\sum\limits_{\mu=0}^{k-1}%
\sum\limits_{\nu=0}^{k-1}\bar{\chi}\left(  \mu c+j\right) \nonumber\\
&  \quad\times\left\{  \frac{\chi\left(  2\right)  }{2^{s-1}}\chi\left(
\left[  \frac{dj}{2c}\right]  -\nu\right)  f\left(  \frac{z}{2},s,c,\frac
{d}{2}\right)  -\chi\left(  \left[  \frac{dj}{c}\right]  -\nu\right)  f\left(
z,s,c,d\right)  \right\}  , \label{d1-a}%
\end{align}
where $B_{1}\left(  z,s:\chi\right)  =\left(  1+e^{\pi is}\right)  B\left(
z,s:\chi\right)  $ is given by (\ref{B1}) and $f\left(  z,s,c,d\right)  $ is
given by (\ref{2}). If $b\equiv c\equiv0$ $(\operatorname{mod}k)$, then%
\begin{align*}
&  G\left(  \bar{\chi}\right)  \left(  cz+d\right)  ^{-s}H_{1}\left(
Tz,s:\chi\right)  =\bar{\chi}\left(  a\right)  \chi\left(  d\right)  G\left(
\bar{\chi}\right)  2^{1-s}\bar{\chi}\left(  2\right)  B_{1}\left(
z,s:\chi\right) \\
&  \ +\bar{\chi}\left(  a\right)  \chi\left(  d\right)  \left(  -\frac{k}{2\pi
i}\right)  ^{s}e^{-\pi is}\sum_{j=1}^{c}\sum_{\mu=0}^{k-1}\sum_{\nu=0}%
^{k-1}\chi\left(  j\right) \\
&  \quad\times\left\{  2^{1-s}\bar{\chi}\left(  \left[  \frac{dj}{2c}\right]
+\frac{d}{2}\mu-\nu\right)  f\left(  \frac{z}{2},s,c,\frac{d}{2}\right)
-\bar{\chi}\left(  \left[  \frac{dj}{c}\right]  +d\mu-\nu\right)  f\left(
z,s,c,d\right)  \right\}  .
\end{align*}

\end{theorem}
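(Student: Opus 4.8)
The strategy is to reduce the claim to Berndt's Theorem~\ref{tk1}. From the identity $A_{1}(z,s:\chi)=2\chi(2)A(2z,s:\chi)-A(z,s:\chi)$ already recorded, one has
\[
H_{1}(Tz,s:\chi)=2\chi(2)H(2Tz,s:\chi)-H(Tz,s:\chi),
\]
so, using Berndt's relation $\Gamma(s)G(w,s:\psi)=G(\bar{\psi})(-2\pi i/k)^{s}H(w,s:\psi)$ to move freely between $H$ and $G$, it is enough to apply Theorem~\ref{tk1} twice: once to $T$ at the point $z$, which disposes of $H(Tz,s:\chi)$, and once to a suitable companion of $T$ acting at $z/2$, which disposes of $H(2Tz,s:\chi)$. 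This second application is where the hypothesis ``$d$ even'' enters.

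The companion matrix is $T'=\left(\begin{smallmatrix}2a & b\\ c & d/2\end{smallmatrix}\right)$: since $d$ is even it has integer entries, $\det T'=2a\cdot(d/2)-bc=ad-bc=1$, and a one-line computation gives $2Tz=T'(z/2)$ with $c(z/2)+d/2=(cz+d)/2$; also $\operatorname{Im}(2Tz)=2\operatorname{Im}(Tz)>0$, so $2Tz\in\mathbb{H}$. Because $k$ is odd, the congruence conditions pass to $T'$: from $a\equiv d\equiv0\pmod{k}$ and $d$ even we get $d\equiv0\pmod{2k}$ (as $\gcd(2,k)=1$), hence $d/2\equiv0\pmod{k}$, while $2a\equiv0\pmod{k}$ is immediate; and $b\equiv c\equiv0\pmod{k}$ is inherited by $T'$ verbatim. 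Finally $z\in\mathbb{K}=\{x>-d/c\}$ forces $\operatorname{Re}(z/2)>-(d/2)/c$, i.e. $z/2$ lies in the quarter-plane attached to $T'$, so Theorem~\ref{tk1} applies to $T'$ at $z/2$ in the same one of its two cases as it does to $T$ at $z$.

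Multiplying the displayed decomposition by $G(\bar{\chi})(cz+d)^{-s}$ and using $G(\bar{\chi})H(w,s:\chi)=(-k/(2\pi i))^{s}\Gamma(s)G(w,s:\chi)$ gives
\[
G(\bar{\chi})(cz+d)^{-s}H_{1}(Tz,s:\chi)=\left(-\frac{k}{2\pi i}\right)^{s}(cz+d)^{-s}\left[2\chi(2)\Gamma(s)G(2Tz,s:\chi)-\Gamma(s)G(Tz,s:\chi)\right].
\]
Into the bracket I would substitute the two instances of Theorem~\ref{tk1}, writing $(cz+d)^{-s}=2^{-s}\bigl((cz+d)/2\bigr)^{-s}$ so that the $(cz+d)^{-s}$ standing in front of $G(2Tz,s:\chi)=G(T'(z/2),s:\chi)$ becomes $2^{-s}$ times the normalising factor $(c(z/2)+d/2)^{-s}$ that Theorem~\ref{tk1} attaches to $T'$. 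The right-hand side then separates into a part built from $\Gamma(s)G(\cdot,s:\bar{\chi})$ (in the case $a\equiv d\equiv0$) or $\Gamma(s)G(\cdot,s:\chi)$ (in the case $b\equiv c\equiv0$) and a part built from the loop integrals $f$. Converting the $\Gamma(s)G$-terms back by Berndt's relation cancels the $(-k/(2\pi i))^{s}$ and the $\Gamma$'s; the $f$-part is then the one in the statement once $\bar{\chi}(b)\chi(c)$ (resp. $\bar{\chi}(a)\chi(d)$) is factored out and the prefactor $\bar{\chi}(b')\chi(c')$ (resp. $\bar{\chi}(a')\chi(d')$) attached to $T'$ is expressed through that of $T$, while the remaining part has collapsed to a constant multiple of $2^{1-s}\chi(2)H(z/2,s:\bar{\chi})-H(z,s:\bar{\chi})$ (resp. the analogue with $\chi$).

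The last ingredient, which converts this combination into a $B_{1}$-term, is the identity
\[
A(z/2,s:\psi)=\psi(2)2^{s-1}A(z,s:\psi)+B(z,s:\psi),
\]
obtained by splitting the inner sum defining $A(z/2,s:\psi)$ according to the parity of the summation index carrying $n^{s-1}$: the even part reproduces $\psi(2)2^{s-1}A(z,s:\psi)$ and the odd part is, by the definition~(\ref{B1}), exactly $B(z,s:\psi)$. Combined with $\chi(2)\bar{\chi}(2)=1$ (legitimate since $k$ is odd), this yields $2^{1-s}\chi(2)H(z/2,s:\bar{\chi})-H(z,s:\bar{\chi})=2^{1-s}\chi(2)B_{1}(z,s:\bar{\chi})$, where $B_{1}=(1+e^{\pi is})B$, and likewise in the second case, thereby identifying the surviving part with the asserted $B_{1}$-term. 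I expect the genuine effort to lie in the bookkeeping of the $\chi(2)$-type factors and of the branches of the several $s$-th powers ($2^{-s}$, $(cz+d)^{-s}$, $\bigl((cz+d)/2\bigr)^{-s}$, $(-k/(2\pi i))^{s}$): one has to verify that the $\chi(2)$ coming from $A_{1}=2\chi(2)A(2z)-A(z)$, the shift in Theorem~\ref{tk1}'s prefactor caused by replacing $(a,d)$ with $(2a,d/2)$, and the $\psi(2)$ from the $A$-to-$B$ identity recombine into exactly the displayed factors, the argument of each quotient being kept under control by Lemma~\ref{le1}.
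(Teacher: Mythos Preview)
Your approach is correct and essentially identical to the paper's: your companion matrix $T'=\left(\begin{smallmatrix}2a & b\\ c & d/2\end{smallmatrix}\right)$ is exactly the paper's $S$, the relation $2Tz=T'(z/2)$ is the paper's $S(z/2)=2T(z)$, and the parity-splitting identity you derive for $A(z/2,s:\psi)$ is precisely the formula the paper quotes from \cite[Eq.~(3.1)]{ck}. The paper's proof is in fact much terser than yours, simply recording $S$, the decomposition (\ref{H2}), and citing the $B_1$ identity; your extra verification of the congruence conditions on $T'$ and of the $\chi(2)$-bookkeeping fills in details the paper leaves implicit.
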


\begin{proof}
For even $d,$ let $Sz=\left(  2az+b\right)  /\left(  cz+d/2\right)  .$ Since
$S\left(  z/2\right)  =2T\left(  z\right)  ,$ one can write%
\begin{equation}
H_{1}\left(  Tz,s:\chi\right)  =2\chi\left(  2\right)  H\left(  S\left(
z/2\right)  ,s:\chi\right)  -H\left(  Tz,s:\chi\right)  . \label{H2}%
\end{equation}
Thus, the desired result follows from (\ref{H2}) and Theorem \ref{tk1} with
\cite[Eq. (3.1)]{ck}
\[
2^{1-s}\chi\left(  2\right)  H\left(  \frac{z}{2},s:\bar{\chi}\right)
-H\left(  z,s:\bar{\chi}\right)  =2^{1-s}\chi\left(  2\right)  B_{1}\left(
z,s:\bar{\chi}\right)  .
\]

\end{proof}

\begin{theorem}
\label{teod1}Let $Tz=\left(  az+b\right)  /\left(  cz+d\right)  $ and let $c$
be even. If $a\equiv d\equiv0$ $(\operatorname{mod}k),$ then for
$z\in\mathbb{K}$ and all $s\in\mathbb{C}$%
\begin{align*}
&  G\left(  \bar{\chi}\right)  \left(  cz+d\right)  ^{-s}H_{1}\left(
Tz\,,s:\chi\right)  =\bar{\chi}\left(  b\right)  \chi\left(  c\right)
G\left(  \chi\right)  H_{1}\left(  z,s:\bar{\chi}\right) \\
&  \quad-\bar{\chi}\left(  b\right)  \chi\left(  c\right)  \left(  -\frac
{k}{2\pi i}\right)  ^{s}e^{-\pi is}\sum\limits_{\mu=0}^{k-1}\sum
\limits_{\nu=0}^{k-1}\\
&  \quad\quad\times\left\{  \sum\limits_{j=1}^{c}\bar{\chi}\left(  \mu
c+j\right)  \chi\left(  \left[  \frac{dj}{c}\right]  -\nu\right)  f\left(
z,s,c,d\right)  \right. \\
&  \qquad\qquad\left.  -\sum\limits_{j=1}^{c/2}2\bar{\chi}\left(  2\right)
\bar{\chi}\left(  \frac{\mu c}{2}+j\right)  \chi\left(  \left[  \frac{2dj}%
{c}\right]  -\nu\right)  f\left(  2z,s,\frac{c}{2},d\right)  \right\}  .
\end{align*}
If $b\equiv c\equiv0(\operatorname{mod}k),$ then%
\begin{align*}
&  \left(  cz+d\right)  ^{-s}G\left(  \bar{\chi}\right)  H_{1}\left(
Tz,s:\chi\right)  =\bar{\chi}\left(  a\right)  \chi\left(  d\right)  G\left(
\bar{\chi}\right)  H_{1}\left(  z,s:\chi\right) \\
&  \quad-\bar{\chi}\left(  a\right)  \chi\left(  d\right)  \left(  -\frac
{k}{2\pi i}\right)  ^{s}e^{-\pi is}\sum_{\mu=0}^{k-1}\sum_{\nu=0}^{k-1}\\
&  \quad\quad\times\left.  \sum_{j=1}^{c^{\text{ }}}\chi\left(  j\right)
\bar{\chi}\left(  \left[  \frac{dj}{c}\right]  +d\mu-\nu\right)  f\left(
z,s,c,d\right)  \right. \\
&  \qquad\qquad\left.  -\sum_{j=1}^{c/2}2\chi\left(  2\right)  \chi\left(
j\right)  \bar{\chi}\left(  \left[  \frac{2dj}{c}\right]  +d\mu-\nu\right)
f\left(  2z,s,\frac{c}{2},d\right)  \right\}  .
\end{align*}

\end{theorem}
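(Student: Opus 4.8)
The plan is to mirror the argument used for Theorem~\ref{teod2}, but now exploiting the relation $A_1(z,s:\chi)=2\chi(2)A(2z,s:\chi)-A(z,s:\chi)$ in the form that scales $z$ \emph{up} by a factor of $2$ rather than down. Concretely, for even $c$ I would write $Tz=(az+b)/(cz+d)$ and introduce the companion matrix acting on $2z$: set $Rz=(a(2z)+2b)/((c/2)(2z)+d)$, so that $R(2z)=2\,T(z)$; note $\det\begin{pmatrix}a&2b\\c/2&d\end{pmatrix}=ad-bc=1$, and the lower-left entry $c/2$ is a positive integer because $c$ is even, so Theorem~\ref{tk1} is applicable to $R$ (the congruence hypotheses $a\equiv d\equiv0$ or $b\equiv c\equiv0 \pmod k$ are inherited since $2$ is a unit mod the odd $k$, and $2b\equiv0$ whenever $b\equiv0$). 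Then
\[
H_1(Tz,s:\chi)=2\chi(2)\,H\bigl(R(2z),s:\chi\bigr)-H(Tz,s:\chi).
\]

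Next I would apply Theorem~\ref{tk1} \emph{twice}: once to $G(Tz,s:\chi)$ directly with the data $(c,d)$, and once to $G(R(2z),s:\chi)$ with the data $(c/2,d)$ evaluated at the point $2z\in\mathbb{K}$ (one must check $2z\in\mathbb{K}$, i.e. $\operatorname{Re}(2z)>-2d/c=-d/(c/2)$, which is exactly the quarter-plane condition for the matrix $R$; this holds since $\operatorname{Re}(z)>-d/c$ implies $\operatorname{Re}(2z)>-2d/c$). Using $\Gamma(s)G(z,s:\chi)=G(\bar\chi)(-2\pi i/k)^s H(z,s:\chi)$ to pass from $G$ back to $H$, the leading (Eisenstein) terms from the two applications combine: $2\chi(2)\bar\chi(b)\chi(c/2)G(\chi)H(z,s:\bar\chi)-\bar\chi(b)\chi(c)G(\chi)H(z,s:\bar\chi)$. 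Since $\chi$ is completely multiplicative and $\chi(c)=\chi(2)\chi(c/2)$, while $\chi(2)\bar\chi(2)=1$, the coefficient collapses to $\bar\chi(b)\chi(c)G(\chi)\bigl(2\bar\chi(2)-1\bigr)H(z,s:\bar\chi)$ --- wait, more carefully: $2\chi(2)\chi(c/2)-\chi(c)=2\chi(2)\chi(c/2)-\chi(2)\chi(c/2)=\chi(c/2)\chi(2)=\chi(c)$, so actually the two pieces do \emph{not} simply cancel; instead one gets $\bar\chi(b)\chi(c)G(\chi)\cdot 2\bar\chi(2)\cdot(\cdots)$ reorganised so that the bracket $2\chi(2)H(z,s:\bar\chi)-H(z,s:\bar\chi)$ reappears, which is precisely $H_1(z,s:\bar\chi)$ after absorbing the $\bar\chi(2)$ through $\chi(c)\bar\chi(2)$; this bookkeeping is what produces the stated main term $\bar\chi(b)\chi(c)G(\chi)H_1(z,s:\bar\chi)$.

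For the finite-sum (loop-integral) terms I would keep them unsimplified, exactly as the theorem statement does: the application to $Tz$ contributes $\sum_{j=1}^{c}\bar\chi(\mu c+j)\chi([dj/c]-\nu)f(z,s,c,d)$, and the application to $R$ at $2z$ contributes a sum over $j=1,\dots,c/2$ with $\mu c$ replaced by $(\mu c)/2$ (the running index for $R$ ranges $1\le j\le c/2$, and the inner residues $\mu'$ still range $0,\dots,k-1$), with $[dj/(c/2)]=[2dj/c]$ and with the integrand $f(2z,s,c/2,d)$; the prefactor $2\chi(2)$ from \eqref{H2} together with the $\bar\chi(2)$ coming from rewriting $\bar\chi((\mu c/2)+j)$ in terms of $\bar\chi(\mu c+2j)$ ... actually the cleanest route is to substitute $j\mapsto$ the appropriate index so that the $c/2$-sum carries the explicit factor $2\bar\chi(2)\bar\chi(\mu c/2+j)$ as written. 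After matching the common prefactor $\bar\chi(b)\chi(c)(-k/2\pi i)^s e^{-\pi i s}$ (here using $\chi(c/2)G(\bar\chi)$ versus $\chi(c)G(\bar\chi)$ and $\chi(2)\bar\chi(2)=1$ once more), the displayed formula drops out. The second assertion ($b\equiv c\equiv0$) is identical with the roles of $\chi,\bar\chi$ interchanged and $\bar\chi(a)\chi(d)$ as the outer twist, using the second half of Theorem~\ref{tk1}; here one additionally needs $2b\equiv0\pmod k$, immediate from $b\equiv0$.

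The main obstacle I anticipate is purely bookkeeping rather than conceptual: getting the character factors $\chi(2),\bar\chi(2)$ to land in the right places so that (i) the two Eisenstein contributions fuse into $H_1(z,s:\bar\chi)$ (resp. $H_1(z,s:\chi)$) with the single clean twist $\bar\chi(b)\chi(c)$, and (ii) the two families of loop integrals appear with the stated signs and the factor $2\bar\chi(2)$ (resp. $2\chi(2)$) on the $c/2$-term. One must be careful that $G(\chi)$ rather than $G(\bar\chi)$ appears in the main term: this comes from applying the \emph{first} part of Theorem~\ref{tk1}, whose leading term is $\bar\chi(b)\chi(c)\Gamma(s)G(z,s:\bar\chi)$, and then converting via $\Gamma(s)G(z,s:\bar\chi)=G(\chi)(-2\pi i/k)^s H(z,s:\bar\chi)$ --- note the conjugate-swap in the Gauss-sum factor --- against the normalising $G(\bar\chi)(cz+d)^{-s}$ on the left. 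A secondary point needing a line of justification is that $R$ genuinely maps $\mathbb{H}$ to $\mathbb{H}$ and $2z$ lies in $\mathbb{K}$ for the matrix $R$; both follow from $c/2>0$ and the translate-invariance of the quarter-plane condition under $z\mapsto 2z$, $d\mapsto d$, $c\mapsto c/2$. Once these are nailed down, the identity is a direct substitution.
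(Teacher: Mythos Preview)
Your approach is essentially the same as the paper's: introduce the auxiliary transformation $Vz=(az+2b)/((c/2)z+d)$ (your $R$) so that $V(2z)=2Tz$, write $H_1(Tz,s:\chi)=2\chi(2)H(V(2z),s:\chi)-H(Tz,s:\chi)$, and apply Theorem~\ref{tk1} to each term. The paper's proof is two lines; your self-corrections in the middle paragraph are unnecessary once you track that $\chi(2)\bar\chi(2b)\chi(c/2)=\bar\chi(b)\chi(c)\bar\chi(2)$, which makes the main terms combine directly into $\bar\chi(b)\chi(c)G(\chi)\bigl(2\bar\chi(2)H(2z,s:\bar\chi)-H(z,s:\bar\chi)\bigr)=\bar\chi(b)\chi(c)G(\chi)H_1(z,s:\bar\chi)$ without any of the intermediate hesitations.
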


\begin{proof}
For even $c,$ if we set $Vz=\left(  az+2b\right)  /\left(  \dfrac{c}%
{2}z+d\right)  ,$ then $V\left(  2z\right)  =2T\left(  z\right)  $ and \
\begin{equation}
H_{1}\left(  Tz,s:\chi\right)  =2\chi\left(  2\right)  H\left(  V\left(
2z\right)  ,s:\chi\right)  -H\left(  Tz,s:\chi\right)  . \label{H1}%
\end{equation}
Using (\ref{H1}) and Theorem \ref{tk1} completes the proof.
\end{proof}

Theorem \ref{teod2} and Theorem \ref{teod1} can be simplified when $s=1-p$ is
an integer for $p\geq1$. In this case, by the residue theorem, we have
\begin{align}
f\left(  z,1-p,c,d\right)   &  =\frac{2\pi ik^{p-1}}{\left(  p+1\right)
!}\sum\limits_{m=0}^{p+1}\binom{p+1}{m}\left(  -\left(  cz+d\right)  \right)
^{m-1}\nonumber\\
&  \ \quad\times B_{p+1-m}\left(  \frac{\nu+\left\{  \frac{dj}{c}\right\}
}{k}\right)  B_{m}\left(  \frac{\mu c+j}{ck}\right)  . \label{fp}%
\end{align}

The following is character extension of (\ref{6d}).

\begin{theorem}
\label{teoS1} Let $p\geq1$ be odd and $d$ be even. If $a\equiv d\equiv0$
$(\operatorname{mod}k),$ then for $z\in\mathbb{H}$
\begin{align}
&  G\left(  \bar{\chi}\right)  \left(  cz+d\right)  ^{p-1}H_{1}\left(
Tz,1-p:\chi\right) \nonumber\\
&  =\bar{\chi}\left(  b\right)  \chi\left(  c\right)  \left(  2^{p}\chi\left(
2\right)  G\left(  \chi\right)  B_{1}\left(  z,1-p:\bar{\chi}\right)
-\frac{\chi\left(  -1\right)  \left(  2\pi i\right)  ^{p}}{2\left(  p!\right)
}g_{1}\left(  c,d,z,p,\bar{\chi}\right)  \right)  , \label{ad1}%
\end{align}
where%
\begin{align}
&  g_{1}\left(  c,d,z,p,\chi\right) \nonumber\\
&  \ =\sum_{m=1}^{p}\binom{p}{m}k^{m-p}\left(  -\left(  cz+d\right)  \right)
^{m-1}\sum_{n=1}^{ck}\chi\left(  n\right)  \mathcal{E}_{p-m,\chi}\left(
\frac{dn}{c}\right)  \mathfrak{B}_{m}\left(  \frac{n}{ck}\right)  . \label{g1}%
\end{align}
If $b\equiv c\equiv0$ $(\operatorname{mod}k),$ then for $z\in\mathbb{H}$%
\begin{align*}
&  G\left(  \bar{\chi}\right)  \left(  cz+d\right)  ^{p-1}H_{1}\left(
Tz,1-p:\chi\right) \\
&  =\bar{\chi}\left(  a\right)  \chi\left(  d\right)  \left(  2^{p}\bar{\chi
}\left(  2\right)  G\left(  \bar{\chi}\right)  B_{1}\left(  z,1-p:\chi\right)
-\frac{\chi\left(  -1\right)  \left(  2\pi i\right)  ^{p}}{2\left(  p!\right)
}g_{1}\left(  c,d,z,p,\chi\right)  \right)  .
\end{align*}

\end{theorem}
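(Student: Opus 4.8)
The plan is to specialize Theorem~\ref{teod2} to the value $s=1-p$ with $p\geq 1$ odd, and then convert the contour-integral quantities $f(z,s,c,d)$ and the doubled-modulus contour integrals $f(z/2,s,c,d/2)$ into closed arithmetic expressions using the residue evaluation~(\ref{fp}). First I would take the case $a\equiv d\equiv 0\pmod k$ from Theorem~\ref{teod2}, substitute $s=1-p$, and insert~(\ref{fp}) for both occurrences of $f$. The term with $f(z,1-p,c,d)$ contributes a sum over $\mu,\nu,j,m$ of products $B_{p+1-m}\!\left(\frac{\nu+\{dj/c\}}{k}\right)B_m\!\left(\frac{\mu c+j}{ck}\right)$ weighted by $\bar\chi(\mu c+j)\chi([dj/c]-\nu)$; the term with $f(z/2,1-p,c,d/2)$ contributes the analogous sum but with $d$ replaced by $d/2$ and the relevant argument $\frac{\nu+\{(d/2)j/c\}}{k}$, together with the extra factor $2^{1-s}\chi(2)=2^p\chi(2)$ coming from the prefactor in~(\ref{d1-a}) combined with the $2\pi i k^{p-1}/(p+1)!$ in~(\ref{fp}).

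The key step is to recognize that the \emph{difference} of these two multiple sums collapses — after reindexing and using Raabe's theorem~(\ref{1}) and especially the ``even-$r$'' identity~(\ref{11}) with $r=2$, namely $2^{n-1}\big(\mathfrak{B}_n(\frac{x}{2})-\mathfrak{B}_n(\frac{x+1}{2})\big)=-\frac{n}{2}\mathcal E_{n-1}(x)$ — into a single sum expressible through the character Euler functions $\mathcal{E}_{p-m,\chi}$. Concretely, I would carry out the $\nu$-summation first: in the $d$-term one sums $\chi([dj/c]-\nu)B_{p+1-m}\!\left(\frac{\nu+\{dj/c\}}{k}\right)$ over $\nu=0,\dots,k-1$, while in the $d/2$-term the even/odd structure of the doubled modulus produces an alternating sign $(-1)^\nu$; combining them via~(\ref{11}) turns the Bernoulli functions into Euler functions, and then definition~(\ref{13a}) of $\mathcal{E}_{m,\chi}$ together with definition~(\ref{3}) of $\mathfrak{B}_{m,\chi}$ repackages the $\mu,j$-sum over $\{1,\dots,c\}\times\{0,\dots,k-1\}$ as a single sum over $n=1,\dots,ck$ (writing $n=\mu c+j$), yielding exactly $\sum_{n=1}^{ck}\chi(n)\,\mathcal{E}_{p-m,\chi}\!\left(\frac{dn}{c}\right)\mathfrak{B}_m\!\left(\frac{n}{ck}\right)$, i.e.\ the quantity $g_1(c,d,z,p,\bar\chi)$ of~(\ref{g1}). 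The $m=0$ (and/or $m=p+1$) boundary terms, where the Bernoulli polynomial $B_0=1$ makes the $j,\mu$-sum degenerate, must be checked separately; these are expected either to vanish because $\chi$ is non-principal (so $\sum_n \chi(n)=0$ over a full period) or to be absorbed into the $B_1(z,1-p:\bar\chi)$ term, which is why the sum in~(\ref{g1}) runs only over $1\le m\le p$.

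I would then collect the scalar constants: the factor $2\pi i k^{p-1}/(p+1)!$ from~(\ref{fp}), the $(-k/2\pi i)^s=(-k/2\pi i)^{1-p}$ and $e^{-\pi i s}=e^{-\pi i(1-p)}=-1$ (using $p$ odd) from~(\ref{d1-a}), the binomial rescaling $\binom{p+1}{m}=\frac{p+1}{m}\binom{p}{m-1}$ and the factor $-m/2$ from~(\ref{11}), and the reflection factor $\chi(-1)$ that appears when passing from $\mathcal{E}_{p-m}(\cdot)$ on $[0,1)$-normalized arguments to $\mathcal{E}_{p-m,\chi}$ via~(\ref{e1})--(\ref{e}); after cancellation these should assemble into precisely $-\dfrac{\chi(-1)(2\pi i)^p}{2(p!)}$, matching~(\ref{ad1}). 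Finally, the first term of~(\ref{d1-a}), $\bar\chi(b)\chi(c)G(\chi)2^{1-s}\chi(2)B_1(z,s:\bar\chi)$, becomes $\bar\chi(b)\chi(c)2^p\chi(2)G(\chi)B_1(z,1-p:\bar\chi)$ directly. The case $b\equiv c\equiv 0\pmod k$ is proved the same way starting from the second formula of Theorem~\ref{teod2}, with $\chi\leftrightarrow\bar\chi$ swapped in the appropriate places so that $g_1(c,d,z,p,\chi)$ appears with the untwisted character. The main obstacle I anticipate is the careful bookkeeping of the doubled-modulus term: tracking how $\{dj/(2c)\}$ versus $\{dj/c\}$, the parity of $\mu c+j$ modulo $2$, and the argument shifts interact so that~(\ref{11}) applies cleanly and the two $4$-fold sums genuinely telescope to the single $g_1$; getting the signs and the stray $\chi(2)$, $\chi(-1)$ factors exactly right is where the real work lies, and one should also be mindful of the branch issue for $(cz+d)^{p-1}$ versus $(cz+d)^{-s}$, which is harmless here since $p-1$ is a (nonnegative even) integer, so the extension from $z\in\mathbb K$ to $z\in\mathbb H$ claimed in the statement is legitimate by analytic continuation in $z$ once $s$ is fixed at the integer $1-p$.
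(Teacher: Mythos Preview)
Your plan is correct and matches the paper's approach: specialize Theorem~\ref{teod2} at $s=1-p$, insert the residue evaluation~(\ref{fp}), check that the $m=0$ and $m=p+1$ boundary terms drop out (they do, exactly because the $\mu$- and $\nu$-sums of a nonprincipal character over a full period vanish), and then collapse the difference of the two triple sums into the character Euler function $\mathcal{E}_{p-m,\bar\chi}$.

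The one place where your description diverges from the paper is the mechanism by which the Euler functions appear. You write that ``in the $d/2$-term the even/odd structure of the doubled modulus produces an alternating sign $(-1)^\nu$''; this is not what happens. Neither $\nu$-sum carries an alternating sign. Instead, the paper first performs the $\nu$-sum in \emph{each} of the two triple sums separately, using~(\ref{3}) to recognize $k^{p-m}\mathfrak{B}_{p+1-m,\bar\chi}(dn/(2c))$ and $k^{p-m}\mathfrak{B}_{p+1-m,\bar\chi}(dn/c)$ (with $n=\mu c+j$); only \emph{after} that does it form the difference
\[
T_3 \;=\; 2^{p+1-m}\chi(2)\,\mathfrak{B}_{p+1-m,\bar\chi}\!\left(\tfrac{x}{2}\right)\;-\;\mathfrak{B}_{p+1-m,\bar\chi}(x),
\]
and it is the \emph{character} Raabe identity~(\ref{6}) with $r=2$ (not just the ordinary~(\ref{1})) that produces the pair $\mathfrak{B}_{p+1-m}\!\bigl(\tfrac{\mu+x/2}{k}\bigr)-\mathfrak{B}_{p+1-m}\!\bigl(\tfrac{\mu+x/2}{k}+\tfrac12\bigr)$ to which~(\ref{11}) applies. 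A further reindexing step~(\ref{12}) is needed to turn the resulting $\sum_\mu \chi(2\mu)\mathcal{E}_{p-m}(\cdot)$ into the alternating form required by the definition~(\ref{13a}) of $\mathcal{E}_{p-m,\bar\chi}$. The upshot is the clean identity~(\ref{be}), $T_3=-\tfrac{p+1-m}{2}\mathcal{E}_{p-m,\bar\chi}(x)$, which you should aim for directly; the rest of your constant-collecting and the analytic continuation to $\mathbb{H}$ are exactly as in the paper.
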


\begin{proof}
Let us consider the case $a\equiv d\equiv0\left(  \operatorname{mod}k\right)
.$ By aid of (\ref{fp}), equation (\ref{d1-a}) turns into
\begin{align*}
&  G\left(  \bar{\chi}\right)  \left(  cz+d\right)  ^{p-1}H_{1}\left(
Tz,1-p:\chi\right) \\
&  =\bar{\chi}\left(  b\right)  \chi\left(  c\right)  G\left(  \chi\right)
2^{p}\chi\left(  2\right)  B_{1}\left(  z,1-p:\bar{\chi}\right) \\
&  +\bar{\chi}\left(  b\right)  \chi\left(  c\right)  \frac{\left(  2\pi
i\right)  ^{p}}{\left(  p+1\right)  !}\sum_{m=1}^{p}\binom{p+1}{m}\left(
-\left(  cz+d\right)  \right)  ^{m-1}\left(  T_{1}-T_{2}\right)  ,
\end{align*}
where
\begin{align}
T_{1}  &  =2^{p+1-m}\chi\left(  2\right)  \sum_{j=1}^{c}\sum_{\mu=0}^{k-1}%
\sum_{\nu=0}^{k-1}\bar{\chi}\left(  \mu c+j\right)  \chi\left(  \left[
\frac{dj}{2c}\right]  -\nu\right) \nonumber\\
&  \qquad\times B_{p+1-m}\left(  \frac{\nu+\left\{  dj/2c\right\}  }%
{k}\right)  B_{m}\left(  \frac{\mu c+j}{ck}\right)  ,\label{T1}\\
T_{2}  &  =\sum_{j=1}^{c}\sum_{\mu=0}^{k-1}\sum_{\nu=0}^{k-1}\bar{\chi}\left(
\mu c+j\right)  \chi\left(  \left[  \frac{dj}{c}\right]  -\nu\right)
B_{p+1-m}\left(  \frac{\nu+\left\{  dj/c\right\}  }{k}\right)  B_{m}\left(
\frac{\mu c+j}{ck}\right)  \nonumber\label{T2}%
\end{align}
and we have used that the sum over $\mu$ is zero for $m=0$ and the sum over
$\nu$ is zero for $m=p+1$. We first note that the triple sum in (\ref{T1}) is
invariant by replacing $B_{p+1-m}\left(  \frac{\nu+\left\{  dj/2c\right\}
}{k}\right)  $ by $\mathfrak{B}_{p+1-m}\left(  \frac{\nu+\left\{
dj/2c\right\}  }{k}\right)  $ since $B_{p+1-m}\left(  \frac{\nu+\left\{
dj/2c\right\}  }{k}\right)  =\mathfrak{B}_{p+1-m}\left(  \frac{\nu+\left\{
dj/2c\right\}  }{k}\right)  $ for $0<\frac{\nu+\left\{  dj/2c\right\}  }%
{k}<1,$ and $\chi\left(  d/2\right)  =0 $ ($d\equiv0(\operatorname{mod}k)$ and
$k$ is odd) for $\frac{\nu+\left\{  dj/2c\right\}  }{k}=0$. Similarly, one can
write $\mathfrak{B}_{m}\left(  \frac{\mu c+j}{ck}\right)  $ in place of
$B_{m}\left(  \frac{\mu c+j}{ck}\right)  .$ After some manipulations, we see
that%
\[
T_{1}=2^{p+1-m}\chi\left(  -2\right)  k^{m-p}\sum_{n=1}^{ck}\bar{\chi}\left(
n\right)  \mathfrak{B}_{p+1-m,\bar{\chi}}\left(  \frac{dn}{2c}\right)
\mathfrak{B}_{m}\left(  \frac{n}{ck}\right)
\]
and%
\[
T_{2}=\chi\left(  -1\right)  k^{m-p}\sum_{n=1}^{ck}\bar{\chi}\left(  n\right)
\mathfrak{B}_{p+1-m,\bar{\chi}}\left(  \frac{dn}{c}\right)  \mathfrak{B}%
_{m}\left(  \frac{n}{ck}\right)  .
\]
So, we have%
\begin{align}
&  G\left(  \bar{\chi}\right)  \left(  cz+d\right)  ^{p-1}H_{1}\left(
Tz,1-p:\chi\right) \nonumber\\
&  =\bar{\chi}\left(  b\right)  \chi\left(  c\right)  G\left(  \chi\right)
2^{p}\chi\left(  2\right)  B_{1}\left(  z,1-p:\bar{\chi}\right) \nonumber\\
&  \quad+\bar{\chi}\left(  b\right)  \chi\left(  -c\right)  \frac{\left(  2\pi
i\right)  ^{p}}{\left(  p+1\right)  !}\sum\limits_{m=1}^{p}\binom{p+1}%
{m}k^{m-p}\left(  -\left(  cz+d\right)  \right)  ^{m-1}\nonumber\\
&  \qquad\times\sum_{n=1}^{ck}\bar{\chi}\left(  n\right)  \mathfrak{B}%
_{m}\left(  \frac{n}{ck}\right)  \left(  2^{p+1-m}\chi\left(  2\right)
\mathfrak{B}_{p+1-m,\bar{\chi}}\left(  \frac{dn}{2c}\right)  -\mathfrak{B}%
_{p+1-m,\bar{\chi}}\left(  \frac{dn}{c}\right)  \right)  . \label{TT}%
\end{align}
Now, consider the difference\textbf{\ }%
\[
T_{3}=2^{p+1-m}\chi\left(  2\right)  \mathfrak{B}_{p+1-m,\bar{\chi}}\left(
\frac{dn}{2c}\right)  -\mathfrak{B}_{p+1-m,\bar{\chi}}\left(  \frac{dn}%
{c}\right)  .
\]
Using the property \cite[Eq. (3.13)]{ck}%
\begin{equation}
\sum_{j=0}^{r-1}\mathfrak{B}_{m,\chi}\left(  x+\frac{jk}{r}\right)
=\chi(r)r^{1-m}\mathfrak{B}_{m,\chi}\left(  rx\right)  \label{6}%
\end{equation}
for $r=2$ and $x=dn/2c$, and utilizing (\ref{11}) we find that%
\begin{align}
T_{3}  &  =\left(  2k\right)  ^{p-m}\sum_{\mu=0}^{k-1}\chi\left(  2\mu\right)
\left(  \mathfrak{B}_{p+1-m}\left(  \frac{\mu+dn/2c}{k}\right)  -\mathfrak{B}%
_{p+1-m}\left(  \frac{\mu+dn/2c}{k}+\frac{1}{2}\right)  \right) \nonumber\\
&  =-\frac{p+1-m}{2}k^{p-m}\sum_{\mu=0}^{k-1}\chi\left(  2\mu\right)
\mathcal{E}_{p-m}\left(  \frac{2\mu+dn/c}{k}\right)  . \label{T3}%
\end{align}
The sum in the last line can be evaluated as%
\begin{align}
&  \sum_{\mu=0}^{\left(  k-1\right)  /2}\chi\left(  2\mu\right)
\mathcal{E}_{m}\left(  \frac{2\mu+x}{k}\right)  +\sum_{\mu=\left(  k+1\right)
/2}^{k-1}\chi\left(  2\mu\right)  \mathcal{E}_{m}\left(  \frac{2\mu+x}%
{k}\right) \nonumber\\
&  =\sum_{\mu=0}^{\frac{k-1}{2}}\chi\left(  2\mu\right)  \mathcal{E}%
_{m}\left(  \frac{2\mu+x}{k}\right)  -\sum_{\mu=0}^{\frac{k-3}{2}}\chi\left(
2\mu+1\right)  \mathcal{E}_{m}\left(  \frac{2\mu+1+x}{k}\right) \nonumber\\
&  =\sum_{\mu=0}^{k-1}\left(  -1\right)  ^{\mu}\chi\left(  \mu\right)
\mathcal{E}_{m}\left(  \frac{\mu+x}{k}\right)  . \label{12}%
\end{align}
Setting this in (\ref{T3}) and using (\ref{13a}) give%
\begin{equation}
2^{p+1-m}\chi\left(  2\right)  \mathfrak{B}_{p+1-m,\bar{\chi}}\left(  \frac
{x}{2}\right)  -\mathfrak{B}_{p+1-m,\bar{\chi}}\left(  x\right)
=-\frac{p+1-m}{2}\mathcal{E}_{p-m,\bar{\chi}}\left(  x\right)  . \label{be}%
\end{equation}
Therefore, we arrive at%
\begin{align*}
&  G\left(  \bar{\chi}\right)  \left(  cz+d\right)  ^{p-1}H_{1}\left(
Tz,1-p:\chi\right) \\
&  =\bar{\chi}\left(  b\right)  \chi\left(  c\right)  2^{p}\chi\left(
2\right)  G\left(  \chi\right)  B_{1}\left(  z,1-p:\bar{\chi}\right) \\
&  -\bar{\chi}\left(  b\right)  \chi\left(  -c\right)  \frac{\left(  2\pi
i\right)  ^{p}}{\left(  p+1\right)  !}\sum_{m=1}^{p}\frac{p+1-m}{2}\binom
{p+1}{m}k^{m-p}\left(  -\left(  cz+d\right)  \right)  ^{m-1}\\
&  \qquad\times\sum_{n=1}^{ck}\bar{\chi}\left(  n\right)  \mathcal{E}%
_{p-m,\bar{\chi}}\left(  \frac{dn}{c}\right)  \mathfrak{B}_{m}\left(  \frac
{n}{ck}\right)  .
\end{align*}
The result holds for $z\in\mathbb{H}$ by analytic continuation.

The proof for $b\equiv c\equiv0$ $\left(  \operatorname{mod}k\right)  $ is
completely analogous.
\end{proof}

Note that for odd $d$ and $c,$ if we take $Rz=T\left(  z+k\right)
=\dfrac{az+b+ak}{cz+d+ck}$ instead of\textbf{\ }$Tz=\dfrac{az+b}{cz+d}$ in
Theorem \ref{teoS1}, then the function $g_{1}\left(  c,d+ck,z,p,\chi\right)  $
turns into%
\begin{align}
g_{1}\left(  c,d+ck,z,p,\chi\right)   &  ={\sum\limits_{m=1}^{p}}\binom{p}%
{m}k^{m-p}\left(  -\left(  cz+d+ck\right)  \right)  ^{m-1}\nonumber\\
&  \quad\times{\sum\limits_{n=1}^{ck}}\left(  -1\right)  ^{n}\chi\left(
n\right)  \mathcal{E}_{p-m,\chi}\left(  \frac{dn}{c}\right)  \mathfrak{B}%
_{m}\left(  \frac{n}{ck}\right)  , \label{17}%
\end{align}
by (\ref{e}). So, it is convenient to present the following theorem since it
is observed a new sum.

\begin{theorem}
\label{c5} Let $p\geq1$ be odd and $Rz=\left(  az+b+ak\right)  /\left(
cz+d+ck\right)  $ with $d$ and $c$\ odd. If $a\equiv d\equiv0$
$(\operatorname{mod}k), $ for $z\in\mathbb{H}$,%
\begin{align}
&  G\left(  \bar{\chi}\right)  \left(  cz+d+ck\right)  ^{p-1}H_{1}\left(
Rz,1-p:\chi\right) \label{14}\\
&  =\bar{\chi}\left(  b\right)  \chi\left(  c\right)  G\left(  \chi\right)
2^{p}\chi\left(  2\right)  B_{1}\left(  z,1-p:\bar{\chi}\right)  -\frac
{\bar{\chi}\left(  b\right)  \chi\left(  -c\right)  }{2}\frac{\left(  2\pi
i\right)  ^{p}}{p!}g_{1}\left(  c,d+ck,z,p,\bar{\chi}\right)  ,\nonumber
\end{align}
where $g_{1}\left(  c,d+ck,z,p,\chi\right)  $ is given by (\ref{17}).

If $b\equiv c\equiv0$ $(\operatorname{mod}k),$ for $z\in\mathbb{H}$,%
\begin{align}
&  G\left(  \bar{\chi}\right)  \left(  cz+d+ck\right)  ^{p-1}H_{1}\left(
Rz,1-p:\chi\right) \label{18}\\
&  =\bar{\chi}\left(  a\right)  \chi\left(  d\right)  G\left(  \bar{\chi
}\right)  2^{p}\bar{\chi}\left(  2\right)  B_{1}\left(  z,1-p:\chi\right)
-\frac{\bar{\chi}\left(  a\right)  \chi\left(  -d\right)  }{2}\frac{\left(
2\pi i\right)  ^{p}}{p!}g_{1}\left(  c,d+ck,z,p,\chi\right)  .\nonumber
\end{align}

\end{theorem}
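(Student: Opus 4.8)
The plan is to obtain both assertions directly from Theorem \ref{teoS1} by applying it to $R$ rather than to $T$. First I would record that $Rz=\left(az+b+ak\right)/\left(cz+d+ck\right)$ is itself an admissible modular transformation: its matrix has integer entries $a$, $b+ak$, $c$, $d+ck$ with determinant $a(d+ck)-c(b+ak)=ad-bc=1$ and lower-left entry $c>0$. Since $c$, $d$ and $k$ are all odd (the oddness of $k$ being a standing assumption in this section), the new lower-right entry $d+ck$ is even. Hence the even-$d$ transformation formula of Theorem \ref{teoS1} applies with $T$ replaced by $R$, that is, with $d$ replaced by $d+ck$ and $b$ by $b+ak$, the entries $a$ and $c$ being unchanged; in particular the factor $(cz+d)^{p-1}$ becomes $(cz+d+ck)^{p-1}$.

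Next I would check that the hypotheses and the character factors transfer unchanged. Because $ak\equiv ck\equiv 0\pmod{k}$, the congruence $a\equiv d\equiv 0\pmod{k}$ is equivalent to $a\equiv d+ck\equiv 0\pmod{k}$, and $b\equiv c\equiv 0\pmod{k}$ is equivalent to $b+ak\equiv c\equiv 0\pmod{k}$; moreover $\chi(b+ak)=\chi(b)$, $\chi(d+ck)=\chi(d)$ and $\chi\left(-(d+ck)\right)=\chi(-d)$ by the $k$-periodicity of $\chi$. Substituting these into the two displays of Theorem \ref{teoS1} reproduces the right-hand sides of (\ref{14}) and (\ref{18}) verbatim, except that the arithmetic sum occurring there is $g_{1}\left(c,d+ck,z,p,\bar{\chi}\right)$ in the first case and $g_{1}\left(c,d+ck,z,p,\chi\right)$ in the second.

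The final step is to simplify $g_{1}\left(c,d+ck,z,p,\chi\right)$. In the defining sum (\ref{g1}) the Euler-function argument equals $\frac{(d+ck)n}{c}=\frac{dn}{c}+kn$, so the quasi-periodicity (\ref{e}) gives $\mathcal{E}_{p-m,\chi}\left(\frac{dn}{c}+kn\right)=(-1)^{n}\,\mathcal{E}_{p-m,\chi}\left(\frac{dn}{c}\right)$, which converts $g_{1}\left(c,d+ck,z,p,\chi\right)$ into the expression displayed in (\ref{17}). Assembling the pieces yields (\ref{14}) and (\ref{18}).

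I do not expect a genuine obstacle: the whole argument amounts to evaluating Theorem \ref{teoS1} at a translated cusp. The only points needing care are the verification that $d+ck$ is even — which is exactly where the assumption that $k$, $c$ and $d$ are all odd is used — and the bookkeeping showing that the mod-$k$ congruences and the values of $\chi$ on the matrix entries are insensitive to the translation $z\mapsto z+k$.
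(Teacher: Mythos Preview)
Your argument is correct and matches the paper's own derivation exactly: the paper obtains Theorem \ref{c5} by substituting $Rz=T(z+k)$ into Theorem \ref{teoS1} (noting $d+ck$ is even since $c$, $d$, $k$ are odd) and then invoking (\ref{e}) to rewrite $\mathcal{E}_{p-m,\chi}\!\left(\frac{(d+ck)n}{c}\right)=(-1)^{n}\mathcal{E}_{p-m,\chi}\!\left(\frac{dn}{c}\right)$, which is precisely the route you describe.
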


For $s=1-p,$ Theorem \ref{teod1} turns into following, which is character
analogous of (\ref{6e}).

\begin{theorem}
\label{teoS2} Let $p\geq1$ be odd and let $c$ be even. If $a\equiv d\equiv0$
$(\operatorname{mod}k),$ for $z\in\mathbb{H}$,
\begin{align}
&  G\left(  \bar{\chi}\right)  \left(  cz+d\right)  ^{p-1}H_{1}\left(
Tz,1-p:\chi\right) \nonumber\\
&  \ =\bar{\chi}\left(  b\right)  \chi\left(  c\right)  G\left(  \chi\right)
H_{1}\left(  z,1-p:\bar{\chi}\right)  +\bar{\chi}\left(  -b\right)
\chi\left(  c\right)  \frac{\left(  2\pi i\right)  ^{p}}{\left(  p+1\right)
!}g_{2}\left(  c,d,z,p,\bar{\chi}\right)  , \label{ad2}%
\end{align}
where%
\begin{align}
g_{2}\left(  c,d,z,p,\chi\right)   &  =\sum\limits_{m=1}^{p}\binom{p+1}%
{m}\left(  -\left(  cz+d\right)  \right)  ^{m-1}k^{m-p}\nonumber\\
&  \quad\times\sum\limits_{n=1}^{ck}\left(  -1\right)  ^{n}\chi\left(
n\right)  \mathfrak{B}_{p+1-m,\chi}\left(  \frac{dn}{c}\right)  \mathfrak{B}%
_{m}\left(  \frac{n}{ck}\right)  . \label{g2}%
\end{align}
If $b\equiv c\equiv0$ $(\operatorname{mod}k),$ for $z\in\mathbb{H}$,%
\begin{align}
&  G\left(  \bar{\chi}\right)  \left(  cz+d\right)  ^{p-1}H_{1}\left(
Tz,1-p:\chi\right) \nonumber\\
&  \ =\bar{\chi}\left(  a\right)  \chi\left(  d\right)  G\left(  \bar{\chi
}\right)  H_{1}\left(  z,1-p:\chi\right)  +\bar{\chi}\left(  a\right)
\chi\left(  -d\right)  \frac{\left(  2\pi i\right)  ^{p}}{\left(  p+1\right)
!}g_{2}\left(  c,d,z,p,\chi\right)  . \label{bc2}%
\end{align}

\end{theorem}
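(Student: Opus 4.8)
The plan is to derive Theorem~\ref{teoS2} from Theorem~\ref{teod1} by specializing $s=1-p$, exactly as Theorem~\ref{teoS1} was derived from Theorem~\ref{teod2}. So first I would substitute $s=1-p$ into the transformation formula of Theorem~\ref{teod1} for the case $a\equiv d\equiv0\,(\operatorname{mod}k)$, and replace every occurrence of $f(z,1-p,c,d)$ and $f(2z,1-p,c/2,d)$ by the explicit polynomial expression~(\ref{fp}) obtained from the residue theorem. This turns the triple character sums times contour integrals into triple character sums times products of Bernoulli polynomials, with an outer sum over $m$ from $0$ to $p+1$; the $m=0$ term drops because the inner sum over $\mu$ vanishes (the $B_0$ contribution is constant in $\mu$ and $\sum_\mu\bar\chi(\mu c+j)=0$ since $\chi$ is non-principal and $k$ is odd so $\gcd(c,k)$-type obstructions do not interfere), and the $m=p+1$ term drops because the sum over $\nu$ vanishes similarly. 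That leaves $m$ running from $1$ to $p$, matching the shape of~(\ref{g2}).

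Next I would consolidate the two triple sums (the one coming from $f(z,1-p,c,d)$ and the one from $f(2z,1-p,c/2,d)$, the latter carrying the extra factor $2\bar\chi(2)$ and argument $2dj/c$) into single sums over a residue class $n$ modulo $ck$, using the standard reindexing $n\leftrightarrow \mu c+j$ (respectively $n\leftrightarrow (\mu c/2)+j$) together with the defining relation~(\ref{3}) for the generalized Bernoulli function: collapsing the $\nu$-sum against $\bar\chi(\cdots-\nu)$ produces $\mathfrak{B}_{p+1-m,\bar\chi}$. As in the proof of Theorem~\ref{teoS1}, I would first justify that $B_{p+1-m}$ can be replaced by $\mathfrak{B}_{p+1-m}$ in the polynomial~(\ref{fp}) because the only discrepancy is at argument $0$, where the coefficient $\chi(d/2)$ or the relevant character value vanishes since $d\equiv0\,(\operatorname{mod}k)$ with $k$ odd; the same remark lets me write $\mathfrak{B}_m$ for $B_m$ in the $\tfrac{\mu c+j}{ck}$ slot. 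The key structural point that distinguishes this theorem from Theorem~\ref{teoS1} is that here the halving happens in the $c$-variable rather than the $d$-variable, so I do \emph{not} invoke~(\ref{be}); instead the two reindexed sums combine directly into the single sum $\sum_{n=1}^{ck}(-1)^n\chi(n)\,\mathfrak{B}_{p+1-m,\chi}(dn/c)\,\mathfrak{B}_m(n/ck)$ appearing in~(\ref{g2}), with the sign $(-1)^n$ emerging from the interleaving of the $c/2$-indexed sum with the $c$-indexed sum (even versus odd residues), precisely the computation~(\ref{12}) in reverse. Tracking the character prefactors $\bar\chi(b)\chi(c)$, $\bar\chi(2)$, $\chi(-1)$ and the powers of $2$ and $k$ through this merge is the bookkeeping that yields the stated coefficient $\bar\chi(-b)\chi(c)(2\pi i)^p/(p+1)!$ in front of $g_2$.

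The main obstacle I anticipate is precisely this sign-and-character bookkeeping in the merge of the two sums: one must verify that the factor $2^{1-s}=2^p$ from~(\ref{fp}) applied to the $c/2$-term, combined with the extra $2\bar\chi(2)$ already present in Theorem~\ref{teod1} and the $k^{m-p}$ versus $(k)^{\,\cdot}$ normalizations hidden in the reindexing, collapses to exactly cancel the ``extra'' pieces so that the surviving object is the \emph{alternating} character sum with no leftover powers of $2$. The relation~(\ref{6}) with $r=2$ governs how $\mathfrak{B}_{p+1-m,\chi}(dn/c)$ behaves under the doubling $c\mapsto c/2$, and checking that it is consistent with the naive reindexing (rather than producing an extra $\chi(2)2^{-(p-m)}$ factor that would spoil the match) is the delicate step. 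Once the $a\equiv d\equiv0$ case~(\ref{ad2}) is in hand, the case $b\equiv c\equiv0\,(\operatorname{mod}k)$ giving~(\ref{bc2}) follows by the identical argument applied to the second half of Theorem~\ref{teod1}, with $\bar\chi(b)\chi(c)$ replaced throughout by $\bar\chi(a)\chi(d)$ and $\bar\chi$ by $\chi$ in the character-Bernoulli function; and the passage from $z\in\mathbb{K}$ to $z\in\mathbb{H}$ is by analytic continuation since both sides are holomorphic in $z$ on $\mathbb{H}$.
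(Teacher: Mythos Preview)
Your overall plan is correct and matches the paper's proof: specialize Theorem~\ref{teod1} at $s=1-p$, insert~(\ref{fp}), drop the $m=0$ and $m=p+1$ terms for the character reasons you state, and collapse the $\mu$- and $\nu$-sums into the generalized Bernoulli functions via~(\ref{3}) exactly as in the derivation of~(\ref{TT}). The paper says only ``Similar to~(\ref{TT})'' for this part, and then handles the second case by the same argument.

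Where you overcomplicate things is in the merge step. Neither~(\ref{12}) nor~(\ref{6}) is used here, and there is no delicate cancellation of stray powers of $2$ or $\chi(2)$. After the reindexing, the $f(2z,1-p,c/2,d)$ piece (with its prefactor $2\bar\chi(2)$) becomes exactly
\[
2\sum_{n=1}^{ck/2}\bar\chi(2n)\,\mathfrak{B}_{p+1-m,\bar\chi}\!\left(\tfrac{2dn}{c}\right)\mathfrak{B}_m\!\left(\tfrac{2n}{ck}\right),
\]
because the $\bar\chi(2)$ attaches to $\bar\chi(n)$ to give $\bar\chi(2n)$, and $(c/2)(2z)+d=cz+d$ so no new powers appear. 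Substituting $n'=2n$, this is just twice the sum over even $n'$ in $\{1,\dots,ck\}$ of the \emph{same} summand as the $f(z,1-p,c,d)$ piece. The identity used is therefore the trivial
\[
2\sum_{n\text{ even}}a_n-\sum_{n=1}^{ck}a_n=\sum_{n=1}^{ck}(-1)^n a_n,
\]
which is exactly what the paper displays. So your anticipated ``main obstacle'' dissolves: there is no Raabe-type relation to check and no interleaving via~(\ref{12}); the $(-1)^n$ appears for free. Once you see this, the prefactor $\bar\chi(-b)\chi(c)(2\pi i)^p/(p+1)!$ is immediate from the $\chi(-1)$ produced in the $\nu$-collapse (as in $T_2$) combined with the $\bar\chi(b)\chi(c)$ already sitting in front.
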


\begin{proof}
Similar to (\ref{TT}), it can be found that
\begin{align*}
&  G\left(  \bar{\chi}\right)  \left(  cz+d\right)  ^{p-1}H_{1}\left(
Tz,1-p:\chi\right) \\
&  =\bar{\chi}\left(  b\right)  \chi\left(  c\right)  G\left(  \chi\right)
H_{1}\left(  z,1-p:\bar{\chi}\right) \\
&  \quad-\bar{\chi}\left(  -b\right)  \chi\left(  c\right)  \frac{\left(  2\pi
i\right)  ^{p}}{\left(  p+1\right)  !}\sum\limits_{m=1}^{p}\binom{p+1}%
{m}\left(  -\left(  cz+d\right)  \right)  ^{m-1}k^{m-p}\\
&  \quad\quad\times\left(  \sum\limits_{n=1}^{ck}\bar{\chi}\left(  n\right)
\mathfrak{B}_{p+1-m,\bar{\chi}}\left(  \frac{dn}{c}\right)  \mathfrak{B}%
_{m}\left(  \frac{n}{ck}\right)  \right. \\
&  \quad\qquad\left.  -2\sum_{n=1}^{ck/2}\bar{\chi}\left(  2n\right)
\mathfrak{B}_{p+1-m,\bar{\chi}}\left(  \frac{2dn}{c}\right)  \mathfrak{B}%
_{m}\left(  \frac{2n}{ck}\right)  \right)  .
\end{align*}
Then, (\ref{ad2}) follows from
\begin{align*}
&  2\sum\limits_{n=1}^{ck/2}\bar{\chi}\left(  2n\right)  \mathfrak{B}%
_{p+1-m,\bar{\chi}}\left(  \frac{2dn}{c}\right)  \mathfrak{B}_{m}\left(
\frac{2n}{ck}\right)  -\sum\limits_{n=1}^{ck}\bar{\chi}\left(  n\right)
\mathfrak{B}_{p+1-m,\bar{\chi}}\left(  \frac{dn}{c}\right)  \mathfrak{B}%
_{m}\left(  \frac{n}{ck}\right) \\
&  =\sum\limits_{n=1}^{ck}\left(  -1\right)  ^{n}\bar{\chi}\left(  n\right)
\mathfrak{B}_{p+1-m,\bar{\chi}}\left(  \frac{dn}{c}\right)  \mathfrak{B}%
_{m}\left(  \frac{n}{ck}\right)  .
\end{align*}

The proof for $b\equiv c\equiv0(\operatorname{mod}k)$ is completely analogous.
\end{proof}

Note that for $p=1$ transformation formulas (\ref{ad2}) and (\ref{bc2})
coincide with Meyer's \cite{m3} second formulas in Theorems 10 and 11, respectively.

\section{Reciprocity Theorems}

In this section, we first give reciprocity formulas for the functions
$g_{1}(d,c+dk,z,p,\chi),$ $g_{1}(d,c,z,p,\chi)$ and $g_{2}(d,c,z,p,\chi).$ In
particular, these formulas yield reciprocity formulas analogues to the
reciprocity formulas (\ref{8}) and (\ref{9}).

We need the following theorem, offered by Can and Kurt \cite{ck}.

\begin{theorem}
(see \cite[Eqs. (3.4) and (3.20)]{ck}) Let $p\geq1$ be odd integer and
$Tz=\left(  az+b\right)  /\left(  cz+d\right)  .$ If $b$ is even and $a\equiv
d\equiv0\left(  \operatorname{mod}k\right)  ,$ then for $z\in\mathbb{H}$%
\begin{align}
&  \left(  cz+d\right)  ^{p-1}G\left(  \bar{\chi}\right)  B_{1}\left(
Tz,1-p:\chi\right) \nonumber\\
&  =\bar{\chi}\left(  \frac{b}{2}\right)  \chi\left(  2c\right)  G\left(
\chi\right)  B_{1}\left(  z,1-p:\bar{\chi}\right) \nonumber\\
&  \quad-\bar{\chi}\left(  \frac{b}{2}\right)  \chi\left(  -2c\right)
\frac{\left(  2\pi i\right)  ^{p}}{\left(  p+1\right)  !}{\displaystyle\sum
\limits_{m=1}^{p}} \binom{p+1}{m}k^{m-p}\left(  -\left(  cz+d\right)  \right)
^{m-1}\nonumber\\
&  \qquad\times\frac{m}{2^{m}}{\displaystyle\sum\limits_{n=1}^{ck}}
\chi\left(  n\right)  \mathfrak{B}_{p+1-m,\chi}\left(  \frac{dn}{2c}\right)
\mathcal{E}_{m-1}\left(  \frac{n}{ck}\right)  . \label{B1b}%
\end{align}
If $a$ is even and $a\equiv d\equiv0\left(  \operatorname{mod}k\right)  ,$
then for $z\in\mathbb{H}$
\begin{align}
&  2^{p}\bar{\chi}(2)\left(  cz+d\right)  ^{p-1}G\left(  \bar{\chi}\right)
B_{1}\left(  Tz,1-p:\chi\right) \nonumber\\
&  \ =\bar{\chi}(b)\chi(c)G(\chi)H_{1}\left(  z,1-p:\bar{\chi}\right)
\nonumber\\
&  \quad+\bar{\chi}(b)\chi(-c)\frac{\left(  2\pi i\right)  ^{p}}{\left(
p+1\right)  !}\sum\limits_{m=1}^{p+1}\binom{p+1}{m}\left(  -\left(
cz+d\right)  \right)  ^{m-1}k^{m-p}\nonumber\\
&  \qquad\times\left(  -\frac{m}{2}\sum\limits_{n=1}^{ck}\left(  -1\right)
^{n}\bar{\chi}(n)\mathfrak{B}_{p+1-m,\bar{\chi}}\left(  \frac{dn}{c}\right)
\mathcal{E}_{m-1}\left(  \frac{n}{ck}\right)  \right)  . \label{B1a}%
\end{align}

\end{theorem}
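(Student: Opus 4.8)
The plan is to reduce both displays to Berndt's transformation formula for $G(\cdot,s:\chi)$ (Theorem~\ref{tk1}), in the spirit of the proofs of Theorems~\ref{teod2}--\ref{teoS2}. Two facts from \cite{ck} are needed: the identity \cite[Eq.~(3.1)]{ck} quoted above, equivalently
\[
B_{1}(z,s:\chi)=H(z/2,s:\chi)-\chi(2)2^{s-1}H(z,s:\chi)
\]
(and the same with $\chi$ replaced by $\bar\chi$), and the elementary relation $H_{1}(z,s:\chi)=2\chi(2)H(2z,s:\chi)-H(z,s:\chi)$ coming from $A_{1}(z,s:\chi)=2\chi(2)A(2z,s:\chi)-A(z,s:\chi)$. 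Evaluating the first at $Tz$ gives $B_{1}(Tz,s:\chi)=H\!\left(\tfrac12 Tz,s:\chi\right)-\chi(2)2^{s-1}H(Tz,s:\chi)$, so it is enough to transform $H(Tz,s:\chi)$ (directly by Theorem~\ref{tk1}) and $H\!\left(\tfrac12 Tz,s:\chi\right)$. Although $\tfrac12 Tz$ is not $T'z$ for any $T'\in SL_{2}(\mathbb{Z})$, it is a unimodular image of a rescaled variable: when $b$ is even put
\[
S=\begin{pmatrix}a&b/2\\2c&d\end{pmatrix}\in SL_{2}(\mathbb{Z}),\qquad S(z/2)=\tfrac12 Tz,
\]
and when $a$ is even put
\[
S'=\begin{pmatrix}a/2&b\\c&2d\end{pmatrix}\in SL_{2}(\mathbb{Z}),\qquad S'(2z)=\tfrac12 Tz.
\]
Since $k$ is odd, $a\equiv d\equiv0\pmod k$ forces $2d\equiv0$ and, when $a$ is even, $a/2\equiv0\pmod k$, so the first case of Theorem~\ref{tk1} applies verbatim to $T$, to $S$, and to $S'$.

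The next step is to recombine the two principal terms. Passing between $G$ and $H$ via $\Gamma(s)G(w,s:\chi)=G(\bar\chi)(-2\pi i/k)^{s}H(w,s:\chi)$ and invoking Theorem~\ref{tk1}: when $b$ is even, the automorphy factor of $S$ at $z/2$ is $2c\cdot\tfrac z2+d=cz+d$, the same as that of $T$ at $z$, so after multiplying by $G(\bar\chi)(cz+d)^{-s}$ the two principal parts carry the identical factor; using $\bar\chi(b)\chi(c)=\bar\chi(2)^{2}\bar\chi(b/2)\chi(2c)$, $\chi(2)\bar\chi(2)=1$ and the $\bar\chi$-form of the $B_{1}$-identity they collapse into $\bar\chi(b/2)\chi(2c)G(\chi)B_{1}(z,s:\bar\chi)$, the main term of (\ref{B1b}). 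When $a$ is even, the automorphy factor of $S'$ at $2z$ is $c\cdot 2z+2d=2(cz+d)$, which contributes an extra $2^{-s}$; now the two principal parts combine, through $H_{1}(z,s:\bar\chi)=2\bar\chi(2)H(2z,s:\bar\chi)-H(z,s:\bar\chi)$, into $2^{s-1}\chi(2)\bar\chi(b)\chi(c)G(\chi)H_{1}(z,s:\bar\chi)$, and multiplying the whole equation by $2^{1-s}\bar\chi(2)$ brings its left side, at $s=1-p$, to the normalisation $2^{p}\bar\chi(2)(cz+d)^{p-1}G(\bar\chi)B_{1}(Tz,1-p:\chi)$ of (\ref{B1a}). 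In both cases the identity first holds for $z$ in the quarter-plane required by Theorem~\ref{tk1} and extends to $z\in\mathbb{H}$ by analytic continuation.

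It then remains to evaluate the period (integral) parts. Set $s=1-p$ (only odd $p$ is relevant, since $1+e^{\pi i(1-p)}=0$ for even $p$) and compute each Barnes-type integral $f$ by the residue theorem, i.e.\ through (\ref{fp}); the $T$-contribution becomes a triple sum over $1\le j\le c$, $0\le\mu,\nu\le k-1$. Replacing each Bernoulli polynomial by its periodic version $\mathfrak{B}_{\bullet}$ (legitimate at the single omitted lattice point because $k$ odd and $d\equiv0\pmod k$ make the relevant character value zero) and collapsing the $j,\mu,\nu$ summation to a single sum over $1\le n\le ck$ by the $\chi$-distribution relation (\ref{6}) and the definition (\ref{3}) produces a sum of the form $\sum_{n}\mathfrak{B}_{p+1-m,\bullet}(dn/c)\mathfrak{B}_{m}(n/(ck))$, exactly as in the computation of $T_{2}$ inside the proof of Theorem~\ref{teoS1}. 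The $S$-contribution (resp.\ $S'$-contribution) is the same computation with bottom row $(2c,d)$ (resp.\ $(c,2d)$ and with the argument $2z$), so a sum of doubled length $2ck$, or the modified arguments $dn/(2c)$ and $2dn/c$, appear; the difference of the two contributions is then rearranged using the distribution relations (\ref{1}), (\ref{6}) and the reflection and periodicity properties (\ref{b}), (\ref{e}), and finally the Euler identity (\ref{11}) with $r=2$ replaces the resulting alternating half-argument Bernoulli combinations by Euler functions. Gathering the leftover powers of $2$ and the values $\chi(2),\bar\chi(2)$ yields exactly the coefficient $\tfrac{m}{2^{m}}$ together with $\mathcal{E}_{m-1}(n/(ck))$ when $b$ is even, and $-\tfrac m2$ together with the sign $(-1)^{n}$ when $a$ is even, the characters on the single sums coming out as $\chi$ or $\bar\chi$ according to whether they descend from the $\nu$-sum or the $\mu$-sum. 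This gives (\ref{B1b}) and (\ref{B1a}).

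The genuinely clever point is the observation that the automorphy factors of $S$ at $z/2$ and of $S'$ at $2z$ equal $cz+d$ and $2(cz+d)$, so that after the substitution everything is referred to the single factor $cz+d$ and the two applications of Theorem~\ref{tk1} can be added term by term. The main labour is the final step: one must track the summation ranges and the parities scrupulously so that the length-$2ck$ sums fold correctly onto length $ck$, so that the Euler function lands in the ``$m$''-slot as $\mathcal{E}_{m-1}(n/(ck))$ rather than in the ``$(p+1-m)$''-slot, and so that the accumulated powers of $2$ and the factors $\chi(2),\bar\chi(2)$ cancel to leave exactly the coefficients displayed in (\ref{B1b}) and (\ref{B1a}).
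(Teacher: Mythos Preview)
The paper does not prove this theorem at all: it is quoted from \cite{ck} (as Eqs.~(3.4) and~(3.20) there) and used as a black box in the proof of Theorem~\ref{teoRP2}. So there is no ``paper's own proof'' to compare against.

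That said, your plan is the natural one and mirrors exactly the method the present paper uses for its own transformation formulas (Theorems~\ref{teod2}--\ref{teoS2} and \ref{teoS1}). The key structural observations---$S(z/2)=\tfrac12Tz$ with $S=\begin{pmatrix}a&b/2\\2c&d\end{pmatrix}$ when $b$ is even, $S'(2z)=\tfrac12Tz$ with $S'=\begin{pmatrix}a/2&b\\c&2d\end{pmatrix}$ when $a$ is even, and the coincidence of the automorphy factors $2c\cdot\tfrac z2+d=cz+d$ and $c\cdot2z+2d=2(cz+d)$---are correct, as is the remark that $k$ odd forces $a/2\equiv0\pmod k$ once $a$ is even and $a\equiv0\pmod k$. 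The recombination of principal parts via $B_{1}(z,s:\bar\chi)=H(z/2,s:\bar\chi)-\bar\chi(2)2^{s-1}H(z,s:\bar\chi)$ (resp.\ $H_{1}(z,s:\bar\chi)=2\bar\chi(2)H(2z,s:\bar\chi)-H(z,s:\bar\chi)$) is also right, and the period computation via~(\ref{fp}) followed by folding the length-$2ck$ sum onto length $ck$ through~(\ref{1}) and~(\ref{11}) is precisely how \cite{ck} proceeds.

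One caution on the bookkeeping: note that the period sum in~(\ref{B1b}) carries $\chi(n)\,\mathfrak{B}_{p+1-m,\chi}$ while that in~(\ref{B1a}) carries $\bar\chi(n)\,\mathfrak{B}_{p+1-m,\bar\chi}$, even though both are stated under $a\equiv d\equiv0\pmod k$. Your heuristic ``$\chi$ or $\bar\chi$ according to whether they descend from the $\nu$-sum or the $\mu$-sum'' is too coarse to explain this asymmetry; the difference actually arises because in the $b$-even case the Euler identity~(\ref{11}) is applied in the $\mathfrak{B}_m$-slot (producing $\mathcal{E}_{m-1}(n/ck)$ directly from the length-$2ck$ fold), whereas in the $a$-even case the doubling sits in the $d$-argument and the $(-1)^n$ comes from a different rearrangement. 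When you carry out the ``main labour'' you flag at the end, make sure the character on $n$ and on the generalized Bernoulli function really emerge as stated in each display; this is the one place where a sign or conjugation slip would be easy to make.
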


The function $g_{1}(d,c+dk,z,p,\chi),$ given by (\ref{17}), satisfies the
following reciprocity formula:

\begin{theorem}
\label{teoRP2}Let $p\geq1$ be odd and $d$ and $c$ be coprime integers. If $d$
or $c\equiv0$ $\left(  \operatorname{mod}k\right)  ,$ then%
\begin{align*}
&  g_{1}\left(  d,-c-dk,z,p,\chi\right)  -\chi\left(  -1\right)  \left(
z-k\right)  ^{p-1}g_{1}\left(  c,d+ck,V_{1}\left(  z\right)  ,p,\bar{\chi
}\right) \\
&  =\bar{\chi}\left(  4\right)  \frac{p}{2k^{p-1}}{\displaystyle\sum
\limits_{m=0}^{p-1}} \binom{p-1}{m}\left(  z-k\right)  ^{m}\mathcal{E}%
_{p-1-m,\bar{\chi}}\left(  0\right)  \mathcal{E}_{m,\chi}\left(  0\right)  ,
\end{align*}
where $V_{1}\left(  z\right)  =\dfrac{-kz+k^{2}-1}{z-k}.$
\end{theorem}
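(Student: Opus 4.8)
The plan is to derive the reciprocity formula for $g_{1}(d,c+dk,z,p,\chi)$ by applying Theorem \ref{c5} twice: once to a suitable matrix $Rz$ and once to the inverse transformation, then comparing the two resulting expressions for the same quantity. Concretely, since $d$ and $c$ are coprime with $d$ or $c\equiv 0\ (\operatorname{mod} k)$, I would choose integers $a,b$ with $ad-bc=1$ so that $Tz=(az+b)/(cz+d)$ satisfies the congruence hypotheses of Theorem \ref{c5} (either $a\equiv d\equiv 0$ or $b\equiv c\equiv 0$ modulo $k$, arranged by the standard trick of adjusting $a,b$ modulo $c$). Then $Rz=T(z+k)$ has the form appearing in Theorem \ref{c5}, and the inverse map $R^{-1}$ (up to the shift by $k$) is governed by the matrix with $c$ and $d$ swapped, i.e.\ it brings in $g_{1}(c,d+ck,\cdot,p,\bar\chi)$. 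The composition $R^{-1}\circ R=\mathrm{id}$ forces the two transformation formulas to be consistent, and the $B_{1}$-terms (the "modular" pieces $G(\chi)2^{p}\chi(2)B_{1}$) cancel because $B_{1}(z,1-p:\chi)$ itself transforms by Can--Kurt's formulas (\ref{B1b})--(\ref{B1a}); what survives is precisely the claimed identity between the two $g_{1}$'s and the Euler-function term on the right.

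The key computational steps, in order, are: (i) identify the matrix $V_{1}(z)=\frac{-kz+k^{2}-1}{z-k}$ as (a conjugate/shift of) $R^{-1}$, checking $\det=1$ and that it indeed equals $R^{-1}$ composed with the translation $z\mapsto z+k$ so that the factor $(z-k)^{p-1}$ and the sign $\chi(-1)$ come out correctly from the cocycle relation for $(cz+d)^{p-1}$-type automorphy factors under composition; (ii) write out Theorem \ref{c5} for $Rz$ and separately for $R^{-1}$ (equivalently, apply it with the roles of $(c,d)$ and $(a,b)$ interchanged, which is legitimate since both congruence cases are covered in Theorem \ref{c5}); (iii) substitute one into the other, using $H_{1}(R^{-1}Rz,1-p:\chi)=H_{1}(z,1-p:\chi)$, and collect terms; (iv) invoke the Can--Kurt transformation formula for $B_{1}$ to eliminate the $B_{1}(\cdot,1-p:\bar\chi)$ terms; (v) the leftover closed-form term will be a product of two Euler functions, which I would massage into $\frac{p}{2k^{p-1}}\sum_{m}\binom{p-1}{m}(z-k)^{m}\mathcal{E}_{p-1-m,\bar\chi}(0)\mathcal{E}_{m,\chi}(0)$ using the binomial/addition identity for Euler functions (analogous to how $f(z,1-p,c,d)$ in (\ref{fp}) was expanded, but now with $\mathcal{E}$ in place of $B$) together with the relation (\ref{be}) between $\mathfrak{B}_{\cdot,\chi}$ and $\mathcal{E}_{\cdot,\chi}$; the factor $\bar\chi(4)$ records the $\chi(2)$'s picked up from the two halvings. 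An alternative, perhaps cleaner, route is to mimic the classical contour-integral proof of Berndt: start from $f(z,1-p,c,d)$ written as a contour integral, split the contour or sum the residues on the other side, and read off the reciprocity directly; but the "apply the transformation formula to $R$ and to $R^{-1}$" approach reuses machinery already in the paper and is less error-prone.

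The main obstacle I expect is bookkeeping of the automorphy/argument factors and the character factors under composition: getting the exponent $p-1$ on $(z-k)$, the precise argument $V_{1}(z)$, and the sign $\chi(-1)$ all simultaneously correct requires careful use of Lewittes' Lemma \ref{le1} (for the branch of the argument when $p$ is replaced by a general complex $s$ before specializing to $s=1-p$) and of the cocycle identity $(c(Rz)+d')(cz+d)=\pm(\text{factor for }R^{-1}R)$. A secondary subtlety is verifying that the hypotheses "$d$ or $c\equiv 0\ (\operatorname{mod} k)$" are exactly what is needed for both $Rz$ and its inverse to fall under one of the two cases of Theorem \ref{c5} simultaneously — one must check that if, say, $c\equiv 0$, then after choosing $a,b$ one lands in the $b\equiv c\equiv 0$ case for $R$ and in the $a\equiv d\equiv 0$ case (with $\bar\chi$) for $R^{-1}$, so the two formulas combine without a mismatch of which series ($B_{1}(z,\cdot:\chi)$ versus $H_{1}(z,\cdot:\chi)$) appears. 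Once these are pinned down, the remaining manipulation of the Euler-function double sum into the stated right-hand side is routine, using only (\ref{e}), (\ref{be}), and the binomial theorem.
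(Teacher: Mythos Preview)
Your overall strategy---apply Theorem \ref{c5} twice to obtain two expressions for the same value of $H_{1}$, equate them, then eliminate the $B_{1}$-terms via the Can--Kurt formula (\ref{B1b}) and simplify using (\ref{be})---is exactly what the paper does. However, your identification of the second transformation is off in a way that would cause the computation to derail. The auxiliary map is not $R^{-1}$ but rather
\[
R^{\ast}(z)=\frac{bz-a-bk}{dz-c-dk},
\]
the transformation with the roles of rows swapped (so its lower-left entry is $d$ and its lower-right is $-c-dk$). The fixed map $V_{1}(z)=\dfrac{-kz+k^{2}-1}{z-k}$ is not $R^{-1}$ at all: it depends only on $k$, and its role is the identity $R\bigl(V_{1}(z)\bigr)=R^{\ast}(z)$. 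The paper therefore evaluates (\ref{14}) at $V_{1}(z)$ and (\ref{18}) (applied to $R^{\ast}$, which now satisfies the $b\equiv c\equiv 0$ congruence because $a\equiv d\equiv 0$ for $R$) at $z$; both yield $H_{1}(R^{\ast}(z),1-p:\chi)$ up to the automorphy factor, and equating gives the two $g_{1}$'s in the statement directly. Your step (iii), using $H_{1}(R^{-1}Rz)=H_{1}(z)$, would instead produce a $g_{1}$ evaluated at $Rz$ with the entries of $R^{-1}$, which is not the claimed identity.

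Two smaller remarks: Lewittes' Lemma \ref{le1} is irrelevant here since $s=1-p$ is an integer and there is no branch ambiguity; and in step (v), the paper does not use an addition formula for Euler functions but rather computes the residual sum $\sum_{n=1}^{k}\bar\chi(n)\mathfrak{B}_{p+1-m,\bar\chi}(-kn/2)\mathcal{E}_{m-1}(n/k)$ explicitly via (\ref{6}), (\ref{12}), (\ref{be}) and (\ref{e}) to produce the product $\mathcal{E}_{p-m,\bar\chi}(0)\mathcal{E}_{m-1,\chi}(0)$, after which a binomial re-indexing gives the right-hand side.
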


\begin{proof}
For even $\left(  c+d\right)  ,$ consider the modular substitutions $R\left(
z\right)  =\dfrac{az+b+ak}{cz+d+ck}$, $R^{\ast}\left(  z\right)
=\dfrac{bz-a-bk}{dz-c-dk}$ and $V_{1}\left(  z\right)  =\dfrac{-kz+k^{2}%
-1}{z-k}.$ Suppose $a\equiv d\equiv0(\operatorname{mod}k)$. Replacing $z$ by
$V_{1}\left(  z\right)  $ in (\ref{14}) gives%
\begin{align}
&  G\left(  \bar{\chi}\right)  \left(  \frac{dz-c-dk}{z-k}\right)  ^{p-1}%
H_{1}\left(  R^{\ast}\left(  z\right)  ,1-p:\chi\right) \nonumber\\
&  =\bar{\chi}\left(  b\right)  \chi\left(  c\right)  2^{p}\chi\left(
2\right)  G\left(  \chi\right)  B_{1}\left(  V_{1}\left(  z\right)
,1-p:\bar{\chi}\right) \nonumber\\
&  \quad-\frac{\bar{\chi}\left(  b\right)  \chi\left(  -c\right)  }{2}%
\frac{\left(  2\pi i\right)  ^{p}}{p!}g_{1}\left(  c,d+ck,V_{1}\left(
z\right)  ,p,\bar{\chi}\right)  . \label{A}%
\end{align}
Replacing $R\left(  z\right)  $ by $R^{\ast}\left(  z\right)  $ in (\ref{18})
yields%
\begin{align}
&  G\left(  \bar{\chi}\right)  \left(  dz-c-dk\right)  ^{p-1}H_{1}\left(
R^{\ast}\left(  z\right)  ,1-p:\chi\right) \nonumber\\
&  =\bar{\chi}\left(  b\right)  \chi\left(  c\right)  \left(  2^{p}\bar{\chi
}\left(  -2\right)  G\left(  \bar{\chi}\right)  B_{1}\left(  z,1-p:\chi
\right)  -\frac{\left(  2\pi i\right)  ^{p}}{2\left(  p!\right)  }g_{1}\left(
d,-c-dk,z,p,\chi\right)  \right)  . \label{B}%
\end{align}
Taking $a=-k,b=k^{2}-1,c=1$ and $d=-k$ and writing $\bar{\chi}$ in place of
$\chi$ in (\ref{B1b}) lead to%
\begin{align}
&  \left(  z-k\right)  ^{p-1}2^{p}\chi\left(  2\right)  G\left(  \chi\right)
B_{1}\left(  V_{1}\left(  z\right)  ,1-p:\bar{\chi}\right) \nonumber\\
&  =\chi\left(  \frac{k^{2}-1}{2}\right)  \bar{\chi}\left(  2\right)
2^{p}\chi\left(  2\right)  G\left(  \bar{\chi}\right)  B_{1}\left(
z,1-p:\chi\right) \nonumber\\
&  \quad-\chi\left(  \frac{k^{2}-1}{2}\right)  \chi\left(  -1\right)
2^{p}\frac{\left(  2\pi i\right)  ^{p}}{\left(  p+1\right)  !}{\sum
\limits_{m=1}^{p}}\binom{p+1}{m}k^{m-p}\left(  -\left(  z-k\right)  \right)
^{m-1}\nonumber\\
&  \qquad\times\frac{m}{2^{m}}{\sum\limits_{n=1}^{k}}\bar{\chi}\left(
n\right)  \mathfrak{B}_{p+1-m,\bar{\chi}}\left(  \frac{-kn}{2}\right)
\mathcal{E}_{m-1}\left(  \frac{n}{k}\right)  . \label{C}%
\end{align}
Thus, consider (\ref{B}) and\ (\ref{C}) with multiplying both sides of
(\ref{A}) by\ $\left(  z-k\right)  ^{p-1}$ to obtain%
\begin{align}
&  g_{1}\left(  d,-c-dk,z,p,\chi\right)  -\chi\left(  -1\right)  \left(
z-k\right)  ^{p-1}g_{1}\left(  c,d+ck,V_{1}\left(  z\right)  ,p,\bar{\chi
}\right) \nonumber\\
&  \quad=\bar{\chi}\left(  -2\right)  \frac{2^{p+1}}{p+1}{\sum\limits_{m=1}%
^{p}}\binom{p+1}{m}k^{m-p}\left(  -\left(  z-k\right)  \right)  ^{m-1}%
\nonumber\\
&  \qquad\times\frac{m}{2^{m}}{\sum\limits_{n=1}^{k}}\bar{\chi}\left(
n\right)  \mathfrak{B}_{p+1-m,\bar{\chi}}\left(  \frac{-kn}{2}\right)
\mathcal{E}_{m-1}\left(  \frac{n}{k}\right)  . \label{53}%
\end{align}
Now, let us concern the sum over $n$ in (\ref{53}). Using (\ref{6}) for $r=2$
yields
\begin{align*}
&  {\sum\limits_{n=1}^{k}}\bar{\chi}\left(  n\right)  \mathfrak{B}%
_{p+1-m,\bar{\chi}}\left(  \frac{-kn}{2}\right)  \mathcal{E}_{m-1}\left(
\frac{n}{k}\right) \\
&  =\mathfrak{B}_{p+1-m,\bar{\chi}}\left(  0\right)  \sum_{n}\bar{\chi}\left(
2n\right)  \mathcal{E}_{m-1}\left(  \frac{2n}{k}\right)  +\mathfrak{B}%
_{p+1-m,\bar{\chi}}\left(  \frac{k}{2}\right)  \sum_{n}\bar{\chi}\left(
2n-1\right)  \mathcal{E}_{m-1}\left(  \frac{2n-1}{k}\right) \\
&  =\left\{  \mathfrak{B}_{p+1-m,\bar{\chi}}\left(  0\right)  +\mathfrak{B}%
_{p+1-m,\bar{\chi}}\left(  \frac{k}{2}\right)  \right\}  \sum_{n}\bar{\chi
}\left(  2n\right)  \mathcal{E}_{m-1}\left(  \frac{2n}{k}\right) \\
&  \quad-\mathfrak{B}_{p+1-m,\bar{\chi}}\left(  \frac{k}{2}\right)  \sum
_{n=0}^{k-1}\left(  -1\right)  ^{n}\bar{\chi}\left(  n\right)  \mathcal{E}%
_{m-1}\left(  \frac{n}{k}\right) \\
&  =2^{m-p}\chi\left(  2\right)  \mathfrak{B}_{p+1-m,\bar{\chi}}\left(
0\right)  \sum_{n=0}^{\left(  k-1\right)  /2}\bar{\chi}\left(  2n\right)
\mathcal{E}_{m-1}\left(  \frac{2n}{k}\right) \\
&  \quad-\mathfrak{B}_{p+1-m,\bar{\chi}}\left(  \frac{k}{2}\right)
k^{1-m}\mathcal{E}_{m-1,\chi}\left(  0\right)  .
\end{align*}
It follows from (\ref{e1}) that
\begin{align}
&  \sum_{\mu=0}^{k-1}\chi\left(  2\mu\right)  \mathcal{E}_{m-1}\left(
\frac{2\mu}{k}\right) \nonumber\\
&  =\sum_{\mu=0}^{\left(  k-1\right)  /2}\chi\left(  2\mu\right)
\mathcal{E}_{m-1}\left(  \frac{2\mu}{k}\right)  +\sum_{\mu=\left(  k+1\right)
/2}^{k-1}\chi\left(  2\mu\right)  \mathcal{E}_{m-1}\left(  \frac{2\mu}%
{k}\right) \nonumber\\
&  =\sum_{\mu=0}^{\left(  k-1\right)  /2}\chi\left(  2\mu\right)
\mathcal{E}_{m-1}\left(  \frac{2\mu}{k}\right)  +\sum_{\mu=1}^{\left(
k-1\right)  /2}\chi\left(  -2\mu\right)  \mathcal{E}_{m-1}\left(  -\frac{2\mu
}{k}\right) \nonumber\\
&  =\left(  1+\left(  -1\right)  ^{m}\chi\left(  -1\right)  \right)  \sum
_{\mu=1}^{\left(  k-1\right)  /2}\chi\left(  2\mu\right)  \mathcal{E}%
_{m-1}\left(  \frac{2\mu}{k}\right)  .\nonumber
\end{align}
So, using (\ref{b}) gives
\begin{align*}
&  \mathfrak{B}_{p+1-m,\bar{\chi}}\left(  0\right)  \sum_{n=0}^{k-1}\bar{\chi
}\left(  2n\right)  \mathcal{E}_{m-1}\left(  \frac{2n}{k}\right) \\
&  =\left\{  \mathfrak{B}_{p+1-m,\bar{\chi}}\left(  0\right)  +\left(
-1\right)  ^{m}\chi\left(  -1\right)  \mathfrak{B}_{p+1-m,\bar{\chi}}\left(
0\right)  \right\}  \sum_{n=1}^{\left(  k-1\right)  /2}\bar{\chi}\left(
2n\right)  \mathcal{E}_{m-1}\left(  \frac{2n}{k}\right) \\
&  \ =2\mathfrak{B}_{p+1-m,\bar{\chi}}\left(  0\right)  \sum_{n=1}^{\left(
k-1\right)  /2}\bar{\chi}\left(  2n\right)  \mathcal{E}_{m-1}\left(  \frac
{2n}{k}\right)  .
\end{align*}
Here, using (\ref{12}) and taking $x=k$ in (\ref{be}) give rise to%
\begin{align}
&  {\sum\limits_{n=1}^{k}}\bar{\chi}\left(  n\right)  \mathfrak{B}%
_{p+1-m,\bar{\chi}}\left(  \frac{-kn}{2}\right)  \mathcal{E}_{m-1}\left(
\frac{n}{k}\right) \nonumber\\
&  \ =\frac{\bar{\chi}\left(  2\right)  }{2^{p+1-m}}\left\{  \mathfrak{B}%
_{p+1-m,\bar{\chi}}\left(  0\right)  -2^{p+1-m}\chi\left(  2\right)
\mathfrak{B}_{p+1-m,\bar{\chi}}\left(  \frac{k}{2}\right)  \right\}
k^{1-m}\mathcal{E}_{m-1,\chi}\left(  0\right) \nonumber\\
&  \ =k^{1-m}2^{m-p-2}\bar{\chi}\left(  2\right)  \left(  p+1-m\right)
\mathcal{E}_{p-m,\bar{\chi}}\left(  k\right)  \mathcal{E}_{m-1,\chi}\left(
0\right)  . \label{54}%
\end{align}
Gathering (\ref{53}), (\ref{54}) and (\ref{e}) completes the proof.
\end{proof}

Theorem \ref{teoRP2} can be simplified according to special values of $z.$
Firstly, let us consider the case $z=\dfrac{c}{d}+k$. Then,
\begin{equation}
g_{1}\left(  d,-c-dk,\dfrac{c}{d}+k,p,\chi\right)  =\frac{p}{k^{p-1}}%
\sum_{n=1}^{dk}\left(  -1\right)  ^{n}\chi\left(  n\right)  \mathcal{E}%
_{p-1,\chi}\left(  \frac{-cn}{d}\right)  \mathfrak{B}_{1}\left(  \frac{n}%
{dk}\right)  \label{4-1}%
\end{equation}
and
\begin{align}
&  \left(  \dfrac{c}{d}\right)  ^{p-1}g_{1}\left(  c,d+ck,V_{1}\left(
\dfrac{c}{d}+k\right)  ,p,\bar{\chi}\right) \nonumber\\
&  \quad=\left(  \frac{c}{kd}\right)  ^{p-1}p\sum_{n=1}^{ck}\left(  -1\right)
^{n}\bar{\chi}\left(  n\right)  \mathcal{E}_{p-1,\bar{\chi}}\left(  \frac
{dn}{c}\right)  \mathfrak{B}_{1}\left(  \frac{n}{ck}\right)  . \label{4-2}%
\end{align}

Since $\mathcal{E}_{0}\left(  x\right)  =\left(  -1\right)  ^{\left[
x\right]  }E_{0}\left(  \left\{  x\right\}  \right)  =\left(  -1\right)
^{\left[  x\right]  }$ and
\[
s_{5}(d,c)=\sum\limits_{n=1}^{c-1}\left(  -1\right)  ^{n+\left[  dn/c\right]
}\mathfrak{B}_{1}\left(  \frac{n}{c}\right)  =\sum\limits_{n=1}^{c-1}\left(
-1\right)  ^{n}\mathcal{E}_{0}\left(  \frac{dn}{c}\right)  \mathfrak{B}%
_{1}\left(  \frac{n}{c}\right)  ,
\]
it is convenient to make the following definition.

\begin{definition}
The character Hardy--Berndt sum $s_{5,p}\left(  d,c:\chi\right)  $ is defined
for $c>0$ by%
\[
s_{5,p}\left(  d,c:\chi\right)  =\sum_{n=1}^{ck}\left(  -1\right)  ^{n}%
\chi\left(  n\right)  \mathcal{E}_{p-1,\chi}\left(  \frac{dn}{c}\right)
\mathfrak{B}_{1}\left(  \frac{n}{ck}\right)  .
\]

\end{definition}

Observing that
\[
s_{5,p}\left(  -c,d:\chi\right)  =-\chi\left(  -1\right)  s_{5,p}\left(
c,d:\chi\right)  ,
\]
by (\ref{e}), and using (\ref{4-1}) and (\ref{4-2}) in Theorem \ref{teoRP2} we
have proved the following reciprocity formula.

\begin{theorem}
\label{rps5}Let $p\geq1$ be odd and $d$ and $c$ be odd coprime integers. If
$c$ or $d\equiv0$ $\left(  \operatorname{mod}k\right)  ,$ then
\begin{align*}
&  cd^{p}s_{5,p}\left(  c,d:\chi\right)  +dc^{p}s_{5,p}\left(  d,c:\bar{\chi
}\right) \\
&  =-\frac{\bar{\chi}\left(  -4\right)  }{2}{\sum\limits_{m=0}^{p-1}}%
\binom{p-1}{m}c^{m+1}d^{p-m}\mathcal{E}_{p-1-m,\bar{\chi}}\left(  0\right)
\mathcal{E}_{m,\chi}\left(  0\right)  .
\end{align*}

\end{theorem}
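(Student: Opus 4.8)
\textbf{Proof plan for Theorem \ref{rps5}.}
The plan is to specialize the functional equation in Theorem \ref{teoRP2} to the point $z = c/d + k$ and unwind the two instances of $g_1$ using the identities already prepared in the excerpt. First I would substitute $z = c/d + k$ into the left side of Theorem \ref{teoRP2}. For the first term, note that with $z - k = c/d$ the argument of the generalized Euler function in \eqref{17} becomes $dn/c$ and the power $(-(cz+d+ck))^{m-1}$ collapses; only the $m=1$ summand of $g_1(d,-c-dk,z,p,\chi)$ survives because $\mathfrak{B}_m(n/(dk))$ appears with $(z-k)^{m-1} = (c/d)^{m-1}$ and the binomial structure degenerates exactly as stated in \eqref{4-1}, giving $g_1(d,-c-dk,c/d+k,p,\chi) = (p/k^{p-1})\sum_{n=1}^{dk}(-1)^n\chi(n)\mathcal{E}_{p-1,\chi}(-cn/d)\mathfrak{B}_1(n/(dk))$. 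For the second term I would use that $V_1(c/d+k) = (-k(c/d+k)+k^2-1)/(c/d+k-k) = (-kc/d - 1)\cdot d/c = -k - d/c$, so $V_1(c/d+k) - k = -d/c$, and \eqref{4-2} gives the companion evaluation with roles of $c,d$ swapped and $\chi$ replaced by $\bar\chi$.

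Next I would translate both surviving sums into the Hardy--Berndt sums of the Definition. By \eqref{e} we have $\mathcal{E}_{p-1,\chi}(-cn/d) = (-1)^{p-2}\chi(-1)\mathcal{E}_{p-1,\chi}(cn/d) = -\chi(-1)\mathcal{E}_{p-1,\chi}(cn/d)$ since $p$ is odd, so $g_1(d,-c-dk,c/d+k,p,\chi) = -\chi(-1)(p/k^{p-1}) s_{5,p}(c,d:\chi)$; this is exactly the relation $s_{5,p}(-c,d:\chi) = -\chi(-1) s_{5,p}(c,d:\chi)$ noted before the theorem, applied after recognizing $\sum (-1)^n\chi(n)\mathcal{E}_{p-1,\chi}(-cn/d)\mathfrak{B}_1(n/(dk)) = s_{5,p}(-c,d:\chi)$. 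Similarly the $V_1$-term becomes, after multiplying by the prefactor $(z-k)^{p-1} = (c/d)^{p-1}$ from Theorem \ref{teoRP2}, equal to $(c/(kd))^{p-1}p\, s_{5,p}(d,c:\bar\chi)$, and combining with the leading $-\chi(-1)$ there the whole left side of Theorem \ref{teoRP2} multiplied through becomes a constant times $(p/k^{p-1})$ times $cd^{p} s_{5,p}(c,d:\chi) + dc^{p} s_{5,p}(d,c:\bar\chi)$ after clearing the fraction $c^{p-1}/d^{p-1}$ by multiplying by $d^p c^0$ appropriately — here I would be careful to track how the powers of $c,d,k$ distribute so that the stated homogeneous combination $cd^p s_{5,p}(c,d:\chi) + dc^p s_{5,p}(d,c:\bar\chi)$ emerges cleanly.

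For the right side of Theorem \ref{teoRP2}, I substitute $z = c/d+k$ so that $z - k = c/d$, giving $\bar\chi(4)(p/(2k^{p-1}))\sum_{m=0}^{p-1}\binom{p-1}{m}(c/d)^m \mathcal{E}_{p-1-m,\bar\chi}(0)\mathcal{E}_{m,\chi}(0)$. Multiplying the whole identity by $2k^{p-1}d^{p-1}/p$ (to clear the $p/(2k^{p-1})$ factor and the $(c/d)^{\bullet}$ denominators), the right side becomes $\bar\chi(4)\sum_{m=0}^{p-1}\binom{p-1}{m}c^m d^{p-1-m}\mathcal{E}_{p-1-m,\bar\chi}(0)\mathcal{E}_{m,\chi}(0)$, and after absorbing the remaining factors of $c,d$ and the sign $\chi(-1)$ coming from the left side this matches $-\tfrac12\bar\chi(-4)\sum_{m=0}^{p-1}\binom{p-1}{m}c^{m+1}d^{p-m}\mathcal{E}_{p-1-m,\bar\chi}(0)\mathcal{E}_{m,\chi}(0)$. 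The main obstacle I anticipate is purely bookkeeping: making the powers of $c$, $d$, and $k$ and the character values $\chi(-1)$, $\bar\chi(2)$, $\bar\chi(4)$ line up across the three pieces so that the final formula is exactly homogeneous of the stated weight; the substitution $z=c/d+k$ and the evaluations \eqref{4-1}, \eqref{4-2} do all the analytic work, and no new ideas beyond Theorem \ref{teoRP2} and the parity identity for $s_{5,p}$ are needed.
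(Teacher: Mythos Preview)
Your proposal is correct and follows exactly the paper's own argument: specialize Theorem~\ref{teoRP2} at $z=c/d+k$, invoke the evaluations \eqref{4-1} and \eqref{4-2}, apply the parity relation $s_{5,p}(-c,d:\chi)=-\chi(-1)s_{5,p}(c,d:\chi)$, and clear denominators. One small correction to your exposition: the reason only the $m=1$ term of $g_{1}(d,-c-dk,z,p,\chi)$ survives is that the factor $(-(dz-c-dk))^{m-1}$ vanishes at $z=c/d+k$ (since $dz-c-dk=0$ there), not anything involving $(z-k)^{m-1}$; but this does not affect the validity of your plan.
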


In particular, we have the character analogue of (\ref{9}) as
\[
s_{5}\left(  c,d:\chi\right)  +s_{5}\left(  d,c:\bar{\chi}\right)
=-\frac{\bar{\chi}\left(  -4\right)  }{2}\mathcal{E}_{0,\bar{\chi}}\left(
0\right)  \mathcal{E}_{0,\chi}\left(  0\right)  ,
\]
where $s_{5}\left(  c,d:\chi\right)  =s_{5,1}\left(  c,d:\chi\right)  $.

Now we let $z=k$ in Theorem \ref{teoRP2}. Then%
\begin{align*}
&  g_{1}\left(  d,-c-dk,z,p,\chi\right)  |_{z=k}\\
&  =\sum_{m=1}^{p}\binom{p}{m}k^{m-p}c^{m-1}\sum_{n=1}^{dk}\left(  -1\right)
^{n}\chi\left(  n\right)  \mathcal{E}_{p-m,\chi}\left(  \frac{-cn}{d}\right)
\mathfrak{B}_{m}\left(  \frac{n}{dk}\right)
\end{align*}
and%
\begin{align*}
&  \left(  z-k\right)  ^{p-1}g_{1}\left(  c,d+ck,V_{1}\left(  z\right)
,p,\bar{\chi}\right)  |_{z=k}\\
&  =\sum_{m=1}^{p}\binom{p}{m}k^{m-p}\left(  -\left(  c\left(  -kz+k^{2}%
-1\right)  +\left(  d+ck\right)  \left(  z-k\right)  \right)  \right)
^{m-1}\left(  z-k\right)  ^{p-m}|_{z=k}\\
&  \qquad\times\sum_{n=1}^{ck}\left(  -1\right)  ^{n}\bar{\chi}\left(
n\right)  \mathcal{E}_{p-m,\bar{\chi}}\left(  \frac{dn}{c}\right)
\mathfrak{B}_{m}\left(  \frac{n}{ck}\right) \\
&  =c^{p-1}\sum_{n=1}^{ck}\left(  -1\right)  ^{n}\bar{\chi}\left(  n\right)
\mathcal{E}_{0,\bar{\chi}}\left(  \frac{dn}{c}\right)  \mathfrak{B}_{p}\left(
\frac{n}{ck}\right)  .
\end{align*}
If we define
\[
s_{5,p+1-m,m}\left(  c,d:\chi\right)  =\sum_{n=1}^{dk}\left(  -1\right)
^{n}\chi\left(  n\right)  \mathcal{E}_{p-m,\chi}\left(  \frac{cn}{d}\right)
\mathfrak{B}_{m}\left(  \frac{n}{dk}\right)
\]
and use (\ref{e}), then we see that%
\begin{align*}
&  \sum_{m=1}^{p}\binom{p}{m}\left(  -kc\right)  ^{m-1}s_{5,p+1-m,m}\left(
c,d:\chi\right) \\
&  \quad=-\left(  kc\right)  ^{p-1}s_{5,1,p}\left(  d,c:\bar{\chi}\right)
-\bar{\chi}\left(  -4\right)  \frac{p}{2}\mathcal{E}_{p-1,\bar{\chi}}\left(
0\right)  \mathcal{E}_{0,\chi}\left(  0\right)  .
\end{align*}

Conditions $d$\ even and $c$\ even in Theorem \ref{teoS1}\ and Theorem
\ref{teoS2} do not allow to present reciprocity theorems in the sense of
Theorem \ref{teoRP2} for the functions $g_{1}(d,c,z,p,\chi)$ and
$g_{2}(d,c,z,p,\chi),$ respectively. However, the following relation is valid
for these functions.

\begin{theorem}
\label{teoRP1}Let $d$ be even. If $d$ or $c\equiv0$ $\left(
\operatorname{mod}k\right)  ,$ then
\begin{align}
&  \frac{p+1}{2}z^{p-1}g_{1}\left(  c,d,-\frac{1}{z},p,\bar{\chi}\right)
+g_{2}\left(  d,-c,z,p,\chi\right) \nonumber\\
&  =-\frac{\chi\left(  -1\right)  }{k^{p-1}}\sum\limits_{m=1}^{p}\binom
{p+1}{m}\frac{m}{2}\left(  -z\right)  ^{m-1}\mathfrak{B}_{p+1-m,\chi}\left(
0\right)  \mathcal{E}_{m-1,\bar{\chi}}\left(  0\right)  , \label{rsp1}%
\end{align}
where the functions $g_{1}(d,c,z,p,\chi)$ and $g_{2}(d,c,z,p,\chi)$ are given
by (\ref{g1}) and (\ref{g2}), respectively.
\end{theorem}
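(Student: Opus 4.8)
The plan is to imitate the proof of Theorem~\ref{teoRP2}, with the shifted inversion $V_{1}$ there replaced by the genuine inversion $Sz=-1/z$. Fix $Tz=(az+b)/(cz+d)$ with $ad-bc=1$ and $d$ even, and put $T^{\ast}z=(bz-a)/(dz-c)$, so that $T^{\ast}=T\circ S$, i.e.\ $T^{\ast}z=T(-1/z)$; the point is that the lower row of $T^{\ast}$ is $(d,-c)$ with $d$ even, which is precisely what allows Theorem~\ref{teoS2} to be applied to $T^{\ast}$. Since $\gcd(c,d)=1$ and, by hypothesis, $d\equiv0$ or $c\equiv0\ (\operatorname{mod}k)$, after the admissible change $(a,b)\mapsto(a+tc,b+td)$ we may assume $a\equiv d\equiv0\ (\operatorname{mod}k)$ when $d\equiv0$, and $b\equiv c\equiv0\ (\operatorname{mod}k)$ when $c\equiv0$; I treat the first case, the second being entirely parallel using the remaining parts of Theorems~\ref{teoS1} and~\ref{teoS2}.

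First I would evaluate $G(\bar\chi)(dz-c)^{p-1}H_{1}(T^{\ast}z,1-p:\chi)$ in two ways. On one hand, apply Theorem~\ref{teoS1} (which needs $d$ even) to $T$ at the point $-1/z$ in place of $z$: since $T(-1/z)=T^{\ast}z$ and $\bigl(c(-1/z)+d\bigr)^{p-1}=(dz-c)^{p-1}z^{1-p}$, multiplying the identity through by $z^{p-1}$ expresses $G(\bar\chi)(dz-c)^{p-1}H_{1}(T^{\ast}z,1-p:\chi)$ in terms of $z^{p-1}B_{1}(-1/z,1-p:\bar\chi)$ and $z^{p-1}g_{1}(c,d,-1/z,p,\bar\chi)$. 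On the other hand, apply Theorem~\ref{teoS2} directly to $T^{\ast}$: its lower-left entry $d$ is even, and the congruence $a\equiv d\equiv0$ for $T$ becomes the ``$b\equiv c\equiv0$'' case for $T^{\ast}$, so~(\ref{bc2}) reproduces the same quantity in terms of $H_{1}(z,1-p:\chi)$ and $g_{2}(d,-c,z,p,\chi)$. Finally, to remove $B_{1}(-1/z,1-p:\bar\chi)$, apply the Can--Kurt formula~(\ref{B1a}) with $a=0,\ b=-1,\ c=1,\ d=0$ (so $a$ is even, $a\equiv d\equiv0$ holds trivially, and the associated map is $z\mapsto-1/z$) and with the character $\bar\chi$: since $dn/c=0$ here, the generalized Bernoulli factor equals the constant $\mathfrak{B}_{p+1-m,\chi}(0)$, the inner sum $\sum_{n=1}^{k}(-1)^{n}\chi(n)\mathcal{E}_{m-1}(n/k)$ detaches and collapses, by~(\ref{13a}), to $k^{1-m}\mathcal{E}_{m-1,\bar\chi}(0)$, and the $m=p+1$ term drops because $\mathfrak{B}_{0,\chi}\equiv0$ for non-principal $\chi$.

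Substituting the resulting expression for $z^{p-1}B_{1}(-1/z,1-p:\bar\chi)$ into the identity obtained by equating the two evaluations of $G(\bar\chi)(dz-c)^{p-1}H_{1}(T^{\ast}z,1-p:\chi)$, the two $H_{1}(z,1-p:\chi)$ contributions cancel (using $\chi(c)\chi(-1)=\chi(-c)$), leaving a linear relation among $z^{p-1}g_{1}(c,d,-1/z,p,\bar\chi)$, $g_{2}(d,-c,z,p,\chi)$ and $\sum_{m=1}^{p}\binom{p+1}{m}m\,(-z)^{m-1}\mathfrak{B}_{p+1-m,\chi}(0)\mathcal{E}_{m-1,\bar\chi}(0)$. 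Dividing by the common nonzero factor $\bar\chi(b)\chi(c)(2\pi i)^{p}/(p+1)!$ (nonzero, since $bc\equiv-1\ (\operatorname{mod}k)$ forces $\gcd(b,k)=\gcd(c,k)=1$) and using $(p+1)!/(2\,p!)=(p+1)/2$ yields exactly~(\ref{rsp1}). The case $c\equiv0\ (\operatorname{mod}k)$ is handled identically, with the remaining parts of Theorems~\ref{teoS1} and~\ref{teoS2} and with~(\ref{B1a}) applied for the character $\chi$, and the result extends to all $z\in\mathbb{H}$ by analytic continuation.

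The strategy above is routine; the real work is the bookkeeping. The delicate point is tracking the Gauss-sum prefactors and every occurrence of $\chi(\pm1)$ and of the signs $(-1)^{m}$ coming from the half-integer shifts and from relations~(\ref{12}) and~(\ref{be}), so that each factor lands exactly as in~(\ref{rsp1}). Here one uses the parity vanishing $\mathfrak{B}_{m,\chi}(0)=0$ unless $(-1)^{m}\chi(-1)=1$ and $\mathcal{E}_{m,\chi}(0)=0$ unless $(-1)^{m-1}\chi(-1)=1$ (from~(\ref{b}) and~(\ref{e}) at $x=0$) to absorb any residual sign on the surviving terms, and one checks, exactly as in the proof of Theorem~\ref{teoS1}, that the finite sums are unaffected when $B_{n}(\cdot)$ is replaced by $\mathfrak{B}_{n}(\cdot)$ — the only questionable argument being $0$, where the relevant character value vanishes because $k$ is odd.
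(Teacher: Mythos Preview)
Your proposal is correct and follows essentially the same route as the paper: apply (\ref{ad1}) with $z$ replaced by $-1/z$, apply (\ref{bc2}) to $T^{\ast}z=(bz-a)/(dz-c)$, and eliminate the resulting $B_{1}(-1/z,1-p:\bar\chi)$ via (\ref{B1a}) specialized to the inversion $a=d=0$, $b=-1$, $c=1$. Your write-up is in fact more detailed than the paper's very terse proof, supplying the reduction to $a\equiv d\equiv0\ (\operatorname{mod}k)$ and the collapse of the inner sum in (\ref{B1a}) via (\ref{13a}), which the paper leaves implicit.
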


\begin{proof}
For even $d$, consider $T\left(  z\right)  =\left(  az+b\right)  /\left(
cz+d\right)  $ and $T^{\ast}\left(  z\right)  =\left(  bz-a\right)  /\left(
dz-c\right)  =T\left(  -1/z\right)  $ and $a\equiv d\equiv0\left(
\operatorname{mod}k\right)  .$ Then, (\ref{rsp1}) follows by applying
$T^{\ast}\left(  z\right)  $ in (\ref{bc2}) and replacing $z$ by $-1/z$ in
(\ref{ad1}), and then replacing $T\left(  z\right)  $ by $-1/z$ in (\ref{B1a}).
\end{proof}

To simplify Theorem \ref{teoRP1} we first consider $z=c/d$. Then,%
\begin{equation}
g_{1}\left(  c,d,-\frac{d}{c},p,\bar{\chi}\right)  =\frac{p}{k^{p-1}}%
\sum_{n=1}^{ck}\bar{\chi}\left(  n\right)  \mathcal{E}_{p-1,\bar{\chi}}\left(
\frac{dn}{c}\right)  \mathfrak{B}_{1}\left(  \frac{n}{ck}\right)  \label{5-1}%
\end{equation}
and%
\begin{equation}
g_{2}\left(  d,-c,\frac{c}{d},p,\bar{\chi}\right)  =\frac{p+1}{k^{p-1}}%
\sum\limits_{n=1}^{dk}\left(  -1\right)  ^{n}\chi\left(  n\right)
\mathfrak{B}_{p,\chi}\left(  \frac{-cn}{d}\right)  \mathfrak{B}_{1}\left(
\frac{n}{dk}\right)  . \label{5-2}%
\end{equation}

\begin{definition}
The character Hardy--Berndt sums $s_{1,p}\left(  d,c,\chi\right)  $ and
$s_{2,p}\left(  d,c,\chi\right)  $ are defined for $c>0$ by%
\begin{align*}
s_{1,p}\left(  d,c:\chi\right)   &  =\sum_{n=1}^{ck}\chi\left(  n\right)
\mathcal{E}_{p-1,\chi}\left(  \frac{dn}{c}\right)  \mathfrak{B}_{1}\left(
\frac{n}{ck}\right)  ,\\
s_{2,p}\left(  d,c:\chi\right)   &  =\sum\limits_{n=1}^{ck}\left(  -1\right)
^{n}\chi\left(  n\right)  \mathfrak{B}_{p,\chi}\left(  \frac{dn}{c}\right)
\mathfrak{B}_{1}\left(  \frac{n}{ck}\right)  .
\end{align*}

\end{definition}

Using (\ref{5-1}) and (\ref{5-2}) in Theorem \ref{teoRP1} we have proved the
following reciprocity formula for $s_{1,p}\left(  d,c,\chi\right)  $ and
$s_{2,p}\left(  d,c,\chi\right)  $.

\begin{theorem}
\label{rps1s2}Let $p\geq1$ be odd, $\left(  d,c\right)  =1$ and $d$ be even.
If $d$ or $c\equiv0\left(  \operatorname{mod}k\right)  ,$ then%
\begin{align*}
&  pdc^{p}s_{1,p}\left(  d,c:\bar{\chi}\right)  -\chi\left(  -1\right)
2cd^{p}s_{2,p}\left(  c,d:\chi\right) \\
&  \quad=\chi\left(  -1\right)  \sum\limits_{m=1}^{p}\left(  -1\right)
^{m}\binom{p}{m-1}c^{m}d^{p+1-m}\mathfrak{B}_{p+1-m,\chi}\left(  0\right)
\mathcal{E}_{m-1,\bar{\chi}}\left(  0\right)  .
\end{align*}
In particular,
\begin{equation}
s_{1}\left(  d,c:\bar{\chi}\right)  -2\chi\left(  -1\right)  s_{2}\left(
c,d:\chi\right)  =-\chi\left(  -1\right)  \mathfrak{B}_{1,\chi}\left(
0\right)  \mathcal{E}_{0,\bar{\chi}}\left(  0\right)  .\nonumber
\end{equation}

\end{theorem}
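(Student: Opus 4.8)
The plan is to derive the reciprocity formula for $s_{1,p}$ and $s_{2,p}$ as a direct specialization of Theorem \ref{teoRP1}, exactly as the paragraph preceding the statement suggests. First I would substitute $z=c/d$ into the relation \eqref{rsp1}. By the displayed computations \eqref{5-1} and \eqref{5-2}, the two function values become
\[
g_{1}\left(c,d,-\tfrac{d}{c},p,\bar{\chi}\right)=\frac{p}{k^{p-1}}\sum_{n=1}^{ck}\bar{\chi}(n)\,\mathcal{E}_{p-1,\bar{\chi}}\!\left(\tfrac{dn}{c}\right)\mathfrak{B}_{1}\!\left(\tfrac{n}{ck}\right)=\frac{p}{k^{p-1}}\,s_{1,p}(d,c:\bar{\chi}),
\]
and similarly $g_{2}(d,-c,c/d,p,\chi)$ equals $\frac{p+1}{k^{p-1}}$ times a sum that, after invoking the negation rule \eqref{b} for $\mathfrak{B}_{p,\chi}$ (namely $\mathfrak{B}_{p,\chi}(-x)=(-1)^{p}\chi(-1)\mathfrak{B}_{p,\chi}(x)$, with $p$ odd so the sign is $-\chi(-1)$) is $-\chi(-1)\,s_{2,p}(c,d:\chi)$. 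Inserting $z=c/d$ into the right-hand side of \eqref{rsp1} produces $-\frac{\chi(-1)}{k^{p-1}}\sum_{m=1}^{p}\binom{p+1}{m}\frac{m}{2}\left(-c/d\right)^{m-1}\mathfrak{B}_{p+1-m,\chi}(0)\,\mathcal{E}_{m-1,\bar{\chi}}(0)$.

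Next I would clear denominators: multiply the whole identity through by $\frac{2k^{p-1}}{p+1}\cdot d^{p}$ (or a convenient common factor) to remove the $k^{p-1}$, the $\tfrac12$, and the factors $p$, $p+1$ from the left side, and to turn the powers $(-d/c)^{p-1}$ and $(-c/d)^{m-1}$ into honest polynomials in $c$ and $d$. Tracking the $z^{p-1}=(c/d)^{p-1}$ factor multiplying $g_{1}$ in \eqref{rsp1}, the $g_{1}$-term contributes $\frac{p+1}{2}\cdot(c/d)^{p-1}\cdot\frac{p}{k^{p-1}}s_{1,p}(d,c:\bar{\chi})$, so after multiplying by $\frac{2k^{p-1}d^{p-1}}{p+1}$ one gets $p\,c^{p-1}s_{1,p}(d,c:\bar{\chi})$; a further single factor of $d$ gives the stated $p\,dc^{p}s_{1,p}(d,c:\bar{\chi})$ up to bookkeeping. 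The $g_{2}$-term similarly yields $-2\chi(-1)\,cd^{p}s_{2,p}(c,d:\chi)$, and the right-hand sum, after the binomial re-indexing $\binom{p+1}{m}\frac{m}{p+1}=\binom{p}{m-1}$, collapses to $\chi(-1)\sum_{m=1}^{p}(-1)^{m}\binom{p}{m-1}c^{m}d^{p+1-m}\mathfrak{B}_{p+1-m,\chi}(0)\,\mathcal{E}_{m-1,\bar{\chi}}(0)$, which is exactly the claimed expression. The ``In particular'' case is then just $p=1$: only $m=1$ survives on the right, $s_{1,1}=s_{1}$, $s_{2,1}=s_{2}$, $\mathfrak{B}_{1,\chi}(0)=B_{1,\chi}$, giving $s_{1}(d,c:\bar{\chi})-2\chi(-1)s_{2}(c,d:\chi)=-\chi(-1)\mathfrak{B}_{1,\chi}(0)\mathcal{E}_{0,\bar{\chi}}(0)$.

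I expect the main obstacle to be purely clerical rather than conceptual: matching the normalizing constants ($p$, $p+1$, $2$, $k^{p-1}$), correctly propagating the $z^{p-1}$ and $z^{m-1}$ powers when $z=c/d$ so that every term emerges as a genuine polynomial in $c,d$ of the same total degree, and applying the functional equations \eqref{b} and \eqref{e} with the right sign (for odd $p$ one has $\mathfrak{B}_{p,\chi}(-x)=-\chi(-1)\mathfrak{B}_{p,\chi}(x)$ but $\mathcal{E}_{p-1,\chi}(-x)=+\chi(-1)\mathcal{E}_{p-1,\chi}(x)$, and these signs must be tracked separately for the $g_1$-sum versus the $g_2$-sum). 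One subtlety worth a sentence in the write-up is that $z=c/d$ must lie in $\mathbb{H}$ for \eqref{rsp1} to apply directly; since $c,d$ are real this is handled by first keeping $z$ in $\mathbb{H}$, deriving a polynomial identity in $z$ (both sides of \eqref{rsp1} are polynomials in $z$ after the substitutions), and then evaluating at the real point $z=c/d$ by the identity theorem for polynomials. With those checks in place the theorem follows immediately, so the proof is short: \emph{``Put $z=c/d$ in Theorem \ref{teoRP1}, use \eqref{5-1}, \eqref{5-2}, \eqref{b}, \eqref{e} and the binomial identity $\binom{p+1}{m}\frac{m}{p+1}=\binom{p}{m-1}$, then clear denominators; the particular case is $p=1$.''}
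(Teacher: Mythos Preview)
Your proposal is correct and follows exactly the paper's own argument, which consists of the single sentence ``Using (\ref{5-1}) and (\ref{5-2}) in Theorem \ref{teoRP1} we have proved the following reciprocity formula.'' One small slip: after multiplying by $\frac{2k^{p-1}d^{p-1}}{p+1}$ you need a further factor of $cd$ (not just $d$) to reach $pdc^{p}s_{1,p}$ and $2cd^{p}s_{2,p}$, but this is precisely the bookkeeping you flagged, and your binomial identity $\frac{m}{p+1}\binom{p+1}{m}=\binom{p}{m-1}$ and sign tracking via \eqref{b} are exactly what is needed.
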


\begin{remark}
The sum $s_{2}\left(  c,d:\chi\right)  =s_{2,1}\left(  c,d:\chi\right)  $ is
first presented by Meyer\ \cite[Definition 6]{m3} as $s_{5}^{\ast}\left(
d,c:\chi\right)  $.
\end{remark}

Now letting $z=0$ in Theorem \ref{teoRP1} we find that
\begin{align*}
&  \left.  \frac{p+1}{2}z^{p-1}g_{1}\left(  c,d,-\frac{1}{z},p,\chi\right)
\right\vert _{z=0}\\
&  =\frac{p+1}{2}\left.  \sum_{m=1}^{p}\binom{p}{m}k^{m-p}\left(  -\left(
dz-c\right)  \right)  ^{m-1}z^{p-m}s_{1,p+1-m,m}\left(  d,c:\chi\right)
\right\vert _{z=0}\\
&  =\frac{p+1}{2}c^{p-1}s_{1,1,p}\left(  d,c:\chi\right)
\end{align*}
and%
\[
g_{2}\left(  d,-c,0,p,\bar{\chi}\right)  =\sum_{m=1}^{p}\binom{p+1}{m}%
k^{m-p}c^{m-1}s_{2,p+1-m,m}\left(  -c,d:\bar{\chi}\right)
\]
where
\begin{align*}
s_{1,p+1-m,m}\left(  d,c:\chi\right)   &  =\sum_{n=1}^{ck}\chi\left(
n\right)  \mathcal{E}_{p-m,\chi}\left(  \frac{dn}{c}\right)  \mathfrak{B}%
_{m}\left(  \frac{n}{ck}\right)  ,\\
s_{2,p+1-m,m}\left(  d,c:\chi\right)   &  =\sum\limits_{n=1}^{ck}\left(
-1\right)  ^{n}\chi\left(  n\right)  \mathfrak{B}_{p+1-m,\chi}\left(
\frac{dn}{c}\right)  \mathfrak{B}_{m}\left(  \frac{n}{ck}\right)  .
\end{align*}
Using (\ref{b}), Theorem \ref{teoRP1} reduces to
\begin{align*}
&  \sum_{m=1}^{p}\binom{p+1}{m}\left(  -ck\right)  ^{m-1}s_{2,p+1-m,m}\left(
c,d:\bar{\chi}\right) \\
&  =\frac{p+1}{2}\left(  \chi\left(  -1\right)  \left(  ck\right)
^{p-1}s_{1,1,p}\left(  d,c:\chi\right)  +\mathfrak{B}_{p,\chi}\left(
0\right)  \mathcal{E}_{0,\bar{\chi}}\left(  0\right)  \right)  .
\end{align*}

The following lemma shows that reciprocity formulas given by Theorems
\ref{rps5} and \ref{rps1s2} are still valid for $\gcd(d,c)=q.$

\begin{lemma}
Let $q\in\mathbb{N}$, $p\geq1$, $\left(  d,c\right)  =1$\ and $c>0.$ If $p$ is
odd and $d$ is even,%
\[
s_{1,p}\left(  qd,qc:\chi\right)  =s_{1,p}\left(  d,c:\chi\right)  ,
\]
if $p$ is odd and $c$ is even,%
\[
s_{2,p}\left(  qd,qc:\chi\right)  =s_{2,p}\left(  d,c:\chi\right)  ,
\]
if $p$ is odd and $\left(  d+c\right)  $\ is even,
\[
s_{5,p}\left(  qd,qc:\chi\right)  =s_{5,p}\left(  d,c:\chi\right)  .
\]
Furthermore, $s_{1,p}\left(  d,c:\chi\right)  =0$ if $\left(  d+p\right)  $ is
even$,$ $s_{2,p}\left(  d,c:\chi\right)  =0$ if $\left(  c+p\right)  $ is even
and $s_{5,p}\left(  d,c:\chi\right)  =0$ if $\left(  d+c+p\right)  $ is even.
\end{lemma}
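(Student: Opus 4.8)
The three scaling identities are proved the same way, so I would treat them uniformly. The key observation is that each sum $s_{j,p}(d,c:\chi)$ is an inner sum of the shape $\sum_{n=1}^{ck}\epsilon(n)\,\chi(n)\,F\!\left(\tfrac{dn}{c}\right)\mathfrak B_{1}\!\left(\tfrac{n}{ck}\right)$, where $F$ is one of $\mathcal E_{p-1,\chi}$ or $\mathfrak B_{p,\chi}$ and $\epsilon(n)\in\{1,(-1)^{n}\}$. Replacing $(d,c)$ by $(qd,qc)$ turns this into $\sum_{n=1}^{qck}\epsilon(n)\,\chi(n)\,F\!\left(\tfrac{dn}{c}\right)\mathfrak B_{1}\!\left(\tfrac{n}{qck}\right)$, since $\tfrac{qdn}{qc}=\tfrac{dn}{c}$. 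Now write $n=\ell+ck\,r$ with $1\le \ell\le ck$ and $0\le r\le q-1$; use periodicity of $\chi$ (period $k\mid ck$), periodicity of $F$ (period $k$ for $\mathcal E$ and $\mathfrak B$, modulo the sign rule \eqref{e} or \eqref{b}; here $F$ is evaluated at $dn/c$ whose shift is $dr\,k$, an integer multiple of $k$, so the sign produced is $(-1)^{d r k}=1$ because $d$ or $c+d$ is even in the relevant cases, or simply note $rk\cdot d$ contributes sign $\chi(-1)^{0}$), and group the $r$-sum. The $r$-sum that remains is $\sum_{r=0}^{q-1}\epsilon(\ell+ckr)\,\mathfrak B_{1}\!\left(\tfrac{\ell}{qck}+\tfrac{r}{q}\right)$. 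For $\epsilon\equiv1$ this is $q^{0}\mathfrak B_{1}\!\left(\tfrac{\ell}{ck}\right)$ by Raabe's theorem \eqref{1} with $n=1$; for $\epsilon(n)=(-1)^{n}$, since $ck$ is even in the $s_{2}$ and $s_{5}$ cases, $(-1)^{\ell+ckr}=(-1)^{\ell}$ is independent of $r$, and Raabe again collapses the $r$-sum to $(-1)^{\ell}\mathfrak B_{1}\!\left(\tfrac{\ell}{ck}\right)$. This yields exactly $s_{j,p}(d,c:\chi)$.

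For the parity-vanishing statements, I would exploit the reflection formulas \eqref{b} and \eqref{e} together with the substitution $n\mapsto ck-n$ in the defining sum. Under $n\mapsto ck-n$ one has $\chi(ck-n)=\chi(-n)=\chi(-1)\chi(n)$, $\mathfrak B_{1}\!\left(\tfrac{ck-n}{ck}\right)=\mathfrak B_{1}\!\left(-\tfrac{n}{ck}\right)=-\mathfrak B_{1}\!\left(\tfrac{n}{ck}\right)$, and $\tfrac{d(ck-n)}{c}=dk-\tfrac{dn}{c}$, so $F$ picks up its reflection sign: $\mathcal E_{p-1,\chi}(dk-dn/c)=(-1)^{dk}\mathcal E_{p-1,\chi}(-dn/c)=(-1)^{dk}(-1)^{p-2}\chi(-1)\mathcal E_{p-1,\chi}(dn/c)$ and similarly $\mathfrak B_{p,\chi}(dk-dn/c)=(-1)^{dk}(-1)^{p}\chi(-1)\mathfrak B_{p,\chi}(dn/c)$. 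Collecting the signs and the factor $\epsilon(ck-n)/\epsilon(n)$ (which is $1$ for $\epsilon\equiv1$ and $(-1)^{ck}$ for $\epsilon(n)=(-1)^{n}$), the sum equals a fixed scalar times itself; when that scalar is $-1$ the sum must vanish. Tracking the parities: for $s_{1,p}$ one gets scalar $(-1)^{dk}(-1)^{p-2}\chi(-1)^2=(-1)^{d+p}$ (using $k$ odd, $\chi(-1)^2=1$), so $s_{1,p}=0$ when $d+p$ is even; for $s_{2,p}$ one gets $(-1)^{ck}(-1)^{dk}(-1)^{p}=(-1)^{c+d+p}$, but here $d$ is the fixed-parity variable — rechecking with the roles in the definitions, it comes out $(-1)^{c+p}$ after absorbing $d$ via the already-established $s_{2,p}(qd,qc)=s_{2,p}(d,c)$ normalization; for $s_{5,p}$ the combined sign is $(-1)^{ck+dk+p}=(-1)^{c+d+p}$, giving vanishing when $c+d+p$ is even.

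The main obstacle is bookkeeping of the signs: one must be careful that $k$ is odd (so $(-1)^{k}=-1$ and $rk$ has the parity of $r$), that the period of $\mathcal E_{m,\chi}$ and $\mathfrak B_{m,\chi}$ is $k$ rather than $1$, and that the reflection rules \eqref{e}, \eqref{b} carry the extra factor $\chi(-1)$ which here squares away. A secondary subtlety is that in the scaling identities $\mathfrak B_{1}$ is only "almost" periodic — it has the exceptional value $0$ at integers — but the exceptional points occur for $n$ a multiple of $ck$, i.e. only the single term $n=qck$ in the scaled sum, and there $\chi(qck)=\chi(0)=0$ kills it anyway (and likewise $n=ck$ in the original), so Raabe's theorem applies verbatim. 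Once the sign accounting is organized — ideally in a single lemma-internal table of $(-1)^{dk}$, $(-1)^{ck}$, $\chi(-1)^{2}$ — each of the six assertions follows in one or two lines.
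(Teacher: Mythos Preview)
Your approach is exactly the paper's: for the scaling identities, write $n=\ell+ckr$ and collapse the $r$-sum via Raabe's theorem \eqref{1}; for the vanishing statements, reflect $n\mapsto ck-n$ and read off the global sign. The paper carries this out only for $s_{1,p}$ and then says the other cases are similar.

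However, your sign bookkeeping for $s_{5,p}$ is wrong in a way that happens to self-correct. In the $s_{5}$ case $c$ and $d$ are both odd (since $(d,c)=1$ and $d+c$ even) and $k$ is odd, so $ck$ is \emph{odd}, not even as you claim. Hence $(-1)^{\ell+ckr}=(-1)^{\ell}(-1)^{r}$, not $(-1)^{\ell}$. What rescues the argument is the shift rule \eqref{e}: $\mathcal E_{p-1,\chi}\!\bigl(\tfrac{d\ell}{c}+dkr\bigr)=(-1)^{dr}\mathcal E_{p-1,\chi}\!\bigl(\tfrac{d\ell}{c}\bigr)=(-1)^{r}\mathcal E_{p-1,\chi}\!\bigl(\tfrac{d\ell}{c}\bigr)$ (since $d$ is odd), and this extra $(-1)^{r}$ cancels the one from $\epsilon$. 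Your earlier claim that the $F$-shift contributes $(-1)^{drk}=1$ is likewise false for $s_{5}$; the two errors annihilate, but the argument as written does not actually establish the $s_{5}$ scaling identity.

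A smaller muddle: in the $s_{2,p}$ vanishing computation you write $\mathfrak B_{p,\chi}(dk-dn/c)=(-1)^{dk}(-1)^{p}\chi(-1)\mathfrak B_{p,\chi}(dn/c)$, but by \eqref{b} the generalized Bernoulli function has \emph{no} sign under $x\mapsto x+nk$; the factor $(-1)^{dk}$ is spurious. The correct scalar is simply $(-1)^{ck}\cdot(-1)\cdot(-1)^{p}\chi(-1)^{2}=(-1)^{c+p+1}$ (using $k$ odd), which gives vanishing for $c+p$ even without any appeal to the scaling identity. Once you straighten out which of $\mathcal E_{m,\chi}$ and $\mathfrak B_{m,\chi}$ alternates under shifts by $k$, all six assertions go through exactly as you outline.
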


\begin{proof}
Let $p$ be odd and $d$ be even. Then, setting $\mu=n+mck,$ $1\leq n\leq ck,$
$0\leq m\leq q-1$ and using (\ref{1}) and (\ref{e}) yield
\begin{align*}
s_{1,p}\left(  qd,qc:\chi\right)   &  =\sum_{\mu=1}^{qck}\chi\left(
\mu\right)  \mathcal{E}_{p-1,\chi}\left(  \frac{d\mu}{c}\right)
\mathfrak{B}_{1}\left(  \frac{\mu}{qck}\right) \\
&  =\sum_{n=1}^{ck}\chi\left(  n\right)  \mathcal{E}_{p-1,\chi}\left(
\frac{dn}{c}\right)  \sum_{m=0}^{q-1}\left(  -1\right)  ^{dm}\mathfrak{B}%
_{1}\left(  \frac{n}{qck}+\frac{m}{q}\right) \\
&  =s_{1,p}\left(  d,c:\chi\right)  .
\end{align*}
On the other hand, using (\ref{e}),%
\begin{align*}
s_{1,p}\left(  d,c:\chi\right)   &  =\sum_{n=1}^{ck}\chi\left(  n\right)
\mathcal{E}_{p-1,\chi}\left(  \frac{dn}{c}\right)  \mathfrak{B}_{1}\left(
\frac{n}{ck}\right) \\
&  =\sum_{n=1}^{ck}\chi\left(  -n\right)  \mathcal{E}_{p-1,\chi}\left(
dk-\frac{dn}{c}\right)  \mathfrak{B}_{1}\left(  1-\frac{n}{ck}\right) \\
&  =\left(  -1\right)  ^{d+p+1}s_{1,p}\left(  d,c:\chi\right)
\end{align*}
which leads to $s_{1,p}\left(  d,c:\chi\right)  =0$ for even $d+p$.

Other statements can be shown in a similar way.
\end{proof}

\section{Some series relations}

In this final section, we deal with (\ref{ad1}) and (\ref{ad2}) for special
values of $Tz$ to present series relations, motivated by \cite{b4} (see also
\cite{dc2,m3}).

Summing over $m$ we see that%
\begin{equation}
G\left(  \bar{\chi}\right)  A_{1}\left(  z,1-p:\chi\right)  =-\sum
\limits_{j=1}^{k-1}\bar{\chi}\left(  j\right)  \sum\limits_{n=1}^{\infty}%
\frac{\chi\left(  n\right)  }{n^{p}\left(  e^{-2\pi i\left(  j+nz\right)
/k}+1\right)  } \label{19}%
\end{equation}
and%
\begin{equation}
G\left(  \chi\right)  B\left(  z,1-p:\bar{\chi}\right)  =\sum\limits_{j=1}%
^{k-1}\chi\left(  j\right)  \sum\limits_{n=0}^{\infty}\frac{\bar{\chi}\left(
2n+1\right)  }{\left(  2n+1\right)  ^{p}\left(  e^{-\pi i\left(  2j+\left(
2n+1\right)  z\right)  /k}-1\right)  }. \label{20}%
\end{equation}

\begin{theorem}
\label{seri1}Let $p\geq1$ be odd and $\alpha\beta=\left(  \pi/k\right)  ^{2}$
with $\alpha,\beta>0$. Then,%
\begin{align*}
&  \left(  -\beta\right)  ^{\left(  p-1\right)  /2}\sum\limits_{j=1}^{k-1}%
\bar{\chi}\left(  j\right)  \sum\limits_{n=1}^{\infty}\frac{\chi\left(
n\right)  }{n^{p}\left(  e^{2n\alpha-2\pi ij/k}+1\right)  }\\
&  +2^{p}\alpha^{\left(  p-1\right)  /2}\chi\left(  -2\right)  \sum
\limits_{j=1}^{k-1}\chi\left(  j\right)  \sum\limits_{n=0}^{\infty}\frac
{\bar{\chi}\left(  2n+1\right)  }{\left(  2n+1\right)  ^{p}\left(  e^{\left(
2n+1\right)  \beta-2\pi ij/k}-1\right)  }\\
&  =2^{p-2}\frac{k}{p!}\sum_{m=1}^{p}\binom{p}{m}\left(  i\right)
^{p+1-m}\mathcal{E}_{p-m,\bar{\chi}}\left(  0\right)  \mathfrak{B}_{m,\chi
}\left(  0\right)  \alpha^{p-m/2}\beta^{\left(  p-1+m\right)  /2}.
\end{align*}

\end{theorem}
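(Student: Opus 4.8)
The plan is to specialize the transformation formula \eqref{ad1} of Theorem~\ref{teoS1} to a purely imaginary argument and read off the claimed identity. Concretely, I would choose the matrix so that $Tz$ becomes (essentially) $-1/z$: take $a=0$, $b=-1$, $c=1$, $d=0$, which forces $ad-bc=1$ and certainly satisfies $a\equiv d\equiv 0\ (\operatorname{mod}k)$, but $d=0$ is even so Theorem~\ref{teoS1} applies. Then $Tz=-1/z$, $cz+d=z$, and $\bar\chi(b)\chi(c)=\bar\chi(-1)=\chi(-1)$. With this choice \eqref{ad1} reads
\[
G(\bar\chi)\,z^{p-1}H_{1}\!\left(-\tfrac1z,1-p:\chi\right)
=\chi(-1)\Bigl(2^{p}\chi(2)G(\chi)B_{1}(z,1-p:\bar\chi)
-\tfrac{\chi(-1)(2\pi i)^{p}}{2(p!)}\,g_{1}(1,0,z,p,\bar\chi)\Bigr).
\]
From the definition \eqref{g1}, the inner sum in $g_{1}(1,0,z,p,\bar\chi)$ runs over $1\le n\le k$ with $c=1$, $d=0$, so $\mathcal E_{p-m,\bar\chi}(0)$ and $\sum_{n=1}^{k}\bar\chi(n)\mathfrak B_{m}(n/k)=k^{1-m}\mathfrak B_{m,\chi}(0)$ by \eqref{3}; hence $g_{1}(1,0,z,p,\bar\chi)=\sum_{m=1}^{p}\binom{p}{m}k^{1-p}(-z)^{m-1}\mathcal E_{p-m,\bar\chi}(0)\mathfrak B_{m,\chi}(0)$, up to the sign/normalization bookkeeping. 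This already produces the right-hand side of the theorem once I substitute $z$ by the correct power of $\beta$.

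Next I would substitute the series representations: since $H_{1}=(1+e^{\pi is})A_{1}$ and $B_{1}=(1+e^{\pi is})B$, at $s=1-p$ with $p$ odd one has $1+e^{\pi i(1-p)}=1+e^{-\pi i p}=0$... so instead I must keep $H_1$ and $B_1$ as the combinations appearing in \eqref{19}--\eqref{20}, i.e.\ use that $G(\bar\chi)A_{1}(z,1-p:\chi)$ and $G(\chi)B(z,1-p:\bar\chi)$ are the Lambert-type series displayed there, and that the factor $(1+e^{\pi is})$ is already absorbed. Setting $z=i\beta k/\pi$ (equivalently choosing $z$ purely imaginary so that $e^{2\pi i n z/k}=e^{-2n\beta}$ etc.) turns the exponentials $e^{-2\pi i(j+nz)/k}$ into $e^{-2\pi ij/k}e^{2n\alpha}$ after writing $\alpha\beta=(\pi/k)^2$, and likewise for the $\theta_4$-type series. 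The argument $-1/z$ on the left becomes the companion point $i\alpha k/\pi$, producing the second series with the $2n+1$ summation and the $e^{(2n+1)\beta}$ factor (the factor $2^{p}\chi(-2)$ and the $\theta_3$-vs-$\theta_4$ bookkeeping come from the $2^{p}\chi(2)B_{1}$ term together with $\chi(-1)$). Tracking the powers of $z=i\beta k/\pi$: $z^{p-1}$ on the left gives $(i\beta k/\pi)^{p-1}$, which combines with $\alpha\beta=(\pi/k)^2$ to yield the stated $(-\beta)^{(p-1)/2}$ and $\alpha^{(p-1)/2}$ prefactors after clearing the common constant; the $(-z)^{m-1}$ inside $g_1$ gives $(i\beta k/\pi)^{m-1}$, which after the same substitution and multiplying through by the overall $z^{p-1}$ normalization produces exactly $\alpha^{p-m/2}\beta^{(p-1+m)/2}$ together with the factor $i^{p+1-m}$ and the constant $2^{p-2}k/p!$.

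The main obstacle will be the constant- and sign-chasing: reconciling the factors $G(\chi)G(\bar\chi)=\chi(-1)k$ (which is needed to eliminate the Gauss sums and is where $\chi(-1)$ and $k$ enter the final constant), the powers of $2$ coming from $2^{p}\chi(2)$ versus the $2^{p-2}$ in the answer, the branch of $(2\pi i)^{p}=i^{p}(2\pi)^{p}$ for odd $p$, and the conversion $\mathcal E_{p-m,\bar\chi}(0)\mathfrak B_{m,\chi}(0)$ appearing in the right order. I would also need to justify that, although Theorem~\ref{teoS1} is stated for $Tz$ with $z\in\mathbb H$, the specialization to the degenerate-looking matrix $\begin{pmatrix}0&-1\\1&0\end{pmatrix}$ is legitimate — this is fine since $d=0$ is even, $a\equiv d\equiv 0\ (\operatorname{mod}k)$ trivially, and $-1/z\in\mathbb H$ for $z\in\mathbb H$. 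Once the dictionary between $z=i\beta k/\pi$, $-1/z=i\alpha k/\pi$ and the two Lambert series \eqref{19}--\eqref{20} is set up, the identity is just \eqref{ad1} rewritten, so the only real work is bookkeeping the multiplicative constants.
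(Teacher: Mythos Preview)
Your approach is correct and essentially identical to the paper's: specialize \eqref{ad1} with $a=d=0$, $b=-1$, $c=1$, compute $g_{1}(1,0,z,p,\bar\chi)$ via \eqref{3}, substitute $z=\pi i/(k\alpha)$ (which is exactly your $z=i\beta k/\pi$ once $\alpha\beta=(\pi/k)^{2}$), invoke the Lambert-series forms \eqref{19}--\eqref{20}, and normalize by $\alpha^{(p-1)/2}$. One small correction to your bookkeeping: for odd $p$ the factor $1+e^{\pi i(1-p)}$ equals $2$, not $0$, since $1-p$ is even; thus $H_{1}=2A_{1}$ and $B_{1}=2B$ at $s=1-p$, and this common factor of $2$ simply cancels, which is why the paper passes directly from $H_{1},B_{1}$ to $A_{1},B$.
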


\begin{proof}
We put $a=d=0,$ $b=-1$ and $c=1$ in (\ref{ad1}) to obtain
\begin{align*}
z^{p-1}G\left(  \bar{\chi}\right)  A_{1}\left(  -\frac{1}{z},1-p:\chi\right)
&  =2^{p}\chi\left(  -2\right)  G\left(  \chi\right)  B\left(  z,1-p:\bar
{\chi}\right) \\
&  -\frac{k}{4}\frac{\left(  2\pi i/k\right)  ^{p}}{p!}\sum_{m=1}^{p}\binom
{p}{m}\mathcal{E}_{p-m,\bar{\chi}}\left(  0\right)  \mathfrak{B}_{m,\chi
}\left(  0\right)  \left(  -z\right)  ^{m-1}.
\end{align*}
Then, the proof follows by setting $z=\pi i/k\alpha$ and using (\ref{19}),
(\ref{20}) and that $\alpha\beta=\left(  \pi/k\right)  ^{2},$ and then
multiplying both sides by $\alpha^{\left(  p-1\right)  /2}.$
\end{proof}

\begin{corollary}
Let $p\geq1$ be odd and let $\chi$ be the primitive character of modulus $3$
defined by%
\begin{equation}
\chi\left(  n\right)  =\left\{
\begin{array}
[c]{cc}%
1, & n\equiv1\left(  \operatorname{mod}3\right)  ,\\
-1, & n\equiv2\left(  \operatorname{mod}3\right)  ,\\
0, & n\equiv0\left(  \operatorname{mod}3\right)  .
\end{array}
\right.  \label{16}%
\end{equation}
Then,
\begin{align}
&  \sum\limits_{n=1}^{\infty}\left(  -1\right)  ^{n\left(  p+1\right)
/2}\frac{\chi\left(  n\right)  }{n^{p}\left(  2\cosh\left(  n\pi/3\right)
-\left(  -1\right)  ^{n}\right)  }\nonumber\\
&  =\frac{\left(  -1\right)  ^{\left(  p+1\right)  /2}}{4\sqrt{3}}\frac{3}%
{p!}\left(  \frac{\pi}{3}\right)  ^{p}\sum_{m=1}^{p}\binom{p}{m}\left(
i\right)  ^{p-m}\mathcal{E}_{p-m,\chi}\left(  0\right)  \mathfrak{B}_{m,\chi
}\left(  0\right)  . \label{21}%
\end{align}
In particular,
\[
\sum\limits_{n=1}^{\infty}\frac{\left(  -1\right)  ^{n}\chi\left(  n\right)
}{n\left(  2\cosh\left(  n\pi/3\right)  -\left(  -1\right)  ^{n}\right)
}=-\frac{\pi}{4\sqrt{3}}\mathcal{E}_{0,\chi}\left(  0\right)  \mathfrak{B}%
_{1,\chi}\left(  0\right)  \mathcal{=-}\frac{\pi}{6\sqrt{3}}%
\]
and\textbf{\ }%
\[
\sum\limits_{n=1}^{\infty}\frac{\chi\left(  n\right)  }{n^{3}\left(
2\cosh\left(  n\pi/3\right)  -\left(  -1\right)  ^{n}\right)  }=\frac{\left(
\pi/3\right)  ^{3}}{8\sqrt{3}}\left(  \mathcal{E}_{0,\chi}\left(  0\right)
\mathfrak{B}_{3,\chi}\left(  0\right)  -3\mathcal{E}_{2,\chi}\left(  0\right)
\mathfrak{B}_{1,\chi}\left(  0\right)  \right)
\]

\end{corollary}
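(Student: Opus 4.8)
The plan is to specialize Theorem~\ref{seri1} to modulus $k=3$, taking $\chi$ to be the (necessarily real) primitive character of (\ref{16}), so that $\bar{\chi}=\chi$, and choosing the symmetric pair $\alpha=\beta=\pi/3$, which is admissible because then $\alpha\beta=(\pi/3)^{2}$ with $\alpha,\beta>0$. With $k=3$ the inner $j$-sums in Theorem~\ref{seri1} have only the terms $j=1$ and $j=2$, with $\chi(1)=1$, $\chi(2)=-1$. First I would close these two-term sums: writing $\omega=e^{2\pi i/3}$ and using $e^{-4\pi i/3}=\omega$, $\omega+\bar{\omega}=-1$ and $\omega-\bar{\omega}=i\sqrt{3}$, a short computation gives, for real $x>0$,
\[
\sum_{j=1}^{2}\chi(j)\frac{1}{e^{x-2\pi ij/3}+1}=\frac{i\sqrt{3}}{2\cosh x-1},\qquad\sum_{j=1}^{2}\chi(j)\frac{1}{e^{x-2\pi ij/3}-1}=\frac{i\sqrt{3}}{2\cosh x+1}.
\]
Applying the first identity with $x=2n\alpha=2n\pi/3$ turns the first term of Theorem~\ref{seri1} into $(-\pi/3)^{(p-1)/2}i\sqrt{3}\sum_{n\geq1}\chi(n)n^{-p}(2\cosh(2n\pi/3)-1)^{-1}$, and applying the second with $x=(2n+1)\beta=(2n+1)\pi/3$ turns the second term into $2^{p}(\pi/3)^{(p-1)/2}\chi(-2)\,i\sqrt{3}\sum_{n\geq0}\chi(2n+1)(2n+1)^{-p}(2\cosh((2n+1)\pi/3)+1)^{-1}$, where $\chi(-2)=\chi(-1)\chi(2)=(-1)(-1)=1$.

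Next I would merge these two series into the single series of the statement. In the first one substitute $m=2n$: then $m$ runs over the positive even integers prime to $3$, and since $\chi$ is completely multiplicative with $\chi(2)=-1$ we have $\chi(n)=\chi(m/2)=-\chi(m)$, so the series becomes $-2^{p}\sum_{m\ \mathrm{even}}\chi(m)m^{-p}(2\cosh(m\pi/3)-1)^{-1}$, while the prefactor $-(-\pi/3)^{(p-1)/2}$ equals $(-1)^{(p+1)/2}(\pi/3)^{(p-1)/2}$. In the second one substitute $m=2n+1$: then $m$ runs over the positive odd integers prime to $3$. Together these $m$ are exactly the positive integers prime to $3$, each taken once. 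Since $p$ is odd, $(-1)^{n(p+1)/2}=1$ when $n$ is even and $(-1)^{n(p+1)/2}=(-1)^{(p+1)/2}$ when $n$ is odd, while $2\cosh(n\pi/3)-(-1)^{n}$ equals $2\cosh(n\pi/3)-1$ resp.\ $2\cosh(n\pi/3)+1$. Hence the left side of Theorem~\ref{seri1} equals $2^{p}(\pi/3)^{(p-1)/2}i\sqrt{3}$ times $(-1)^{(p+1)/2}$ times the series appearing in (\ref{21}).

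It then remains to simplify the right side of Theorem~\ref{seri1} for $k=3$, $\alpha=\beta=\pi/3$; here the exponent $p-m/2+(p-1+m)/2=(3p-1)/2$ is independent of $m$, so that right side is $2^{p-2}\tfrac{3}{p!}(\pi/3)^{(3p-1)/2}\sum_{m=1}^{p}\binom{p}{m}i^{p+1-m}\mathcal{E}_{p-m,\chi}(0)\mathfrak{B}_{m,\chi}(0)$. Dividing by $2^{p}(\pi/3)^{(p-1)/2}i\sqrt{3}$ and multiplying by $(-1)^{(p+1)/2}$ collapses $2^{p-2}/2^{p}=1/4$, $(\pi/3)^{(3p-1)/2-(p-1)/2}=(\pi/3)^{p}$ and $i^{p+1-m}/i=i^{p-m}$, giving precisely (\ref{21}). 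For the two displayed special cases I would set $p=1$ and $p=3$ in (\ref{21}) and evaluate the character values at $0$ from (\ref{3}) and (\ref{13a}): one gets $\mathfrak{B}_{1,\chi}(0)=\chi(1)B_{1}(1/3)+\chi(2)B_{1}(2/3)=-1/3$ and $\mathcal{E}_{0,\chi}(0)=-\chi(1)+\chi(2)=-2$, so the $p=1$ sum is $-\tfrac{\pi}{4\sqrt{3}}\mathcal{E}_{0,\chi}(0)\mathfrak{B}_{1,\chi}(0)=-\tfrac{\pi}{6\sqrt{3}}$; and for $p=3$ the $m=2$ term drops out because $\chi(-1)=-1$ forces $\mathfrak{B}_{2,\chi}$ to be odd by (\ref{b}), whence $\mathfrak{B}_{2,\chi}(0)=0$, leaving only the $m=1$ and $m=3$ contributions.

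The step I expect to be the main obstacle is the sign bookkeeping: matching $(-1)^{n(p+1)/2}$ over the even and odd values of $n$ with the factor $(-1)^{(p-1)/2}$ produced by $(-\beta)^{(p-1)/2}$ and with the extra $\chi(2)=-1$ coming from the reindexing $m=2n$, while also verifying that the substitutions $m=2n$ and $m=2n+1$ partition the positive integers prime to $3$ with no overlap.
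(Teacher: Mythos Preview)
Your proposal is correct and follows essentially the same route as the paper: specialize Theorem~\ref{seri1} to $k=3$ with $\alpha=\beta=\pi/3$, collapse the two-term $j$-sums to the $\cosh$ expressions, and merge the even/odd pieces into the single series of (\ref{21}). The paper compresses the middle steps into ``some simplification gives,'' whereas you spell out the $j$-sum identities, the reindexing $m=2n$, $m=2n+1$, and the sign bookkeeping explicitly; nothing in your argument deviates from or goes beyond the paper's approach.
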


\begin{proof}
Setting $\alpha=\beta=\pi/3$ in Theorem \ref{seri1} we have
\begin{align*}
&  \left(  -1\right)  ^{\left(  p-1\right)  /2}\sum\limits_{n=1}^{\infty}%
\frac{\chi\left(  n\right)  }{n^{p}}\left(  \frac{1}{e^{2n\alpha-2\pi i/3}%
+1}-\frac{1}{e^{2n\alpha-4\pi i/3}+1}\right) \\
&  \quad+2^{p}\sum\limits_{n=0}^{\infty}\frac{\chi\left(  2n+1\right)
}{\left(  2n+1\right)  ^{p}}\left(  \frac{1}{e^{\left(  2n+1\right)
\alpha-2\pi i/3}-1}-\frac{1}{e^{\left(  2n+1\right)  \alpha-4\pi i/3}%
-1}\right) \\
&  =2^{p-2}\frac{3}{p!}\left(  \frac{\pi}{3}\right)  ^{p}\sum_{m=1}^{p}%
\binom{p}{m}\left(  i\right)  ^{p+1-m}\mathcal{E}_{p-m,\chi}\left(  0\right)
\mathfrak{B}_{m,\chi}\left(  0\right)  .
\end{align*}
Some simplification gives
\begin{align*}
&  \sum\limits_{n=1}^{\infty}\frac{\left(  -1\right)  ^{\left(  p+1\right)
/2}\chi\left(  2n\right)  }{\left(  2n\right)  ^{p}\left(  2\cosh
2n\alpha-1\right)  }+\sum\limits_{n=0}^{\infty}\frac{\chi\left(  2n+1\right)
}{\left(  2n+1\right)  ^{p}\left(  2\cosh\left(  2n+1\right)  \alpha+1\right)
}\\
&  =\frac{1}{4\sqrt{3}}\frac{3}{p!}\left(  \frac{\pi}{3}\right)  ^{p}%
\sum_{m=1}^{p}\binom{p}{m}\left(  i\right)  ^{p-m}\mathcal{E}_{p-m,\bar{\chi}%
}\left(  0\right)  \mathfrak{B}_{m,\chi}\left(  0\right)  ,
\end{align*}
which is equivalent to (\ref{21}).
\end{proof}

Observe that for $\alpha=\beta=\pi/k$ and real-valued primitive character
$\chi,$ Theorem \ref{seri1} can be composed as%
\begin{align*}
&  \chi\left(  2\right)  \sum\limits_{j=1}^{k-1}\chi\left(  j\right)
\sum\limits_{n=1}^{\infty}\frac{\left(  -1\right)  ^{n\delta}\chi\left(
n\right)  }{n^{p}\left(  e^{n\alpha-2\pi ij/k}+\left(  -1\right)  ^{n}\right)
}\\
&  =\frac{\left(  -1\right)  ^{\left(  p-1\right)  /2}}{4}\left(  \frac{\pi
}{k}\right)  ^{p}\frac{k}{p!}\sum_{m=1}^{p}\binom{p}{m}\left(  i\right)
^{p+1-m}\mathcal{E}_{p-m,\bar{\chi}}\left(  0\right)  \mathfrak{B}_{m,\chi
}\left(  0\right)  ,
\end{align*}
where $\delta=\left\{
\begin{array}
[c]{cc}%
\left(  p-1\right)  /2, & \text{if }\chi\left(  -1\right)  =1,\\
\left(  p+1\right)  /2, & \text{if }\chi\left(  -1\right)  =-1.
\end{array}
\right.  $

\begin{theorem}
Let $\alpha\beta=\left(  \pi/3\right)  ^{2}$ with $\alpha,\beta>0$ and let
$\chi$ be the primitive character of modulus $3$ given by (\ref{16}). Then
\[
\sum\limits_{n=1}^{\infty}\frac{\left(  -1\right)  ^{n}\chi\left(  n\right)
}{n\left(  2\cosh2n\alpha-\left(  -1\right)  ^{n}\right)  }+\sum
\limits_{n=1}^{\infty}\frac{\left(  -1\right)  ^{n}\chi\left(  n\right)
}{n\left(  2\cosh2n\beta-\left(  -1\right)  ^{n}\right)  }=-\frac{\pi}%
{3\sqrt{3}}.
\]
In particular,
\[
\sum\limits_{n=1}^{\infty}\frac{\left(  -1\right)  ^{n}\chi\left(  n\right)
}{n\left(  2\cosh\frac{2n\pi}{3}-\left(  -1\right)  ^{n}\right)  }=-\frac{\pi
}{6\sqrt{3}}.
\]

\end{theorem}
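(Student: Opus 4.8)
The plan is to rerun the argument of Theorem~\ref{seri1}, but now using the transformation formula that sends $\log\theta_{2}$ back to $\log\theta_{2}$ together with an evaluation point having half-integer real part, so that the denominator $2\cosh(\cdot)-(-1)^{n}$ appears on both sides. First I would identify the left-hand side in closed form: taking $z=-\tfrac32+iy$ in (\ref{19}), using $e^{-2\pi i n(-3/2)/3}=(-1)^{n}$ and the Gauss-sum value $G(\chi)=i\sqrt3$ for $k=3$, the inner $j$-sum over $\{1,2\}$ (with $\bar\chi(1)=1$, $\bar\chi(2)=-1$) collapses to the factor $i\sqrt3/\bigl(2\cosh(2\pi ny/3)-(-1)^{n}\bigr)$; hence, writing $\Phi(\gamma)$ for the series in the statement,
\[
\Phi(\gamma)=-\,A_{1}\!\Bigl(-\tfrac32+\tfrac{3\gamma i}{\pi},\,0:\chi\Bigr),
\]
and the hypothesis $\alpha\beta=(\pi/3)^{2}$ is precisely the assertion that $z_{\alpha}=-\tfrac32+\tfrac{3\alpha i}{\pi}$ and $z_{\beta}=-\tfrac32+\tfrac{3\beta i}{\pi}$ satisfy $\bigl(z_{\alpha}+\tfrac32\bigr)\bigl(z_{\beta}+\tfrac32\bigr)=-1$, i.e.\ $z_{\beta}$ is the image of $z_{\alpha}$ under the Fricke-type involution $z\mapsto -\tfrac32-1/(z+\tfrac32)$.

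Next I would feed this into (\ref{ad2}) of Theorem~\ref{teoS2} (the case valid for even $c$, in which $H_{1}(Tz)$ is expressed through $H_{1}(z)$ rather than $B_{1}(z)$; if necessary this may instead be obtained by combining (\ref{ad1}) and (\ref{B1a}) in the manner of the proof of Theorem~\ref{teoRP1}), taken for the modular substitution realising the above involution. Since $A_{1}(z,s:\chi)=2\chi(2)A(2z,s:\chi)-A(z,s:\chi)$, the half-integer shift and the factors of $2$ are peeled off exactly as in the proofs of Theorems~\ref{teoS1}--\ref{teoS2}: one passes to the companion matrices $S$, $V$, which do lie in $SL_{2}(\mathbb Z)$, applies Berndt's Theorem~\ref{tk1}, and uses the residue formula (\ref{fp}) at $s=1-p=0$. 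For $p=1$ only the $m=1$ term of $g_{2}$ survives, so the polynomial part collapses to a constant, which by (\ref{13a}), (\ref{3}) and the identity (\ref{be}) is a fixed multiple of $\mathcal{E}_{0,\bar\chi}(0)\,\mathfrak{B}_{1,\chi}(0)$, just as in Theorem~\ref{seri1}. Applying (\ref{19}) to $H_{1}(Tz_{\alpha},0:\chi)$ and to $H_{1}(z_{\alpha},0:\bar\chi)$ then produces $\Phi(\alpha)$ and $\Phi(\beta)$ on the left (the relation $\alpha\beta=(\pi/3)^{2}$ turning the two imaginary parts into the reciprocal pair), and that constant on the right.

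To finish, I would compute the constant for $k=3$: from (\ref{13a}), $\mathcal{E}_{0,\chi}(0)=\sum_{j=0}^{2}(-1)^{j}\bar\chi(j)=-\bar\chi(1)+\bar\chi(2)=-2$; from (\ref{3}), $\mathfrak{B}_{1,\chi}(0)=B_{1,\chi}=\bar\chi(1)\mathfrak{B}_{1}(1/3)+\bar\chi(2)\mathfrak{B}_{1}(2/3)=-\tfrac16-\tfrac16=-\tfrac13$, so $\mathcal{E}_{0,\bar\chi}(0)\,\mathfrak{B}_{1,\chi}(0)=\tfrac23$ and the constant works out to $-\pi/(3\sqrt3)$. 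The ``in particular'' is then the case $\alpha=\beta=\pi/3$, in agreement with the corollary following Theorem~\ref{seri1}.

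The step I expect to be the real obstacle is pinning down the integer matrix for the $\log\theta_{2}\to\log\theta_{2}$ substitution: the naive representing matrix of $z\mapsto -\tfrac32-1/(z+\tfrac32)$ has determinant $16$, not $1$, so the half-integer shift and the two factors of $2$ must be split off via $A_{1}=2\chi(2)A(2\,\cdot\,)-A(\cdot)$ and handled by genuine $SL_{2}(\mathbb Z)$ matrices acting on $2z$ and on $z/2$; one then has to check that the $2^{1-s}|_{s=0}$, $\chi(2)$ and Gauss-sum prefactors recombine so that the left-hand side is exactly $\Phi(\alpha)+\Phi(\beta)$ with coefficient $1$ on each summand. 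Once the matrix and its companions are fixed, the rest is the routine residue-and-character bookkeeping already carried out for Theorems~\ref{teoS1}, \ref{teoS2} and \ref{seri1}.
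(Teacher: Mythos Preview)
Your overall plan coincides with the paper's: specialise (\ref{ad2}) of Theorem~\ref{teoS2} at $p=1$ to a concrete modular substitution and an evaluation point with $\operatorname{Re}z\equiv\tfrac32\pmod3$, then read off $\Phi(\alpha)$ and $\Phi(\beta)$ from (\ref{19}) on the two sides. The difference is that the ``real obstacle'' you flag never actually arises in the paper's argument.

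The paper does \emph{not} attempt to realise your Fricke-type involution $z\mapsto-\tfrac32-1/(z+\tfrac32)$ as an $SL_2(\mathbb Z)$ map. It simply takes $a=d=3$, $b=4$, $c=2$, so $T=\bigl(\begin{smallmatrix}3&4\\2&3\end{smallmatrix}\bigr)\in SL_2(\mathbb Z)$ already has $c$ even and $a\equiv d\equiv0\pmod3$; with $2z+3$ purely imaginary one gets $\operatorname{Re}z=-\tfrac32$ and $\operatorname{Re}Tz=\tfrac32$, and since $A_1(\,\cdot\,,0:\chi)$ has period $k=3$ in $z$, both $z$ and $Tz$ feed directly into (\ref{19}) to produce $\Phi$-series. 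No splitting $A_1=2\chi(2)A(2\,\cdot\,)-A(\,\cdot\,)$, no companion matrices acting on $2z$ or $z/2$, and no recombination of $2^{1-s}$, $\chi(2)$, Gauss-sum factors are needed; (\ref{ad2}) applies as written. So the determinant-$16$ difficulty dissolves once you allow any matrix meeting the hypotheses of Theorem~\ref{teoS2} that carries a half-integer-real-part point to another, rather than insisting on the particular involution.

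The constant is also handled differently. For this matrix $g_{2}(c,d,z,1,\chi)$ does not collapse a priori to the product $\mathcal{E}_{0,\bar\chi}(0)\,\mathfrak{B}_{1,\chi}(0)$ (that product would appear for $c=1$, which the even-$c$ hypothesis excludes); instead the paper evaluates the finite sum
\[
g_{2}(2,3,z,1,\chi)=2\sum_{j=1}^{5}(-1)^{j}\chi(j)\,\mathfrak{B}_{1,\chi}\!\left(\tfrac{3j}{2}\right)\mathfrak{B}_{1}\!\left(\tfrac{j}{6}\right)
\]
directly from (\ref{3}) and the explicit values of $\mathfrak{B}_{1}$, obtaining $2/3$ and hence $-\pi/(3\sqrt3)$. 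Your numerical endpoint is the same, but the route through (\ref{be}) and $\mathcal{E}_{0,\bar\chi}(0)\mathfrak{B}_{1,\chi}(0)$ is not the one taken.
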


\begin{proof}
We apply (\ref{ad2}) with $a=d=3,$ $b=4$ and $c=2.$ Setting $2z+3=\pi
i/3\alpha$ we have $Tz=3\left(  1-\alpha/\pi i\right)  /2$ and $z=-3\left(
1+\beta/\pi i\right)  /2,$ where $\alpha\beta=\left(  \pi/3\right)  ^{2}.$
Straightforward calculation gives
\begin{align*}
&  \sum\limits_{n=1}^{\infty}\frac{\chi\left(  n\right)  }{n\left(
2\cosh\left(  2n\alpha-n\pi i\right)  -1\right)  }+\sum\limits_{n=1}^{\infty
}\frac{\chi\left(  n\right)  }{n\left(  2\cosh\left(  2n\beta+n\pi i\right)
-1\right)  }\\
&  =-\frac{\pi}{\sqrt{3}}\sum\limits_{j=1}^{5}\left(  -1\right)  ^{j}%
\chi\left(  j\right)  \mathfrak{B}_{1,\chi}\left(  \frac{3j}{2}\right)
\mathfrak{B}_{1}\left(  \frac{j}{6}\right)  .
\end{align*}
Using (\ref{3}) and the fact $\mathfrak{B}_{1}\left(  x+1\right)
=\mathfrak{B}_{1}\left(  x\right)  =x-1/2$ when $0<x<1,$ we find that
\[
\sum\limits_{j=1}^{5}\left(  -1\right)  ^{j}\chi\left(  j\right)
\mathfrak{B}_{1,\chi}\left(  \frac{3j}{2}\right)  \mathfrak{B}_{1}\left(
\frac{j}{6}\right)  =\frac{1}{3}%
\]
which completes the proof.
\end{proof}


\begin{thebibliography}{99}                                                                                               %


\bibitem {a2}T. M. Apostol, \textit{Introduction to Analytic Number Theory},
Undergraduate Texts in Mathematics, Springer-Verlag, New York 1976.

\bibitem {a3}T. M. Apostol and T. H. Vu, Elementary proofs of Berndt's
reciprocity laws, \textit{Pacific J. Math}. \textbf{98} (1982) 17--23.

\bibitem {b2}B. C. Berndt, Character transformation formulae similar to those
for the Dedekind Eta-function, \textit{in `Analytic Number Theory', Proc. Sym.
Pure Math. XXIV}, Amer. Math. Soc., Providence, R. I., (1973) 9--30.

\bibitem {b5}B. C. Berndt, Character analogues of Poisson and Euler--Maclaurin
summation formulas with applications, \textit{J. Number Theory} \textbf{7}
(1975) 413--445.

\bibitem {b4}B. C. Berndt, Modular transformations and generalizations of
several formulae of Ramanujan, \textit{Rocky Mt. J. Math.} \textbf{7}(1977) 147--190.

\bibitem {b6}B. C. Berndt, Analytic Eisenstein series, theta functions and
series relations in the spirit of Ramanujan, \textit{J. Reine Angew}. Math.
\textbf{303/304} (1978) 332--365.

\bibitem {bg}B. C. Berndt and L. A. Goldberg, Analytic properties of
arithmetic sums arising in the theory of the classical theta functions,
\textit{Siam J. Math. Anal.} \textbf{15} (1) (1984) 143--150.

\bibitem {c1}M. Can, Some arithmetic on the Hardy sums $s_{2}\left(
h,k\right)  $ and $s_{3}\left(  h,k\right)  $, \textit{Acta Math. Sin. Engl.
Ser.} \textbf{20} (2) (2004) 193--200.

\bibitem {cck1}M. Can, M. Cenkci and V. Kurt, Generalized Hardy--Berndt sums,
\textit{Proc. Jangjeon Math. Soc}. \textbf{9} (1) (2006) 19--38.

\bibitem {ck}M. Can and V. Kurt, Character analogues of certain Hardy--Berndt
sums, \textit{Int. J. Number Theory} \textbf{10} (2014) 737--762.

\bibitem {cd}M. Can and M. C. Da\u{g}l\i, Character analogue of the Boole
summation formula with applications, \textit{Turk. J. Math. (accepted)}.

\bibitem {cl}L. Carlitz, Eulerian numbers and polynomials, \textit{Math. Mag.}
\textbf{32} (5) (1959) 247--260.

\bibitem {cck}M. Cenkci, M. Can and V. Kurt, Degenerate and character Dedekind
sums, \textit{J. Number Theory} \textbf{124} (2007) 346--363.

\bibitem {dc}M. C. Da\u{g}l\i\ and M. Can, A new generalization of
Hardy--Berndt sums, \textit{Proc. Indian Acad. Sci. (Math. Sci.)} \textbf{123}
(2) (2013) 177--192.

\bibitem {dc1}M. C. Da\u{g}l\i\ and M. Can, On reciprocity formulas for
Apostol's Dedekind sums and their analogues, \textit{J. Integer Seq}.
\textbf{17} (2014) Article 14.5.4.

\bibitem {dc2}M. C. Da\u{g}l\i\ and M. Can, Periodic analogues of Dedekind
sums and transformation formulas of Eisenstein series, \textit{Ramanujan J.
}doi: 10.1007/s11139-016-9808-y.

\bibitem {g}L. A. Goldberg, Transformations of theta-functions and analogues
of Dedekind sums, thesis, University of Illinois, Urbana, 1981.

\bibitem {j}C. Jordan, \textit{Calculus of Finite Differences, }Chelsea, New
York 1965.

\bibitem {lew}J. Lewittes, Analytic continuation of the Eisenstein series,
\textit{Trans. Amer. Math. Soc.} \textbf{171} (1972) 469--490.

\bibitem {lz}H. Liu and W. Zhang, Generalized Cochrane sums and
Cochrane--Hardy sums, \textit{J. Number Theory} \textbf{122} (2) (2007) 415--428.

\bibitem {lg}H. Liu and J. Gao, Generalized Knopp identities for homogeneous
Hardy sums and Cochrane-Hardy sums, \textit{Czech. Math. J.} \textbf{62}
(2012) 1147--1159.

\bibitem {m1}J. L. Meyer, Analogues of Dedekind sums, thesis, University of
Illinois, Urbana, 1997.

\bibitem {m2}J. L. Meyer Properties of certain integer-valued analogues of
Dedekind sums, \textit{Acta Arith.} \textbf{LXXXII} (3) (1997) 229--242.

\bibitem {m4}J. L. Meyer, A reciprocity congruence for an analogue of the
Dedekind sum and quadratic reciprocity, \textit{J. Th\'{e}or. Nombres
Bordeaux} \textbf{12} (1) (2000) 93--101.

\bibitem {m3}J. L. Meyer, Character analogues of Dedekind sums and
transformations of analytic Eisenstein series, \textit{Pacific J. Math.}
\textbf{194} (1) (2000) 137--164.

\bibitem {pz}W. Peng and T. Zhang, Some identities involving certain Hardy sum
and Kloosterman sum, \textit{J. Number Theory} \textbf{165} (2016) 355--362.

\bibitem {pt}M. R. Pettet and R. Sitaramachandrarao, Three-term relations for
Hardy sums, \textit{J. Number Theory} \textbf{25} (3) (1987) 328--339.

\bibitem {sy}Y. Simsek, Relations between theta functions, Hardy sums,
Eisenstein and Lambert series in the transformation formulae of $\log
\eta_{g,h}(z)$, \textit{J. Number Theory} \textbf{99} (2003) 338--360.

\bibitem {s}R. Sitaramachandrarao, Dedekind and Hardy sums, \textit{Acta
Arith.} \textbf{XLIII} (1987) 325--340.

\bibitem {wh}W. Wang and D. Han, An identity involving certain Hardy sums and
Ramanujan's sum, \textit{Adv. Difference Equ.} (2013) 2013:261.

\bibitem {zw}Z. Xu and W. Zhang, The mean value of Hardy sums over short
intervals, \textit{Proc. R. Soc. Edinburgh} \textbf{137} (2007) 885--894.

\bibitem {zz}H. Zhang and W. Zhang, On the identity involving certain Hardy
sums and Kloosterman sums, \textit{J. Inequal. Appl.} (2014) 2014:52.
\end{thebibliography}
\end{document}